\definecolor{darkgreen}{rgb}{0,0.5,0}
\definecolor{darkred}{rgb}{0.7,0,0}
\definecolor{darkblue}{rgb}{0,0,0.7}
\newcommand{\margin}[1]{\textcolor{magenta}{*}\marginpar[\textcolor{magenta} {  \raggedleft  \footnotesize  #1 }  ]{ \textcolor{magenta} { \raggedright  \footnotesize  #1 }  }}
\newcommand{\note}[2]{ \hskip 2cm  \textcolor{blue}{\large \bf #1 :}   \vline\,\vline \hskip 0.5 cm \parbox{10 cm}{ #2}  }
\renewcommand{\margin}[1]{}
\renewcommand{\note}[2]{}
\theoremstyle{plain}
\newtheorem{theorem}{Theorem}[section]
\newtheorem{corollary}[theorem]{Corollary}
\newtheorem{lemma}[theorem]{Lemma}
\newtheorem{proposition}[theorem]{Proposition}
\newtheorem{definition}[theorem]{Definition}
\newtheorem{question}[theorem]{Question}
\theoremstyle{remark}
\newtheorem{remark}[theorem]{Remark}
\numberwithin{equation}{section}
\numberwithin{table}{section}
\newcommand{\N}{\mathbb{N}}
\newcommand{\R}{\mathbb{R}}
\newcommand{\C}{\mathbb{C}}
\newcommand{\Z}{\mathbb{Z}}
\newcommand{\E}{\mathbb{E}}
\newcommand{\T}{\mathbb{T}}
\renewcommand{\C}{\mathbb{C}}
\renewcommand{\Z}{\mathbb{Z}}
\renewcommand{\N}{\mathbb{N}}
\renewcommand{\S}{\mathbb{S}}
\def\H{\mathbb{H}}
\def\T{\mathbb{T}}
\newcommand{\QED}{\hfill \ensuremath{\Box}}
\let\qed=\QED
\def\calC{\mathcal{C}}
\def\calH{\mathcal{H}}
\def\calQ{\mathcal{Q}}
\def\calT{\mathcal{T}}
\def\SLE{\mathrm{SLE}}
\def\Var#1{\mathrm{Var}\bigl[ #1\bigr]}
\def\dist{\mathrm{dist}}
\def\ceil#1{\lceil{#1}\rceil}
\def\floor#1{\lfloor{#1}\rfloor}
\def\P{\mathbb{P}} 
\def\E{\mathbb{E}} 
\def\md{\mid}
\def \eps {\epsilon}
\def\Bb#1#2{{\def\md{\bigm| }#1\bigl[#2\bigr]}}
\def\Pb{\Bb\P}
\def\Eb{\Bb\E}
\def\FK#1#2#3{{\def\md{\bigm| } \P_{#1}^{\,#2}  \bigl[  #3 \bigr]}}
\def \p {{\partial}}
\def\closure{\overline}
\def\scal#1{\langle#1\rangle}
\renewcommand{\subset}{\subseteq}
\renewcommand{\hat}{\widehat}
\def\<#1{\langle #1\rangle}
\newcommand{\red}[1]{\textcolor{red}{#1}}
\newcommand{\purple}[1]{\textcolor{purple}{#1}}
\def\nn{\nonumber}
\def\bi{\begin{itemize}}  
\def\ei{\end{itemize}}
\def\bnum{\begin{enumerate}} 
\def\enum{\end{enumerate}}
\def\ni{\noindent}
\def\bf{\bfseries}
\colorlet{symbols}{blue!90!black}
\colorlet{testcolor}{green!60!black}
\def\1{\mathbf{{1}}}
\def\Wick#1{\,\colon\! #1 \colon}
\def\s{\mathfrak{s}}
\def\g2{\frac {\gamma^2} 2}
\newcommand{\Cs}{\mathcal{C}_{\mathfrak{s}}}
\def\GFF{\mathrm{GFF}}
\title{Dynamical Liouville}
\author{Christophe Garban}
\address[Christophe Garban]
{Univ Lyon, Universit\'e Claude Bernard Lyon 1, CNRS UMR 5208, Institut Camille Jordan, 69622 Villeurbanne, France}
\email{garban@math.univ-lyon1.fr}
\keywords{Liouville quantum gravity, Liouville measure, Singular stochastic PDEs, stochastic quantisation}
\begin{document}


\begin{abstract}
The aim of this paper is to analyze an SPDE which arises naturally in the context of Liouville quantum gravity.
This SPDE shares some common features with the so-called Sine-Gordon equation and is built to preserve the {\em Liouville measure} which has been constructed recently on the two-dimensional sphere $\S^2$ and the torus $\T^2$ in the work by David-Kupiainen-Rhodes-Vargas \cite{sphere, torus}.
The SPDE we shall focus on has the following (simplified) form:
\[
\p_t X = \frac 1 {4\pi} \Delta X -  e^{\gamma X}  + \xi\,,
\]
where $\xi$ is a space-time white noise on $\R_+ \times \S^2$ or $\R_+ \times \T^2$. The main aspect which distinguishes this singular stochastic SPDE with well-known SPDEs studied recently (KPZ, dynamical $\Phi^4_3$, dynamical Sine-Gordon, generalized KPZ, etc.) is the presence of intermittency. One way of picturing this effect is that a naive rescaling argument suggests the above SPDE is sub-critical for all $\gamma>0$, while we don't expect solutions to exist when $\gamma > 2$. 
In this work, we initiate the study of this intermittent SPDE by analyzing what one might call the ``classical'' or ``Da Prato-Debussche'' phase which corresponds here to $\gamma\in[0,\gamma_{dPD}=2\sqrt{2} -\sqrt{6})$. 
By exploiting the positivity of the non-linearity $e^{\gamma X}$, we can push this classical threshold further and obtain this way a weaker notion of solution when $\gamma\in[\gamma_{dPD}, \gamma_{\mathrm{pos}}=2\sqrt{2} - 2)$.  Our proof requires an analysis of the Besov regularity of natural space/time Gaussian multiplicative chaos  (GMC) measures. 
Regularity Structures of arbitrary high degree should potentially give strong solutions all the way to the same threshold $\gamma_{\mathrm{pos}}$ and should not push this threshold further unless the notion of regularity is suitably adapted to the present intermittent situation.
Of independent interest, we prove along the way (using techniques from \cite{sine}) a stronger convergence result for approximate GMC measures $\mu_\eps \to \mu$ which holds in Besov spaces.
\end{abstract}
\maketitle

\setcounter{tocdepth}{1}
\tableofcontents

\section{Introduction} 

The SPDE we shall focus on is motivated by Liouville Quantum Gravity. We start with some background on Liouville Quantum Gravity before stating our main results. 

\subsection{Liouville measure and uniformisation of planar maps}

$ $

Duplantier and Sheffield conjectured in their seminal paper \cite{DS} that the conformal embedding of {\em planar maps} with the topology of the sphere $\S^2$ or the torus $\T^2$ is described asymptotically (as the number of faces tends to infinity) by \textbf{multiplicative chaos measures} of the form 
\[
e^{\gamma X} dx dy
\]
where $X$ is a Gaussian Free Field on the sphere $\S^2$ (or $\T^2$) (with, say, average 0) and $\gamma$ depends on the universality class of the chosen planar map model. One has the useful correspondance $\gamma \equiv \sqrt{\kappa}$ if the planar map is naturally associated with $\SLE_\kappa$ processes. See \cite{DS, bourbaki, LecturesLQG, Oxford4, Cargese, VincentDOZZ} for background on this embedding conjecture and more generally on Liouville quantum gravity. 
\medskip

In order to write down a precise conjecture, several difficulties need to be addressed.
\bnum
\item Since the Gaussian Free Field $X$ is a highly oscillating random distribution, one first has to precise what is the meaning of $e^{\gamma X} dx dy$. This part is now standard and is known as \textbf{multiplicative chaos theory.} It goes back to the work of Kahane \cite{Kahane}. See \cite{ChaosReview, Oxford4, JuhanLectures} and references therein for useful reviews on this theory.   
\item A more serious difficulty has to do with the appropriate choice of normalisation. 
Planar maps are naturally equipped with a \textbf{probability measure} which assigns mass $1/n$ on each of the $n$ faces of the planar map. The asymptotic random measure should then be a random probability measure on the sphere or the torus. Now, as the sphere and the torus have no boundary, the Gaussian Free Field $X$ is only well-defined up to additive constants. One convenient way to proceed is to fix that additive constant so that $\int_{\S^2} X =0$, say. If one proceeds this way, it is easy to check that the multiplicative chaos measure $e^{\gamma X} dx dy$ as constructed in \cite{Kahane} will have some random positive mass. To make the conjecture precise, one should then {\em renormalize} $e^{\gamma X} dx dy$ in some way in order to obtain a probability measure. Two natural ideas first come to mind: either consider $e^{\gamma X} / \bigl(\int_{\S^2} e^{\gamma X} dxdy \bigr)$ or condition the free field $X$ so that $e^{\gamma X} dxdy$ is a probability measure on $\S^2$ (same thing on $\T^2$). As we shall see below, the right candidate does not correspond to either of these two naïve procedures. 
\item The last difficulty before settling  a precise conjecture has to do with Riemann uniformisation. Here one needs to distinguish the cases of the sphere and the torus. 
\bi
\item[a)] A conformal embedding in $\S^2$ (using either circle packings, Riemann uniformisation or any other natural choice) is rigid up the Möbius transformations $\S^2 \to \S^2$. To break this Möbius invariance which has three complex degrees of freedom (6 real degrees of freedom), the easiest way is to consider maps with three \textbf{marked vertices} $w_1,w_2,w_3$ and to add the additional constraint that the conformal embedding should send these marked vertices to three prescribed points on the sphere $x_1,x_2,x_3$.

\item[b)] In the case of the torus $\T^2$, the situation is different as there are also several possible ways of equipping $\T^2$ with a conformal structure. 
The possible conformal structures are given by the Modular space which in the case of $\T^2$ is non-trivial and is parametrised by a subset of $\C$ (any fundamental domain  for the action of $\mathrm{PSL}_2(\Z)$ on the upper-half plane $\H$). We refer to \cite{torus} for background and notations. Once the choice $\tau\in \H$ of conformal structure on $\T^2$ is prescribed, the conformal embedding is unique up to Möbius transformations $\T^2\to \T^2$ which have only one complex degree of freedom. As such, it is enough to consider planar maps on $\T$ with one marked vertex $w_1$ which is mapped to some $x_1\in \T^2$. 
\ei
\enum

Summarising the above discussion, a precise conjecture for the conformal embedding of large planar maps $\simeq \S^2$ or $\T^2$ should consist of a random \textbf{probability measure} on $\S^2$ (resp. $\T^2$) which is locally of the form $e^{\gamma X} dx dy$ and globally parametrized by three prescribed points $x_1,x_2,x_3 \in \S^2$ (resp. a conformal structure $\tau\in \H$ and one prescribed point $z\in \T^2$). Such a random probability measure on $\S^2$ was successfully built in the work \cite{sphere} and the case of $\T^2$ was solved in \cite{torus}. Each of these measures will naturally be called from then on \textbf{Liouville measures}. 
See the works \cite{disk, genus, Annulus} for extensions of this construction to other topologies (disk, genus $g\geq 2$, annuli)  and the works \cite{Seiberg, Ward, DOZZ} for striking recent results on the structure and properties of the Liouville field (DOZZ formula etc).  
In the case of $\S^2$, a different viewpoint on the Liouville field is provided by the works \cite{zipper,MatingTrees}. These works introduce the so-called \textbf{quantum cones measures} which in some sense describe the similar conformal embedding of planar maps except two out of three complex degrees of freedom are fixed. See the work \cite{Juhan} which highlight a deep and non-trivial connection between the {\em Liouville action approach} and the {\em Mating of Trees} approach.

The measure constructed in \cite{sphere} has an explicit and rather involved Radon-Nikodym derivative w.r.t the random measure $e^{\gamma X} dx dy$ which we shall now describe.

\subsection{Liouville measure on the sphere}

The construction in \cite{sphere} relies on the following \textbf{Liouville action} from which \textbf{Liouville quantum field theory (LQFT)} is built. 
\begin{align}\label{e.LA}
S_L(X,g):= \frac 1 {4\pi} \int_{\S^2} (|\nabla_g X|^2 + Q R_g(z) X(z) + 4\pi \mu e^{\gamma X(z)} ) g(z) dz
\end{align}
where $Q=Q_\gamma = \frac 2 \gamma + \frac \gamma 2$ is an important running constant in Liouville quantum gravity that we shall use throughout. The quadratic term in the action $\frac 1 {4\pi} \int_{\S^2} |\nabla_g X|^2 g(z) dz$ corresponds to a well-known Gaussian process called the \textbf{Gaussian Free Field}. 
\begin{remark}\label{r.shift}
It is important to notice at this point that the Gaussian Free Field appears with a slightly unusual renormalisation (especially in SPDEs): we have $\frac 1 {4\pi} |\nabla X|^2$ in the action instead of the more common $\frac 1 2 |\nabla X|^2$. This will only have the effect of shifting things by a multiplicative constant $\sqrt{2\pi}$.
\end{remark}

Let us introduce the relevant notations. Call $\nu_{\GFF^0}$ the law of a Gaussian Free Field on either $\S^2$ or $\T^2$ with vanishing mean and corresponding to the Hamiltonian $\frac 1 {4\pi} |\nabla X|^2 $. There are several ways to build this process (see for example \cite{Oxford4,JuhanLectures}). With our choice of renormalisation, $\bar X\sim \nu_{\GFF^0}$  has the following simple spectral expansion:
\[
\bar X \overset{(law)}= \sum_{n\geq 0} a_n \sqrt{2\pi} \frac 1 {\sqrt{\lambda_n}} \psi_n(x) 
\]
where $\{a_n\}$ are i.i.d $N(0,1)$ and $\{(\lambda_n,\psi_n)\}_n$ is an orthonormal basis of $L^2(\S^2)$ or $L^2(\T^2)$ which diagonalizes the Laplacian. $\nu_{\GFF^0}$ can be seen as a probability measure on Sobolev spaces of negative index $\calH^{-\alpha}$ for any $\alpha>0$. (Where $\calH^{-\alpha}$ stands for $\calH^{-\alpha}(\S^2)$ in the case of the sphere and resp. for $\T^2$).  
We shall use the following infinite measure on $\R \times \calH^{-\alpha}$: 
\begin{align*}\label{e.GFF}
d\nu_\GFF(c,\bar X) := d\lambda(c)  d\nu_{\GFF^0}(\bar X) 
\end{align*}
where $\lambda$ is the Lebesgue measure on the real-line $\R$.

\begin{definition}[Un-normalized GFF on the sphere/torus]\label{d.GFF}
We define the \textbf{un-normalized Gaussian Free Field} on the sphere/torus to be the distribution 
\[
X = c +\bar X
\] 
where $(c,\bar X)\sim \nu_\GFF$. Note that  $X$ should not be thought of as a proper random variable as $\nu_\GFF$ has infinite mass. Yet, we find this viewpoint useful (this is also the point of view in \cite{Cargese}) as it is  intimately related to the construction of the so-called $\phi^4_2$ field. Indeed the latter field on the sphere $\S^2$/torus $\T^2$ corresponds to the probability measure on $\calH^{-\alpha}$
with the following explicit Radon-Nikodym derivative w.r.t $\nu_\GFF$
\[
d\nu_{\phi^4_2}(X) := \frac 1 {Z_\lambda} \exp(- \lambda \int_{\S^2} \Wick{X^4} )  d\nu_\GFF(X)\,.
\]
With this convention we deviate slightly from the notations in \cite{sphere} which uses instead  the writing $\bar X = c + X$.
\end{definition}

Inspired by LQFT, David, Kupiainen, Rhodes and Vargas rigorously construct in \cite{sphere} the following \textbf{grand canonical Liouville measure}.

\begin{theorem}[Grand canonical Liouville measure, \cite{sphere}]\label{th.LM}
$ $

\ni
Fix some real-parameter $\mu>0$, called the cosmological constant, some $\gamma\geq 0$ and some triple of distinct points
\footnote{If one wants to study the correlation functions of Liouville-QFT, one should consider instead $n\geq 3$ punctures $\{x_i\}_{1\leq i \leq n}$ and the analysis is the same. See \cite{sphere}.}
$\{x_1,x_2,x_3\}\subset \S^2$. Fix also any choice of coupling constants $(\alpha_1,\alpha_2,\alpha_3)$ which satisfy the so-called \textbf{Seiberg bounds} 
\bnum
\item (First Seiberg bounds)
$\forall i, \alpha_i < Q$
\item (Second Seiberg bounds)
$\sum_{i=1}^3 \alpha_i > 2Q$\,, 
\enum
where $Q$ was defined above. Then, if $\gamma<\gamma_c=2$, the following formal Radon-Nikodym derivative defines a non-trivial probability measure 
$\P^{(x_i,\alpha_i)}_\mu$ on $\calH^{-\alpha}(\S^2)$ for all $\alpha>0$:
\begin{align*}\label{}
\frac{d\P_{\mu}^{(x_i,\alpha_i)}}
{d\nu_\GFF} (X):= \frac 1 {Z^{(x_i,\alpha_i)}_\mu}
\exp\left(\sum_{i=1}^3 \alpha_i X(x_i) - \frac Q {2 \pi} \int_{\S^2} X(z) dz -  \mu \int_{\S^2} e^{\gamma X(z)} dz
\right) 
\end{align*}
\end{theorem}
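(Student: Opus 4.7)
The plan is to reduce the construction to a finiteness question about the Gaussian multiplicative chaos by integrating out the constant mode first. Write $X = c + \bar X$ with $c\in\R$ and $\bar X \sim \nu_{\GFF^0}$, regularize $\bar X$ by circle averages $\bar X_\varepsilon$, and introduce the Wick-renormalized vertex operators and exponential chaos with the usual multiplicative $\varepsilon$-powers $\varepsilon^{\alpha_i^2/2}$ and $\varepsilon^{\gamma^2/2}$. Using that $\int_{\S^2}\bar X = 0$ and that the round metric satisfies $\int_{\S^2}dz = 4\pi$, the linear term in $c$ inside the exponent reduces to $(s-2Q)c$ with $s:=\sum_{i=1}^{3}\alpha_i$. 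The $c$-integration in $d\nu_\GFF(c,\bar X) = d\lambda(c)\,d\nu_{\GFF^0}(\bar X)$ then yields, after the change of variables $u = \mu e^{\gamma c} M_\varepsilon$, the explicit identity
\[
\int_\R e^{(s-2Q)c}\exp\!\bigl(-\mu e^{\gamma c}M_\varepsilon\bigr)\,dc \;=\; \gamma^{-1}(\mu M_\varepsilon)^{-(s-2Q)/\gamma}\,\Gamma\!\bigl((s-2Q)/\gamma\bigr),
\]
where $M_\varepsilon := \int_{\S^2} \varepsilon^{\gamma^2/2} e^{\gamma \bar X_\varepsilon(z)}dz$. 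The Gamma factor is finite precisely when $s>2Q$, which is exactly the second Seiberg bound; violating it produces an uncontrollable infrared divergence from the Lebesgue mode.

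Next I would absorb the vertex insertions via Girsanov. The Wick-renormalized linear functional $\sum_i \alpha_i \bar X_\varepsilon(x_i)$ reweights the law so that, under the tilted measure, $\bar X$ is shifted by $\sum_i \alpha_i G(x_i,\cdot)$, where $G$ is the Green kernel of $-\tfrac{1}{2\pi}\Delta$ on $\S^2$ satisfying $G(x,y) = -\log|x-y| + O(1)$ near the diagonal. Pulling this shift into the chaos integrand converts $M_\varepsilon$ into
\[
\widetilde M_\varepsilon \;=\; \int_{\S^2} F(z)\,d\mu_\gamma^\varepsilon(z), \qquad F(z) \;\asymp\; \prod_{i=1}^{3}|z-x_i|^{-\gamma \alpha_i}\ \text{ near the punctures,}
\]
where $d\mu_\gamma^\varepsilon \to d\mu_\gamma$ (the standard GMC) in probability for every $\gamma\in[0,2)$ by Kahane's theory. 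Combining this with the $c$-integration, the regularized partition function becomes $Z_\varepsilon = C_{\mu,\gamma,s,\alpha}\,\E\bigl[\widetilde M_\varepsilon^{-(s-2Q)/\gamma}\bigr]$ for an explicit positive constant, so the theorem reduces to showing $\E\bigl[\widetilde M_\gamma^{-(s-2Q)/\gamma}\bigr]\in(0,\infty)$.

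The main technical step, which I expect to be the crux, is verifying that $\widetilde M_\gamma$ is almost surely finite and strictly positive under the Seiberg bounds. Using a dyadic decomposition $A_n^i := \{z : e^{-n-1}\le |z-x_i|\le e^{-n}\}$ on which $F \asymp e^{n\gamma\alpha_i}$, and the multifractal estimate $\E[\mu_\gamma(A_n^i)]\asymp e^{-n(2+\gamma^2/2)}$, one obtains
\[
\E[\widetilde M_\gamma] \;\lesssim\; 1 + \sum_{i=1}^{3}\sum_{n\ge 0} e^{n\gamma \alpha_i} e^{-n(2+\gamma^2/2)},
\]
which converges if and only if $\gamma \alpha_i < 2+\gamma^2/2 = \gamma Q$ for each $i$, i.e.\ exactly the first Seiberg bound $\alpha_i < Q$. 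Strict positivity of $\widetilde M_\gamma$ is automatic since $F>0$ and $\mu_\gamma$ charges every open set for $\gamma<2$. Finally, finiteness of the negative moment $\E\bigl[\widetilde M_\gamma^{-r}\bigr]$ for any $r>0$ is a standard consequence of Kahane's theory whenever $\gamma<2$ (it suffices to restrict $\mu_\gamma$ to a bounded-away-from-the-punctures set and use the fact that $\mu_\gamma$ there has all negative moments). The required $\varepsilon\to 0$ convergence of $Z_\varepsilon$ to a finite nonzero $Z$, and of the regularized measures to a probability measure on $\calH^{-\alpha}(\S^2)$, then follows by dominated convergence from the uniform moment bounds above.
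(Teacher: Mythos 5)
First, note that the paper does not prove Theorem \ref{th.LM} itself: it is quoted from \cite{sphere}, so the comparison is with the DKRV construction you are implicitly reproducing. Your overall architecture is indeed the right one (split $X=c+\bar X$, integrate out the zero mode to produce the $\Gamma\bigl((s-2Q)/\gamma\bigr)$ factor and hence the second Seiberg bound, Girsanov out the insertions to reduce everything to finiteness and negative moments of a GMC integral against the weight $\prod_i|z-x_i|^{-\gamma\alpha_i}$), and the zero-mode computation and the negative-moment step are fine.

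The genuine gap is in your treatment of the first Seiberg bound. The estimate you invoke, $\E\bigl[\mu_\gamma(A_n^i)\bigr]\asymp e^{-n(2+\gamma^2/2)}$, is false: with the Wick normalisation the expectation of the chaos measure is comparable to Lebesgue measure, so $\E\bigl[\mu_\gamma(A_n^i)\bigr]\asymp e^{-2n}$ (equivalently, the multifractal exponent at $q=1$ is $\xi(1)=2$, not $2+\gamma^2/2$). Consequently your first-moment criterion gives convergence only for $\gamma\alpha_i<2$, i.e. $\alpha_i<2/\gamma$, and in the regime $2/\gamma\le\alpha_i<Q$ (which is allowed by the theorem for every $\gamma\in(0,2)$) one actually has $\E\bigl[\widetilde M_\gamma\bigr]=+\infty$ while $\widetilde M_\gamma<\infty$ almost surely. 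The exponent $2+\gamma^2/2=\gamma Q$ is the \emph{almost-sure} local exponent of $\mu_\gamma$ at a fixed (non-thick) point, not the exponent of the expectation; the correct argument, as in \cite{sphere}, is an a.s. (or conditional/truncated first-moment) argument: decompose around $x_i$ into dyadic annuli, use that the circle averages $\bar X_{e^{-n}}(x_i)$ behave like a Brownian motion in $n$ so that $\mu_\gamma(A_n^i)\approx e^{-n\gamma Q}e^{\gamma B_n}$ up to controllable factors, and conclude that $\sum_n e^{n\gamma\alpha_i}\mu_\gamma(A_n^i)<\infty$ a.s. exactly when $\alpha_i<Q$. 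This also undermines your final step: the $\eps\to0$ convergence cannot be run by dominated convergence from the (nonexistent, in general) uniform first-moment bounds, and must instead be handled by the truncation/monotonicity arguments of \cite{sphere} combined with the uniform finiteness of negative moments.
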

It will be easier to work with this probability measure after the 
Cameron-Martin shift
\begin{align*}\label{}
X(z):= \sum_i \alpha_i G(x_i,z) + \hat X\,,
\end{align*}
where each $G(x_i, \cdot)$ denote the Green function for the Laplacian on the sphere $-\Delta G(z,\cdot) = 2\pi(\delta_z - \frac 1 {4\pi})$. (See~\eqref{e.GreenSphere} for an explicit formula on $\S^2$ as well as \cite{sphere, LecturesLQG} for other expressions). 
This Girsanov transform has the effect of removing the singular Dirac point masses at $\{x_i\}$ and leads to the following formal Radon-Nikodym derivative:

\begin{corollary}[\cite{sphere}]
$ $

\ni
Using the above Girsanov transform $X(z)= \sum_i \alpha_i G(x_i,z) + \hat X$, the Liouville measure on the sphere now reads:
\begin{align*}\label{}
\frac{d\P_{\mu}^{(x_i,\alpha_i)}}
{d\nu_\GFF} (\hat X):= \frac 1 {\hat Z^{(x_i,\alpha_i)}_\mu}
\exp\left(  \frac{\sum_i \alpha_i - 2Q} {4 \pi} \int_{\S^2} \hat X(z) dz -  \mu \int_{\S^2} e^{\sum_i \gamma \alpha_i G(x_i,z)} e^{\gamma \hat X(z)} dz
\right) 
\end{align*}
\end{corollary}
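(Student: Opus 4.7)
The plan is to realise the substitution $X = h + \hat X$ with $h(z):=\sum_i \alpha_i G(x_i,z)$ as a Cameron--Martin (Girsanov) translation of the Gaussian component $\bar X$. Since the Green function is normalised to have vanishing mean, $\int_{\S^2} G(x_i,z)\,dz = 0$, so $\int_{\S^2} h = 0$ and $h$ is orthogonal to the constant mode $c$: the translation affects only the Gaussian factor $\nu_{\GFF^0}$ in the decomposition $d\nu_\GFF(c,\bar X) = d\lambda(c)\otimes d\nu_{\GFF^0}(\bar X)$.

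I would then compute the new density by assembling two contributions. First, substituting $X = h + \hat X$ directly in the exponent of Theorem~\ref{th.LM}: the insertions split as $\sum_i \alpha_i X(x_i) = \sum_i \alpha_i \hat X(x_i) + \sum_i \alpha_i h(x_i)$, with the deterministic second piece absorbed into the normalising constant; the linear term collapses to $-\tfrac{Q}{2\pi}\int_{\S^2}\hat X$ since $\int h = 0$; and the interaction term picks up the prescribed weight, $\mu\int_{\S^2} e^{\sum_j \gamma\alpha_j G(x_j,z)}\,e^{\gamma \hat X(z)}\,dz$. Second, the Cameron--Martin translation $\bar X \mapsto \bar X - h$ produces the Girsanov density
\[
\exp\Bigl(-\langle h,\hat X\rangle_{\mathcal H} - \tfrac12 \|h\|_{\mathcal H}^2\Bigr),\qquad \langle f,g\rangle_{\mathcal H} := \tfrac{1}{2\pi}\int_{\S^2}\nabla f\cdot\nabla g,
\]
the prefactor $\tfrac{1}{2\pi}$ being dictated by the nonstandard normalisation $\tfrac{1}{4\pi}\int|\nabla X|^2$ of the Hamiltonian (cf.\ Remark~\ref{r.shift}). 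Integration by parts against $-\Delta G(x_i,\cdot) = 2\pi(\delta_{x_i}-\tfrac{1}{4\pi})$ yields
\[
\langle h,\hat X\rangle_{\mathcal H} \;=\; \sum_i \alpha_i \hat X(x_i) \;-\; \frac{\sum_i \alpha_i}{4\pi}\int_{\S^2}\hat X,
\]
and consequently the delicate pointwise evaluations $\alpha_i \hat X(x_i)$ cancel exactly between the shifted exponent and the Girsanov correction. Collecting the remaining linear-in-$\hat X$ pieces produces the announced coefficient $\tfrac{\sum_i\alpha_i - 2Q}{4\pi}$ in front of $\int_{\S^2}\hat X$, while the chaos term is untouched.

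The main obstacle is that $h$ does not genuinely lie in the Cameron--Martin space $H^1(\S^2)$: the Green function has logarithmic singularities, so both $h(x_i) = \sum_j \alpha_j G(x_j,x_i)$ and $\|h\|_{\mathcal H}^2 = \sum_{i,j}\alpha_i\alpha_j G(x_i,x_j)$ carry divergent diagonal contributions, and the Cameron--Martin formula cannot be invoked literally. The natural way out is to run the entire calculation at a regularised level---replacing each $\delta_{x_i}$ by a smooth mollifier, or truncating the spectral expansion of $\bar X$---where every quantity is finite and the identity above holds on the nose, and then pass to the limit. The crucial point is that every divergence encountered is deterministic and independent of $\hat X$, so the divergent pieces combine into a (formally infinite) multiplicative factor that is absorbed into $\hat Z^{(x_i,\alpha_i)}_\mu$ without touching the $\hat X$-dependence. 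This is precisely the sense in which the formula is a \emph{formal} Radon--Nikodym derivative, consistent with the rigorous regularised construction of \cite{sphere}.
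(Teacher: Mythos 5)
Your computation is correct and is exactly the Cameron--Martin/Girsanov argument the paper gestures at (the paper itself gives no proof, deferring to \cite{sphere}): the cross term $\langle h,\hat X\rangle_{\mathcal H}=\sum_i\alpha_i\hat X(x_i)-\frac{\sum_i\alpha_i}{4\pi}\int_{\S^2}\hat X$ cancels the pointwise insertions and yields the coefficient $\frac{\sum_i\alpha_i-2Q}{4\pi}$, with the divergent deterministic constants ($h(x_i)$, $\|h\|_{\mathcal H}^2$) absorbed into $\hat Z^{(x_i,\alpha_i)}_\mu$, which is the right reading of this formal Radon--Nikodym identity. Your remark that the statement only holds after regularisation, with all divergences independent of $\hat X$, matches the level of rigour of Theorem \ref{th.LM} and the rigorous treatment in \cite{sphere}.
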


This construction theorem is proved in \cite{sphere}.  (The case of $n$-tuples $(x_i,\alpha_i)_{1\leq i \leq n}$ with $n\geq 3$ is also analysed there with the motivation of computing correlation functions between $e^{\alpha_i X(x_i)}$. See in particular the recent \cite{DOZZ} for the rigorous derivation of the so-called DOZZ formula for the  three point function). To make sense of the above formal Radon-Nikodym derivative, $\eps$-regularisations of the field are introduced in \cite{sphere}. For each small $\eps>0$, they consider $\hat X_\eps(x)$ to be $\<{\hat X,\eta_{x,\eps}}$ where $\eta_{x,\eps}$ is the uniform measure on $\p B_\eps(x)$ where $B_\eps(x)$ is the ball of radius $\eps$ around $x$ in $\S^2$ for the intrinsic round metric on $\S^2$. 
(which is of constant curvature $R=+2$). With this choice of regularisation, some appropriate renormalisation as $\eps\to 0$ is needed: instead of the ill-defined $e^{\gamma \hat X(z) }$, it is the Wick ordering $\Wick{e^{\gamma \hat X}} := \lim_{\eps\to 0} e^{\gamma \hat X_\eps - \frac {\gamma^2} 2 \Eb{\hat X_\eps^2}}$ which is  used.  See Remark \ref{r.Wick} below.
We refer to \cite{sphere} for more details (keeping in mind as mentioned above that the notations there are slightly different). 

\medskip

This grand-canonical probability measure has a very nice interpretation in terms of \textbf{planar maps} and a precise embedding conjecture can be written down in this grand canonical setting. See for example the enlightening discussions on this topic in \cite{LecturesLQG,sphere,torus}. 


\medskip

If one prefers instead to stick to planar maps of fixed size $n$ (\textbf{micro-canonical ensemble}), a unit-volume Liouville measure is also constructed in \cite{sphere} by in some sense conditioning $\P^{(x_i,\alpha_i)}_\mu$ to be of unit volume. This conditioning has the effect of factorising out the cosmological constant $\mu>0$ and leads to an explicit Radon-Nikodym derivative which has the following form
\begin{align*}\label{}
d\FK{\mu, 1}{(x_i,\alpha_i)}{\hat X}:= \frac 1 {Z^{(x_i,\alpha_i)}_{\mu, 1}}
\left[\int_{\S^2}  e^{\gamma  (\hat X(z) + \sum_{i=1}^3 \alpha_i G(x_i,z) )} dz
\right]^{-\frac {\sum_i \alpha_i - 2Q}{\gamma}} d\nu_\GFF(\hat X)\,.
\end{align*}
As we shall not use further this unit-volume Liouville measure, we refer to \cite{sphere, LecturesLQG}.
We now turn to the case of the torus which was settled in \cite{torus}. 

\subsection{Liouville measure on the torus}
We follow here \cite{torus} which constructs the Liouville measure on $\T^2$. As explained earlier, the Liouville measure on $\T^2$ will depend on the choice of a conformal structure which is parametrised by $\tau \in \H$ (in fact a sub-domain of $\H$). One way to picture the different possible choices of conformal structure is to work whatever $\tau\in \H$ is, on the same space $\T^2 := \C / (\Z + i\cdot \Z)$ equipped with the flat Riemannian metric 
\[
\hat g_\tau(x) dx^2 = |dx_1 + \tau dx_2|^2 \,\, \forall x=(x_1,x_2) \in \T^2
\]
The following probability measure on $\calH^{-\alpha}(\T^2)$ is built in \cite{torus} (we state the result after a similar Girsanov transform as on the sphere). 
\begin{theorem}[\cite{torus}]\label{th.LMt}
Fix some $\tau\in \H$ representing the choice of conformal structure on $\T^2$. 
Fix also some real-parameter $\mu>0$ (the cosm. constant), some $\gamma\geq 0$, some point $x_1\in \T^2$ and some  coupling constant $\alpha_1$ which satisfies the \textbf{Seiberg bounds} on the torus, namely
\[
0< \alpha_1 < Q
\]
Then, if $\gamma<\gamma_c=2$,  the following formal Radon-Nikodym derivative defines a non-trivial probability measure on $\calH^{-\alpha}(\T^2)$ for all $\alpha>0$: 
\begin{align*}\label{}
\frac{d\P_{\tau, \mu}^{(x_1,\alpha_1)}}
{d\nu_\GFF} (\hat X):= \frac 1 {\hat Z^{(x_1,\alpha_1)}_{\tau, \mu}}
\exp\left(  \frac {\alpha_1} {\lambda_{\hat g_\tau} (\T^2)}  \int_{\T^2} \hat X d\lambda_{\hat g_\tau} -  \mu \int_{\T^2} e^{\gamma \alpha_1 G_{\tau}(x_1,z)} e^{\gamma \hat X(z)} d\lambda_{\hat g_\tau}(z)
\right) 
\end{align*}
where $G_\tau(x_1,\cdot)$ is the $\tau$-Green function on $\T^2$ and $d\lambda_{\hat g_\tau}(z)$ is the volume form associated to $\hat g_\tau$. (See Section 3 in \cite{torus}).  (N.B. if one is willing to get back to the field $X$, an additional constant term $\frac Q 2 \log \mathrm{Im}(\tau)$ must be added, see \cite{torus}). 
\end{theorem}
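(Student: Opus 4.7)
The strategy is to mimic the sphere construction: first define $\eps$-regularised fields $\hat X_\eps(z)$ (via circle averages or a smooth mollifier), replace the ill-defined $e^{\gamma \hat X(z)}$ by the Wick-ordered exponential $\Wick{e^{\gamma \hat X_\eps(z)}}=e^{\gamma \hat X_\eps(z) - \g2 \Eb{\hat X_\eps(z)^2}}$, and pass to the limit $\eps \to 0$. The crucial reduction, specific to the torus, is to split $\hat X = c + \bar X$ with $c \in \R$ distributed according to Lebesgue measure and $\bar X \sim \nu_{\GFF^0}$ of zero mean with respect to $d\lambda_{\hat g_\tau}$, and then to explicitly integrate out $c$.

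With this decomposition the exponent in the formal Radon-Nikodym derivative becomes
\[
\alpha_1 c \;-\; \mu\, e^{\gamma c}\, M_\eps, \qquad M_\eps := \int_{\T^2} e^{\gamma \alpha_1 G_\tau(x_1,z)}\,\Wick{e^{\gamma \bar X_\eps(z)}}\, d\lambda_{\hat g_\tau}(z),
\]
since $\int_{\T^2}\bar X\,d\lambda_{\hat g_\tau}=0$ and $\Wick{e^{\gamma(c+\bar X_\eps)}} = e^{\gamma c}\Wick{e^{\gamma \bar X_\eps}}$. The change of variables $u = \mu e^{\gamma c} M_\eps$ reduces the $c$-integral to a Gamma integral:
\[
\int_\R e^{\alpha_1 c - \mu e^{\gamma c} M_\eps}\,dc \;=\; \frac{\Gamma(\alpha_1/\gamma)}{\gamma\,(\mu M_\eps)^{\alpha_1/\gamma}}.
\]
Convergence at $c \to -\infty$ requires precisely the lower Seiberg bound $\alpha_1>0$, while the exponential term handles $c \to +\infty$ for any $\gamma>0$. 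Consequently, the regularised total mass reads
\[
\hat Z^{(x_1,\alpha_1)}_{\tau,\mu,\eps} \;=\; \frac{\Gamma(\alpha_1/\gamma)}{\gamma\,\mu^{\alpha_1/\gamma}}\, \Eb{M_\eps^{-\alpha_1/\gamma}},
\]
and we are reduced to two tasks: showing that $M_\eps$ converges in probability to a positive finite random variable $M_0$ as $\eps\to 0$, and that $\Eb{M_\eps^{-\alpha_1/\gamma}}$ stays bounded and converges to $\Eb{M_0^{-\alpha_1/\gamma}}$.

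For the first task, for $\gamma<\gamma_c=2$ the Wick-ordered exponentials $\Wick{e^{\gamma \bar X_\eps}}\,d\lambda_{\hat g_\tau}$ converge in probability to the subcritical Gaussian multiplicative chaos measure $\mu_\gamma$ on $\T^2$ by Kahane's theory (and the standard circle-average/mollifier approximation results). The Green function $G_\tau(x_1,z)$ has a logarithmic singularity at $x_1$, so the weight $e^{\gamma \alpha_1 G_\tau(x_1,z)}$ behaves like $|z-x_1|^{-\gamma\alpha_1}$; the GMC-integral $\int_{\T^2} |z-x_1|^{-\gamma\alpha_1}\,d\mu_\gamma(z)$ is almost surely finite precisely when $\gamma\alpha_1 < 2 + \g2$, i.e. $\alpha_1 < Q$. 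This is exactly where the upper Seiberg bound enters, and it yields $M_\eps \to M_0 \in (0,\infty)$ almost surely.

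For the second task, negative moments of subcritical GMC are classically finite (any negative power $s>0$ of $\mu_\gamma(A)$ for an open set $A$ has finite expectation, by Kahane's comparison/Cameron--Martin arguments); the same method adapts to the weighted chaos $M_\eps$ since the weight is bounded below away from $x_1$ and the local contribution near $x_1$ only helps. A uniform bound $\sup_\eps \Eb{M_\eps^{-\alpha_1/\gamma}} < \infty$ can be obtained by localisation together with the Kahane convexity inequality, after which dominated convergence gives $\Eb{M_\eps^{-\alpha_1/\gamma}} \to \Eb{M_0^{-\alpha_1/\gamma}} \in (0,\infty)$. Combined with the fact that, conditionally on $\bar X$, the $c$-integration yields a Gaussian-type probability kernel on $\hat X$, this produces a bona fide probability measure on $\calH^{-\alpha}(\T^2)$. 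The main obstacle is the uniform (in $\eps$) control of the weighted GMC near the puncture $x_1$, which is where the upper Seiberg bound $\alpha_1<Q$ is essential and cannot be relaxed.
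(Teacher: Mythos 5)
Your outline is correct and follows essentially the same route as the construction in David--Rhodes--Vargas \cite{torus}, which is all the paper itself invokes for this statement: split $\hat X=c+\bar X$ under $\nu_\GFF$, integrate out the zero mode $c$ to produce the Gamma factor (this is where $\alpha_1>0$ enters), and control the GMC mass weighted by $e^{\gamma\alpha_1 G_\tau(x_1,\cdot)}$ together with its negative moments, the a.s.\ finiteness near the puncture being exactly the condition $\gamma\alpha_1<2+\frac{\gamma^2}{2}$, i.e.\ $\alpha_1<Q$, for $\gamma<2$. Since the paper gives no proof beyond the citation, there is nothing further to compare.
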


\subsection{The SPDE under investigation}
$ $

\ni
Our goal in this work is to construct a stochastic evolution whose invariant measure is given by the grand-canonical Liouville measures $\P^{(x_i,\alpha_i)}_\mu$ and $\P_{\tau, \mu}^{(x_1,\alpha_1)}$ introduced above in Theorems \ref{th.LM} and \ref{th.LMt}. We are inspired here by the following two classical cases: 
\bnum
\item \textbf{$\Phi^4_d$ model.} On the $d$-dimensional torus $\T^d$, this corresponds to the informally written measure
\[
d\nu_{\phi^4_d}(X) := \frac 1 {Z_\lambda} \exp(- \lambda \int_{\S^d} X^4(z) dz )  d\nu_\GFF(X)\,.
\]
This model has a long and rich history in the field of {\em Constructive Field Theory}. 
It is already rather non-trivial to define it rigorously in dimension $d=2$ (\cite{Nelson}) and even much more involved in dimension $d=3$. Motivated by this celebrated model, Da Prato and Debussche  studied in their very influential paper \cite{dPD} the following SPDE which is aimed at preserving $\Phi^4_{d=2}$:
\begin{align}\label{e.DP}
\p_t \Phi = \frac 1 2 \Delta \Phi + C \Phi - \Phi^3 + \xi
\tag{$\Phi^4_d$}
\end{align}
where $\xi$ is a space-time white noise, for example on $\R_+ \times \T^2$ if we are interested in the $\Phi^4_2$ model on the two-dimensional torus $\T^2$. In \cite{HairerReg}, Hairer developped his theory of {\em Regularity Structures} which enabled him to solve (as a function of $\xi$) many highly non-trivial SPDEs including \textbf{dynamical $\Phi^4_3$ in $d=3$.} See also \cite{Gubi, Kup} for other approaches based respectively on {\em para-products} and {\em Renormalization group analysis}.  
\medskip

\item \textbf{Sine-Gordon model.} On the $d=2$ dimensional torus $\T^2$, Hairer and Shen study in \cite{sine} the following SPDE
\begin{align}\label{e.SG}
\p_t u = \frac 1 2 \Delta u + \sin(\beta u) + \xi\,
\end{align}
which is naturally associated to the so-called {\em Sine-Gordon model} in Constructive Field Theory. It corresponds to the following measure informally written as
\[
d\nu_{\phi^4_d}(X) := \frac 1 {Z_\lambda} \exp(- \lambda \int_{\T^2} \cos(\beta X(z)) dz )  d\nu_\GFF(X)\,.
\]
Using the Da Prato-Debussche framework, a local existence result for the SPDE~\eqref{e.SG} is proved in \cite{sine} for the range $\beta^2\in [0,4\pi)$. Using the regularity structures from \cite{HairerReg}, they manage to push the local existence further to $\beta^2\in[4\pi, \frac {16} 3 \pi)$. (The limiting value is conjectured to be $\beta^2=8\pi$ above which the SPDE becomes super-critical).  The paper \cite{sine} will be a constant source of inspiration in this text due to the important similarities between the Sine-Gordon SPDE~\eqref{e.SG} and the one we shall focus on. Indeed we will replace the non-linearity $e^{i \beta X}$ by the non-linearity $e^{\gamma X}$.   
\enum

\ni
As in the two celebrated cases above, since the Liouville measure (Theorems \ref{th.LM} and \ref{th.LMt}) is also of Hamiltonian form, we are led to consider the following formal SPDEs (For notational ease, we will drop the hat in $\hat X$ from then on).
\bi
\item On the torus $\T^2$
\begin{align}\label{e.DLT}
\p_t X = \frac 1 {4\pi}  \Delta X - \frac 1 2 \mu \gamma  e^{\alpha_1 \gamma G_\tau(x_1,\cdot)} e^{\gamma X} +   \frac {\alpha_1} { 2\lambda_{\hat g_\tau} (\T^2)}  + \xi 
\end{align}
\item On the sphere $\S^2$ 
\begin{align}\label{e.DLS}
\p_t X = \frac 1 {4 \pi} \Delta_{\S^2}  X - \frac 1 2 \mu \gamma \prod_i e^{\alpha_i \gamma G(x_i,\cdot)} e^{\gamma X} + \frac {\sum \alpha_i - 2Q} {8\pi}  + \xi 
\end{align}
where $\Delta_{\S^2}$ is the Laplace-Beltrami operator on $\S^2$ and $\xi$ is a space-time Gaussian white noise on $\R_+ \times \S^2$. 
\ei

\begin{remark}\label{}
Note that prior to the use of a suitable Girsanov transform (see Theorem \ref{th.LM}), the corresponding SPDE, say on the sphere would be of the following form  
\begin{align*}\label{}
\p_t X = \frac 1 {4 \pi} \Delta X - \frac 1 2 \mu \gamma e^{\gamma X} + \frac 1 2 \sum_{i=1}^3 \alpha_i \delta_{x_i}(\cdot)   - \frac Q {4\pi}  + \xi 
\end{align*}
which is much more singular. 
\end{remark}

Note that the SPDEs~\eqref{e.DLT}~\eqref{e.DLS} have a very asymmetric nature : the term $- e^{\gamma X(z)}$ prevents the solution $X(t,z)$ to reach high positive values while the global positive drift term $\frac{ \alpha_1}   {2\lambda_{\hat g_\tau}}$ (resp. $+\frac{\sum \alpha_i - 2 Q}{8\pi}$) prevents the field from diverging to $-\infty$. This is by the way an interesting way of interpreting the second Seiberg bound in Theorem \ref{th.LM}. This asymmetry is very different from the case of dynamical $\Phi^4_d$, where the vector field reads as follows: $\p_t \Phi = \Delta \Phi  - \Phi^3 + C \Phi + \xi$ and is thus symmetric. 
These considerations would be crucial to handle global (in time) existence results but are not essential for local existence results.
Indeed as pointed out in \cite{HairerReg}, the tools developped in this paper also provide a local existence result for the SPDE $\p_t \Phi = \Delta \Phi  \red{+} \Phi^3 + \xi$ which clearly has no chance to have global solutions. Because of this and also because the main difficulty for us will be to handle the exponential term $e^{\gamma X}$, we will focus in most of this work on the following simplified SPDEs (resp. on $\T^2$ and $\S^2$) 
\begin{align}\label{e.DLSs}
\p_t X  = \frac 1 {4\pi} \Delta X - e^{\gamma X} + \xi
\end{align}
This will have the advantage to make the proofs more readable. Another possible motivation for this simplification is that the same analysis applies to the more symmetric SPDE 
\begin{align}\label{e.DLSsinh}
\p_t X  = \frac 1 {4\pi} \Delta X - \sinh(\gamma X) + \xi\,,
\end{align}
which corresponds to the so-called $\cosh$-interaction in Quantum Field Theory. 
We will get back to the original SPDEs \eqref{e.DLT}, \eqref{e.DLS} which was motivated by LQFT only in Section \ref{s.Ext}.  See Theorem \ref{th.mainP} below. Finally, as mentioned above, yet another reason for this simplified equation is its close similarity with the dynamical Sine-Gordon SPDE~\eqref{e.SG} studied in \cite{sine} \footnote{Note the different choice of renormalisation for Sine-Gordon. See Remark \ref{r.shift}}

\subsection{Intermittency and SPDEs}\label{ss.intermit}

One interesting aspect of the above SPDE~\eqref{e.DLSs} is that it presents new features which are due to the {\em intermittent nature} of the non-linearity $e^{\gamma X}$. 
We refer for example to \cite{intermit} for a discussion of intermittency. In the present context, one may give the following two informal definitions of intermittency in the framework of the Besov spaces $\Cs^\alpha$ defined later in Subsection \ref{ss.Besov}. 
\bnum
\item Besov multi-fractality.  Given a distribution $f$, for any space-time point $z=(t,x)$, one may define its local Besov regularity $\alpha(z)$ similarly as one may define the local Hölder regularity of a continuous function. (Using say the setup introduced in Subsection \ref{ss.Besov} and relying on $\liminf_{\lambda \to 0}$). A distribution $f$ will be Besov mono-fractal if that local regularity happens to be  the same in each space-time point $z$. 
\item Besov intermittency. For random signals $f$, a related but different way to detect intermittency (often used in the study of turbulence) is through the behaviour of its moments. A time-space homogeneous random distribution $f$ will be intermittent if moments such as $\Eb{|\<{f,1_{B(z,\lambda)}}|^p}$ behave like $\lambda^{\xi(p)}$ when $\lambda\to 0$ for some \textbf{non-linear} spectrum $p\mapsto \xi(p)$. 
\enum

To our knowledge, all SPDEs which have been studied so far are mono-fractal for both of these definitions. Below are some examples.

\bnum
\item \textbf{KPZ.} The non-linearity $(\p_x h)^2$ is clearly a mono-fractal object. Indeed, it is the ``square of a white-noise'', i.e. not a well-defined distribution, but its regularizations are mono-fractal. (The local regularity may deviate from the average one only by $\log1/\eps$ factors). 
\item $\mathbf{\Phi^4_2}$ and $\mathbf{\Phi^4_3}$ are mono-fractal as well. Indeed, their local regularity in each space-time point is the same as for the linear stochastic heat equation. Another way to see the mono-fractal behaviour is that in some sense, the $p^{th}$ moments of the non-linearity behave linearly in $p$. 
\item \textbf{Sine-Gordon} $\mathbf{e^{i\, \beta \Phi}}$ is mono-fractal too. 
\item \textbf{Generalized KPZ.} (\cite{Bruned}) Non-linearities such as $g(u) (\p_x u)^2$ are mono-fractal. 
\enum

\smallskip
\ni
{\bf Misleading sub-criticality.} We will see in this work that the SPDE~\eqref{e.DLSs} is intermittent for both definitions. See Proposition \ref{pr.mom} which shows that $p^{th}$ moments of the non-linearity are governed by a non-linear spectrum $\xi_\s(p)$. In fact it can easily be shown (a classical fact in static Gaussian Multiplicative Chaos theory) that $p^{th}$ moments cease to exist when $p>8/\gamma^2$. It is also a classical fact that such GMC measures have a whole multi-fractal spectrum of regularities (corresponding to the so-called {\em thick points} of the field). See Remark \ref{r.sharp}.

One consequence of this intermittent behaviour is that it makes it harder to guess when the SPDE~\eqref{e.DLSs} becomes critical. Let us consider the following three cases to illustrate what we mean. 
\bnum
\item[a)] In the case of KPZ (resp. $\Phi^4_d$), if we zoom around a fixed space-time point using the change of variable $\hat h:= \lambda^{\frac d 2-1}h(\lambda^2 t, \lambda x)$ (resp. $\hat \Phi:= \lambda^{\frac d 2 -1} \Phi(\lambda^2 t, \lambda x)$) and let $\lambda\searrow 0$, then we obtain the same SPDE but where the non-linearity has shrinked by $\lambda^{1-\frac d 2}$ (resp. $\lambda^{4-d}$). This rescaling argument explains why $d_c=2$ for KPZ and $d_c=4$ for $\Phi^4_d$. 
\item[b)] In the case of Sine-Gordon (\cite{sine}), $\p_t u = \frac 1 2 \Delta u + \sin(\beta u) + \xi$, it is natural to play with the same rescaling $\hat u:= u(\lambda^2 t, \lambda x)$. In this case, the SPDE does not transform as simply as in the above two examples (the non-linearity is not polynomial), yet it is easy to check that {\em in law} (and assuming the solution locally looks like the solution of the SHE), the non-linearity is shrinking by $\lambda^{2-\frac{\beta^2} {4\pi}}$. This gives the correct prediction that $\beta_c=\sqrt{8 \pi}$ which corresponds to the Kosterlitz-Thouless phase transition. 
\item[c)] In the case of our SPDE $\p_t X = \frac 1 {4\pi} \Delta X - e^{\gamma X} + \xi$, one can try the same rescaling argument as for Sine-Gordon by zooming in with $\hat X:=X(\lambda^2 t, \lambda x)$. This time, again {\em in law} and assuming the solution locally looks like the solution of the SHE (which will be the case), the non-linearity is shrinking by $\lambda^{2\red{+} \g2}$. At first sight, this suggests this SPDE will be sub-critical for all parameters $\gamma >0$! The solution to this paradox is precisely the intermittent nature of the non-linearity. This zooming argument is correct but is applied at a typical space-time point according to Lebesgue measure. At those points, the non-linearity is indeed ``smaller and smaller'' as $\gamma$ increases. Instead, the rescaling argument should be applied around atypical space-time points for Lebesgue which are more meaningful for the non-linearity (i.e. around the appropriate choice of {\em thick points} of the field $X$). It is therefore an interesting problem in this intermittent case to detect at which value $\gamma=\gamma_c$ the SPDE~\eqref{e.DLSs} will stop being critical. (See Question \ref{q.c} and the discussion before). 
\enum

\subsection{Main results}\label{ss.results}
$ $

\ni
Let us first settle some notations. Recall $\xi$ denotes a space/time white noise on either $\R\times \T^2$ or $\R \times \S^2$ (i.e. the Gaussian process with covariance $\Eb{\xi(s,x) \xi(t,y)}= \delta_{(s,x), (t,y)}$). \margin{ALREAYDY mentioned before!?}
We will need to regularise our singular distributions such as $\xi$ by working with some smoothened noise $\xi_\eps$. 
On the flat case $\R\times \T^2$, as in \cite{sine} say, we fix some smooth and compactly supported function $\varrho: \R \times \R^2\to \R_+$ integrating to 1. We then consider $\xi_\eps:= \varrho_\eps * \xi$, with $\varrho_\eps(t,x)=\eps^{-4} \varrho(\eps^{-2} t, \eps^{-1} x)$. 
In the curved case of the sphere, see Subsection \ref{ss.setupS} for our precise setup.  
See also \cite{sphere, LecturesLQG} for a slightly different smoothing on the sphere.


\begin{theorem}\label{th.mainT}
Let $0\leq \gamma <\gamma_{dPD}:=2\sqrt{2}-\sqrt{6}\approx 0.38$. 
\margin{or $0< \gamma^2 < \gamma_1^2 = 14-8\sqrt{3}$} 
Fix an initial condition of the form $u(0) = \Phi(0)+ w$ (where $\Phi$ is the log-correlated field induced by the linear heat equation, see Subsection \ref{ss.dPD} and with $w\in \calC^{a}(\T^2)$ for some $a>0$). Consider the solution $X_\eps$ to 
\margin{C'est ICI que la constante $C_\rho$ est importante a prendre en consideration dans la definition de $\Wick{e^{\gamma X_\eps}}$ !}
\begin{align*}\label{}
\begin{cases}
& \p_t X_\eps = \frac 1 {4 \pi} \Delta X_\eps - C_\varrho \eps^{\frac {\gamma^2} 2} e^{\gamma X_\eps} + \xi_\eps  \\
& X_\eps(0,\cdot) = \Phi_\eps(0) + w
\end{cases}
\end{align*}
There is a way to tune the constant $C_\varrho$ (depending only on $\gamma$ and the mollifier  $\varrho$) as well as an a.s. positive time $T>0$ such that $X_\eps$ converges in probability in the space $\Cs^{-1}([0,T]\times \T^2)$ to a limiting distributional process $X$ which is independent of the mollifier $\varrho$. (See Subsection \ref{ss.Besov} for the definition of Besov spaces $\calC_\s^\alpha$). 
\margin{C: I almost copied and paste here... Make some changes}
\end{theorem}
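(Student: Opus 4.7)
The plan is to implement the Da Prato--Debussche scheme in parabolic Besov spaces. First decompose $X_\eps = \Phi_\eps + v_\eps$, where $\Phi_\eps$ is the stationary solution of the linear stochastic heat equation $\p_t \Phi_\eps = \tfrac{1}{4\pi}\Delta \Phi_\eps + \xi_\eps$ furnishing the initial log-correlated field, so that the remainder solves
\[
\p_t v_\eps = \frac{1}{4\pi}\Delta v_\eps - e^{\gamma v_\eps}\, M_\gamma^\eps, \qquad v_\eps(0) = w,
\]
with $M_\gamma^\eps := C_\varrho\, \eps^{\gamma^2/2}\, e^{\gamma \Phi_\eps}$ the regularised space--time Wick exponential. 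The constant $C_\varrho$ is tuned so that $\frac{\gamma^2}{2}\mathrm{Var}(\Phi_\eps(t,x)) - \frac{\gamma^2}{2}\log(1/\eps) + \log C_\varrho \to 0$ as $\eps \downarrow 0$, which makes $M_\gamma^\eps$ a genuine regularisation of a canonical space--time Gaussian multiplicative chaos (GMC) and kills the $\varrho$-dependence of the limit.

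The first key step is to prove $M_\gamma^\eps \to M_\gamma$ in probability in a parabolic Besov space $\Cs^{\alpha_{\mathrm{GMC}}}$. I would establish two-parameter $L^p(\Omega)$ moment bounds on $M_\gamma^\eps(\eta_{z,\lambda})$ for parabolically rescaled test functions $\eta_{z,\lambda}$, together with analogous bounds on differences $M_\gamma^\eps - M_\gamma^{\eps'}$, adapting the scheme of \cite{sine}. For each $1 \le p < 8/\gamma^2$ (the moment existence range in parabolic dimension $d_\s = 4$), direct Gaussian or Kahane-type computations yield $\|M_\gamma^\eps(\eta_{z,\lambda})\|_{L^p(\Omega)} \lesssim \lambda^{-\gamma^2(p-1)/2}$, so that $M_\gamma^\eps$ lies almost surely in $B^{-\gamma^2(p-1)/2-\kappa}_{p,p}$. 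The Besov embedding into $\Cs^{\,\cdot}$ costs $d_\s/p = 4/p$, and optimising in $p$ at $p = 2\sqrt{2}/\gamma$ gives the sharp regularity
\[
\alpha_{\mathrm{GMC}} \;=\; -2\sqrt{2}\,\gamma + \frac{\gamma^2}{2} - \kappa, \qquad \kappa > 0 .
\]
A Kolmogorov-type criterion applied to the $(\eps,\eps')$ bounds then yields the probabilistic convergence.

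Granted this regularity, the remainder equation is solved by Picard iteration in $C([0,T],\mathcal{C}^\beta(\T^2))$ for some $\beta \in (0,1)$ and some random $T>0$. Parabolic Schauder estimates for the semigroup associated with $\tfrac{1}{4\pi}\Delta$ gain two parabolic derivatives, so that if $e^{\gamma v}\, M_\gamma \in \Cs^{\alpha_{\mathrm{GMC}}}$ then the Duhamel integral lies in $\Cs^{\alpha_{\mathrm{GMC}}+2}$. The product $e^{\gamma v}\cdot M_\gamma$ is defined via Bony's paraproduct decomposition, for which the sum of the regularities of the two factors must be strictly positive. Composition with the entire function $e^{\gamma \cdot}$ is locally Lipschitz on $\mathcal{C}^\beta$ for $\beta \in (0,1)$, so $e^{\gamma v}$ inherits the regularity $\alpha_{\mathrm{GMC}}+2$ of $v$, and the condition $2\alpha_{\mathrm{GMC}} + 2 > 0$ reads $\gamma^2 - 4\sqrt{2}\,\gamma + 2 > 0$, whose smaller root is exactly $\gamma_{dPD} = 2\sqrt{2} - \sqrt{6}$. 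Local Lipschitz bounds on the nonlinear map $v \mapsto \int_0^{\cdot} P_{t-s}\bigl(e^{\gamma v(s)}\, M_\gamma^\eps(s)\bigr)\, ds$, uniform in $\eps$, then yield contraction on a ball for $T$ sufficiently small and hence a unique fixed point. Passing to the limit via continuity in $M_\gamma^\eps$ gives $X_\eps \to \Phi + v =: X$ in $\Cs^{-1}([0,T]\times \T^2)$, and the $\varrho$-independence of $X$ follows from that of $M_\gamma$.

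The main obstacle is the first step: establishing Besov regularity of $M_\gamma^\eps$ with the sharp exponent $\alpha_{\mathrm{GMC}}$ and matching control on increments. Since the $L^p$ moments of the chaos exist only for $p < 8/\gamma^2$, one cannot let $p \to \infty$ to obtain arbitrarily high H\"older regularity; the particular algebraic form of $\gamma_{dPD}=2\sqrt{2}-\sqrt{6}$ arises precisely from the balance between the chaos exponent $\gamma^2(p-1)/2$ and the Besov embedding loss $4/p$. Carrying out this optimisation rigorously while handling the non-Gaussian tails of $M_\gamma^\eps$ and the non-isotropic covariance structure inherited from the parabolic heat kernel is the heart of the argument and corresponds to the independent-interest GMC convergence result flagged in the abstract.
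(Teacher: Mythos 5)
Your proposal follows essentially the same route as the paper: the Da Prato--Debussche decomposition with the Wick-renormalised chaos $C_\varrho\,\eps^{\gamma^2/2}e^{\gamma\Phi_\eps}$, parabolic moment bounds for $q<8/\gamma^2$ optimised at $q=2\sqrt{2}/\gamma$ to get Besov regularity $\frac{\gamma^2}{2}-2\sqrt{2}\gamma$, and a fixed point built from the parabolic Schauder estimate, the Young-type multiplication theorem and local Lipschitz continuity of $u\mapsto e^{\gamma u}$, with the condition $2\alpha+2>0$ producing exactly $\gamma_{dPD}=2\sqrt{2}-\sqrt{6}$ and convergence of $X_\eps$ following from continuity of the fixed point in the input chaos. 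The GMC regularity/convergence step you flag as the main obstacle is precisely what the paper carries out via Kahane's convexity inequality and a parabolic tiling argument (Lemma \ref{l.T0}, Propositions \ref{pr.moments} and \ref{pr.mom}) combined with the wavelet/Kolmogorov tightness criterion of \cite{tightness}.
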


\begin{remark}\label{r.Wick}
Note that the non-linearity $e^{\gamma X}$ has been renormalized in order to have a limiting solution for the SPDE. The appropriate way of renormalizing here (as for dynamical $\Phi^4_2$ for example) is the so-called \textbf{Wick ordering}. For the linear heat equation, i.e.  the Gaussian process $\Phi$ introduced in subsection \ref{ss.dPD}, this corresponds to 
\begin{align}\label{e.Wick}
\Wick{e^{ \gamma \Phi_\eps}}:= e^{-\g2 \Eb{\Phi_\eps(0)^2}} e^{\gamma \Phi_\eps} \sim_{\eps \to 0} C_\varrho \eps^{\g2} e^{\gamma \Phi_\eps}
\end{align}
where the asymptotics as $\eps \to 0$ is given by Proposition \ref{l.key} and where the constant $C_\varrho$ is set to be equal to $e^{-\g2 \hat C_\varrho}$ (the latter constant $\hat C_\varrho$ is defined in Proposition \ref{l.key}). This explains the above renormalisation as $\eps \to 0$. 
\end{remark}

\begin{theorem}[* Assuming a spherical Schauder estimate, see Remark \ref{r.SchauderSphere} *]\label{th.mainS} $ $

Let $0\leq \gamma < \gamma_{dPD} = 2\sqrt{2} - \sqrt{6}$. Fix an initial condition of the form $u(0) = \Phi(0)+ w$ (with $w\in \calC^{a}(\S^2)$ for some $a>0$). Consider the solution $X_\eps$ to 
\begin{align*}\label{}
\begin{cases}
& \p_t X_\eps = \frac 1 {4 \pi} \Delta X_\eps - C_\varrho \eps^{\frac {\gamma^2} 2} e^{\gamma X_\eps} + \xi_\eps  \\
& X_\eps(0,\cdot) = \Phi_\eps(0) + w
\end{cases}
\end{align*}
There is a way to tune the constant $C_\varrho$ (depending only on $\gamma$ and the mollifier  $\varrho$)
as well as an a.s. positive time $T>0$  such that $X_\eps$ converges in probability in the space $\Cs^{-1}([0,T]\times \S^2)$ to a limiting distributional process $X$ which is independent of the mollifier $\varrho$. (See Subsection \ref{ss.setupS} for the definition of these spaces in the case of the sphere). 
\margin{C: I almost copied and paste here... Make some changes}
\end{theorem}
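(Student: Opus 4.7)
The plan is to adapt the torus proof of Theorem \ref{th.mainT} to the sphere, working inside the Da Prato-Debussche framework and relying on the spherical parabolic Schauder estimate flagged in Remark \ref{r.SchauderSphere}. I would decompose $X_\eps = \Phi_\eps + Y_\eps$, where $\Phi_\eps$ is the mild solution to the linearized regularized equation $\p_t \Phi_\eps = \frac{1}{4\pi}\Delta_{\S^2}\Phi_\eps + \xi_\eps$ on $\R_+ \times \S^2$ with initial data $\Phi_\eps(0,\cdot)$. The remainder then satisfies $\p_t Y_\eps = \frac{1}{4\pi} \Delta_{\S^2} Y_\eps - M_\eps\, e^{\gamma Y_\eps}$ with $Y_\eps(0,\cdot) = w$, where $M_\eps := C_\varrho\,\eps^{\gamma^2/2}e^{\gamma\Phi_\eps}$ is the Wick-renormalized exponential of Remark \ref{r.Wick}. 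This decouples the construction into a purely probabilistic input $M_\eps$ and a deterministic non-linear fixed point problem for $Y_\eps$.

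The probabilistic step is to show $M_\eps \to M$ in probability in a parabolic Besov space $\Cs^{-\alpha}([0,T]\times \S^2)$ for some $\alpha = \alpha(\gamma) < 1$. On $\T^2$ this is the Besov regularity result for space-time GMCs highlighted in the abstract, proved by Hairer-Shen style moment bounds on $\Eb{|\<{M_\eps - M_{\eps'}, \eta^\lambda_z}|^p}$ for even $p$ (using the Gaussian structure of $\Phi_\eps$ and the multifractal spectrum of GMCs), followed by a Kolmogorov-Besov criterion. The same scheme should apply on $\S^2$: Gaussianity and log-correlation of $\Phi_\eps$ come from its spectral expansion in spherical harmonics $\psi_n$ weighted by $\lambda_n^{-1/2}$, and the relevant small-scale moment integrals reduce to their Euclidean counterparts via local exponential charts with bounded multiplicative errors. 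The resulting exponent $\alpha(\gamma)$ matches the torus one, and the requirement $\alpha < 1$ then produces the same range $\gamma < \gamma_{dPD} = 2\sqrt{2}-\sqrt{6}$.

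With $M \in \Cs^{-\alpha}$ in hand, I would close a fixed point for $Y_\eps$ in $\Cs^\beta([0,T]\times \S^2)$ with $\beta \in (\alpha, 2-\alpha)$, a window which is non-empty precisely when $\alpha<1$. In this range the Bony product $M \cdot e^{\gamma Y}$ makes sense in $\Cs^{-\alpha}$, because $e^{\gamma Y}$ is continuous and inherits the H{\"o}lder regularity $\beta$ of $Y$; the assumed spherical parabolic Schauder estimate then returns its heat integral in $\Cs^\beta$, gaining two parabolic derivatives. Choosing an a.s.\ positive stopping time $T = T(w, \Phi) > 0$ small enough makes the map contractive, and continuity of the fixed point in its inputs $(M_\eps,\Phi_\eps)\to(M,\Phi)$ gives $X_\eps = \Phi_\eps + Y_\eps \to \Phi+Y$ in $\Cs^{-1}$. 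Independence of the mollifier $\varrho$ is inherited from that of $M$ and $\Phi$.

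The two places where the spherical proof is substantively more delicate than the torus one are exactly the ingredients quarantined above: controlling the curvature-induced error in the exponential charts uniformly in $\eps$ when porting the Besov GMC analysis to $\S^2$, and, more seriously, establishing the parabolic Schauder estimate on $\S^2$ in the scale $\Cs^\alpha$. The latter is flagged in Remark \ref{r.SchauderSphere} and is the reason the theorem is stated conditionally; once both ingredients are in place the fixed point argument runs verbatim as on $\T^2$.
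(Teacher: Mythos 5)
Your overall skeleton is the same as the paper's: Da Prato--Debussche decomposition, Besov convergence of the Wick exponential $\Theta_\eps=\Wick{e^{\gamma\Phi_\eps}}$, the multiplication theorem together with the (assumed) spherical parabolic Schauder estimate, and a contraction on a small time interval whose fixed point depends continuously on the inputs. However, the probabilistic step, which is the actual content of the paper's sphere section, is where your argument has a genuine gap. First, your plan to run ``Hairer--Shen style moment bounds for even $p$'' does not deliver the stated range. Because of intermittency the moments of $\Theta_\eps$ grow like $\lambda^{-\frac{\gamma^2}{2}q(q-1)}$ (quadratic in $q$, and nonexistent for $q\geq 8/\gamma^2$), so the Kolmogorov/wavelet argument must be optimized over \emph{real} $q$, with optimum $\hat q=2\sqrt{2}/\gamma$, yielding regularity $\frac{\gamma^2}{2}-2\sqrt{2}\gamma$; this is exactly why the paper invokes the Furlan--Mourrat tightness criterion, which accepts non-integer $q$, rather than the even-moment scheme of the Sine--Gordon paper (where the spectrum is linear in $q$ and arbitrarily high moments exist). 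Restricting to even integers one checks that the best admissible exponent (attained at $q=8$) only reaches $\gamma<1/\sqrt{7}$, strictly below $\gamma_{dPD}=2\sqrt{2}-\sqrt{6}$, so your claim that ``the resulting exponent matches the torus one'' would not follow from the method you describe. Moreover, even for real $q$ the moment bounds themselves are not obtained by a direct Gaussian computation: since no log-$*$-invariant kernel is available in the parabolic setting, the paper proves them via Kahane's convexity inequality combined with a tiling/approximate-scaling argument (Lemma \ref{l.T0}, Proposition \ref{pr.mom}).

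Second, the transfer of these estimates to $\S^2$, which you compress into ``local exponential charts with bounded multiplicative errors,'' is precisely what requires work. The paper does not take $\Phi_\eps$ to be the exact mild solution but rather $K*\xi_\eps$ for a kernel $K$ truncated in time \emph{and} supported in a hemisphere, exactly to stay away from the cut-locus where the heat-kernel comparison fails; the covariance estimates of Proposition \ref{l.keyS} then rest on the explicit spherical Green function, Nagase's asymptotics $p_t^{\S^2}\approx p_t^{\R^2}\sqrt{r/\sin r}$, the metric distortion Lemma \ref{l.distort}, and derivative bounds on $\calQ$ needed for the $\eps$-difference estimate; and the Kahane argument is adapted using a spherical geodesic grid since exact parabolic scaling is unavailable on the curved space. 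None of these steps is automatic, and without them your assertion that the GMC Besov analysis ``reduces to its Euclidean counterpart'' is an unproved claim rather than a proof. Once these ingredients are supplied (and the Schauder estimate assumed, as the theorem's statement allows), your fixed-point part does run essentially verbatim as in the paper.
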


\begin{remark}\label{}
As in \cite[Section 2.]{sine} or \cite{Labbe}, one could certainly strengthen our hypothesis on the initial condition $X(0,\cdot)$ but we decided to limit ourself to the simplest setting as in \cite{dPD}. \margin{See the discussion in Section \ref{s.??}. Probably no such discussion} 
Note that our hypothesis is very meaningful as the Liouville measure introduced in Theorem \ref{th.LM} is absolutely continuous w.r.t $\Phi(0)$ and is of the above form so that our hypothesis allows us to start from equilibrium. 
\end{remark}

As mentioned above, these results describe the Da Prato-Debussche phase and do not  rely on the recent breakthrough theories for SPDEs, i.e. regularity structures,  paraproducts or renormalisation techniques.
Interestingly, using the fact that the distribution valued process $\Wick{e^{\gamma \Phi}}$ is not a generic highly oscillatory distribution but rather a (singular) positive measure, we manage to push the Da Prato-Debussche limit further by exploiting the positivity. We only obtain this way a strong solution to the SPDE in this regime (measurable w.r.t the input $\xi$) and do not control the convergence of $X_\eps \to X$ in this case. 

\begin{theorem}\label{th.main2}
When $\gamma_{\mathrm{dPD}} \leq \gamma < \gamma_{\mathrm{pos}}=2\sqrt{2} -2\approx 0.83$, we can still define a strong  solution of the SPDE \ref{e.DLSs} through a fixed point problem determined by the driving noise $\xi$. We do not have access to convergence $X_\eps \to X$ with our method in this regime.
\end{theorem}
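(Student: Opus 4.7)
The plan is to execute the Da Prato--Debussche splitting but exploit the positivity of the Wick exponential---which is a positive Gaussian multiplicative chaos measure, not a generic negative-regularity distribution. Write $X = \Phi + Y$ where $\Phi$ is the stationary solution of the linear stochastic heat equation driven by $\xi$; then formally
\begin{equation*}
(\partial_t - \tfrac{1}{4\pi}\Delta)\, Y = - e^{\gamma Y}\, M, \qquad Y(0,\cdot) = w,
\end{equation*}
where $M := \Wick{e^{\gamma \Phi}}$ denotes the space-time GMC measure obtained as the limit of $\Wick{e^{\gamma \Phi_\eps}}$, viewed as a random locally finite positive Borel measure on $[0,\infty)\times \T^2$. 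Reformulate this as the fixed-point problem
\begin{equation*}
Y = P_t w - \mathcal{G} \star (e^{\gamma Y} M) \;=:\; \Gamma(Y),
\end{equation*}
where $P_t$ is the heat semigroup for $\frac{1}{4\pi}\Delta$, $\mathcal{G}$ is the corresponding parabolic kernel, and $\star$ denotes space-time convolution. The key structural gain over the classical approach of Theorem \ref{th.mainT} is that, for $Y\in L^\infty$, the product $e^{\gamma Y} M$ is unambiguously a finite positive measure (no Bony paraproduct needed), so the Young condition that the Besov regularities of $e^{\gamma Y}$ and $M$ add to a positive number is replaced by the much weaker requirement $Y\in L^\infty$.

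The quantitative engine is the Besov regularity of the space-time GMC measure $M$ developed earlier in the paper: for every $\gamma<\gamma_{\mathrm{pos}}$ the measure $M$ lies in a parabolic Besov space $\Cs^{-\alpha_\gamma}$ with $\alpha_\gamma<2$. Parabolic Schauder then yields $\mathcal{G}\star M \in C([0,T]\times \T^2)$ almost surely, and---crucially for the contraction argument---$\|\mathcal{G}\star M\|_{L^\infty([0,T]\times \T^2)}\to 0$ as $T\to 0$, which follows either from the Besov bound combined with Schauder, or more elementarily by monotone convergence since $M$ is a locally finite positive measure. The threshold $\gamma_{\mathrm{pos}}=2\sqrt{2}-2$ is pinned down exactly by the condition $\alpha_\gamma < 2$; at this boundary the parabolic kernel can just barely produce a continuous function out of $M$.

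With these ingredients one closes a Banach fixed point in the ball $\mathcal{B}_R := \{Y \in C([0,T]\times \T^2) : \|Y\|_\infty \le R\}$, with $R$ chosen so that $\|P_{\cdot} w\|_\infty \le R/2$ and $T>0$ small, depending on the random realisations of $\Phi$ and $M$. The contraction estimate uses positivity pointwise:
\begin{equation*}
\bigl|e^{\gamma Y_1(z)} - e^{\gamma Y_2(z)}\bigr| \;\le\; \gamma\, e^{\gamma R}\, \|Y_1-Y_2\|_\infty,
\end{equation*}
which, multiplied by the positive measure $M$ and integrated against $\mathcal{G}$, gives
\begin{equation*}
\|\Gamma(Y_1)-\Gamma(Y_2)\|_\infty \;\le\; \gamma\, e^{\gamma R}\, \|\mathcal{G}\star M\|_{L^\infty([0,T]\times \T^2)}\, \|Y_1-Y_2\|_\infty.
\end{equation*}
Taking $T$ sufficiently small makes the prefactor less than $1/2$ almost surely, and the analogous bound gives stability $\Gamma(\mathcal{B}_R)\subset \mathcal{B}_R$. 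The unique fixed point $Y$ is measurable with respect to the noise $\xi$ since so are $\Phi$, $w$ and $M$, and $X := \Phi + Y$ is the desired strong solution of \eqref{e.DLSs}.

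The main obstacle, and the part that restricts the argument to $\gamma<\gamma_{\mathrm{pos}}$, is establishing the parabolic Besov regularity $M\in \Cs^{-\alpha_\gamma}$ with $\alpha_\gamma<2$ as $\gamma$ approaches $\gamma_{\mathrm{pos}}$. This requires upgrading the classical static GMC regularity bounds to the anisotropic space-time setting, which is carried out by adapting the wavelet/Besov technology of \cite{sine} and constitutes the technical heart of the paper. The reason the approximation statement $X_\eps \to X$ is not claimed is that at the mollified level the nonlinearity $e^{\gamma Y_\eps}\Wick{e^{\gamma \Phi_\eps}}$ is a product of two functions whose pointwise difference with $e^{\gamma Y} M$ does not admit a clean positivity bound: the naive telescoping $e^{\gamma Y_\eps}\Wick{e^{\gamma\Phi_\eps}} - e^{\gamma Y} M = (e^{\gamma Y_\eps} - e^{\gamma Y})\Wick{e^{\gamma\Phi_\eps}} + e^{\gamma Y}(\Wick{e^{\gamma\Phi_\eps}} - M)$ forces one to pair $e^{\gamma Y}$ with a signed remainder, which puts us back in the Young regime $\gamma < \gamma_{\mathrm{dPD}}$.
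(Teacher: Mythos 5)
Your proposal is correct and rests on the same two pillars as the paper's proof of Theorem \ref{th.main2}: the parabolic Besov regularity $\Theta=\Wick{e^{\gamma\Phi}}\in\calC_\s^\alpha$ for every $\alpha<\frac{\gamma^2}{2}-2\sqrt{2}\gamma$ (Proposition \ref{pr.reg}), whose crossing of the level $-2$ is exactly what produces $\gamma_{\mathrm{pos}}=2\sqrt{2}-2$, and the observation that positivity of the chaos measure allows multiplication by functions that are merely bounded rather than of complementary regularity. The implementation differs somewhat: the paper reruns the fixed point of Subsection \ref{ss.settle} in the H\"older ball of $\calC_\s^{\alpha+2-\kappa}(\Lambda_t)$, replacing Young's Theorem \ref{th.mult} by the bilinear estimate of Proposition \ref{pr.multPOS} (whose proof, like yours, really only uses $\|g\|_\infty$), together with the Schauder estimate of Proposition \ref{th.SchauderP} and Lemma \ref{l.contract}; you instead close the contraction directly in a sup-norm ball, using pointwise positivity of the heat kernel and of $M$, which is a little more elementary and avoids the H\"older--Lipschitz lemma for the exponential, while the smallness of the prefactor comes from $\|\mathcal{G}\star M\|_{L^\infty([0,T]\times\T^2)}\to 0$ rather than from the explicit $t^{\kappa/2}$ in the Schauder bound. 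Two small caveats. First, your aside that this smallness follows ``more elementarily by monotone convergence since $M$ is a locally finite positive measure'' is not correct on its own: a Dirac mass at a positive time already makes $\mathcal{G}\star M$ unbounded, so the smallness genuinely requires the quantitative small-ball bound $M(B_\s(z,r))\lesssim r^{4+\alpha}$ with $\alpha>-2$, i.e.\ the Besov regularity combined with a dyadic (Schauder-type) estimate on the kernel --- which is the route you also invoke, so nothing essential is lost. Second, to keep the iteration inside $C([0,T]\times\T^2)$ you should add the short equicontinuity argument showing that $\mathcal{G}\star(e^{\gamma Y}M)$ is continuous under that same small-ball bound (or simply close the fixed point in $L^\infty$). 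Your explanation for the absence of the convergence $X_\eps\to X$ matches the paper's: the positivity-based product admits no continuity estimate in the measure argument, so stability under $\Theta_\eps\to\Theta$ is out of reach by this method.
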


This will be proved in Section \ref{s.POS}. It can be argued that this threshold $\gamma_{\mathrm{pos}}$ corresponds to the critical parameter $\beta_c=\sqrt{8\pi}$ for Sine-Gordon. See the discussion in Section \ref{s.RS}. (It seems in particular that regularity structures used at arbitrary large order should not push local existence of solutions for values of $\gamma$ larger than $\gamma_{\mathrm{pos}}$). 

\smallskip

Finally,  we will get back to the original SPDEs \ref{e.DLT}, \ref{e.DLS} which was motivated by LQFT in Section \ref{s.Ext}. We will see that the $\alpha_i$-logarithmic singularities which arise near the punctures $\{x_i\}$ do have a significant impact on the underlying regularity. In particular, if the \textbf{coupling constants} $(\alpha_i)$ are chosen too large (yet satisfying the Seiberg bound $\alpha_i < Q$), then the regularity drops too low in order to settle a fixed point argument. Because of this, the Da Prato threshold $\gamma_{dPD}$ and the above $\gamma_{\mathrm{pos}}$ threshold need to be adapted accordingly. We will prove the following result below. 
See also Corollary \ref{C.CoupConst} for an explicit computation of these modified thresholds in the most relevant case (for LQG and embedding of planar maps) of $\alpha_i:=\gamma$. 

\begin{theorem}\label{th.mainP}
Let $\gamma< 2$ and $\{(\alpha_i,x_i)\}_i$ be punctures whose coupling constants $\alpha_i$  satisfy the Seiberg bounds. 
The main results above for the simplified SPDE~\eqref{e.DLSs} (i.e. Theorem \ref{th.mainT}, \ref{th.mainS} and Theorem \ref{th.main2}) also hold for the Liouville SPDEs  \eqref{e.DLT}, \eqref{e.DLS}:
\[
\begin{cases}
& \p_t X = \frac 1 {4\pi}  \Delta X - \frac 1 2 \mu \gamma  e^{\alpha_1 \gamma G_\tau(x_1,\cdot)} e^{\gamma X} +   \frac {\alpha_1} { 2\lambda_{\hat g_\tau} (\T^2)}  + \xi  \\
& X(0,\cdot) = \Phi(0) + w\,.
\end{cases}
\]
except the thresholds $\gamma_{dPD}$ and $\gamma_{\mathrm{pos}}$ in these theorems have to be modified as follows. Let us first introduce some notations. On the sphere $\S^2$, if $\{(x_i,\alpha_i)_i\}$ are the coupling parameters at the punctures (which satisfy Seiberg bounds), let $\hat \alpha:= \max_i \alpha_i$. On the torus, let $\hat \alpha:= \alpha_1$. Then, the following holds. 
\bnum
\item If $\gamma$ is chosen small enough so that 
\[
R(\hat \alpha,\gamma):= \bigl[(\frac \gamma {2\sqrt{2}} - \hat \alpha \gamma)\wedge 0 \bigr] + \g2 - 2\sqrt{2} \gamma > -1\,,
\]
then local existence as well as convergence as $\eps\to0$ hold as in Theorem \ref{th.mainT}. 
\item If $R(\hat \alpha, \gamma)>-2$, then we obtain with our methods only the local existence as in Theorem \ref{th.main2}. 
\margin{...which preserves $\P_{\tau=i,\mu}^{x_1,\alpha_1}$ HAS DROPPED, maybe this is too bad ?}
\enum 

%

\end{theorem}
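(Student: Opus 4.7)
The plan is to reduce Theorem \ref{th.mainP} to the simplified case (Theorems \ref{th.mainT}, \ref{th.mainS}, \ref{th.main2}) already proved, by tracking the extra singularities introduced by the puncture Green functions. Setting $X_\eps = \Phi_\eps + C\cdot t + Y_\eps$ where $\Phi_\eps$ is the stationary solution of the linear heat equation and $C$ absorbs the constant drift $\tfrac{\alpha_1}{2\lambda_{\hat g_\tau}(\T^2)}$ (respectively $\tfrac{\sum\alpha_i-2Q}{8\pi}$), the fixed point problem becomes
\begin{equation*}
Y_\eps(t) = P_t(Y(0)) - \tfrac{1}{2}\mu\gamma \int_0^t P_{t-s}\!\left( W_\eps(z)\, \Wick{e^{\gamma \Phi_\eps(s,z)}}\, e^{\gamma(Cs + Y_\eps(s,z))}\right) ds,
\end{equation*}
where $W_\eps(z) \to W(z) := e^{\sum_i \alpha_i\gamma G(x_i,z)}$ is the deterministic weight coming from the Cameron-Martin shift. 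The extra drift $Ct$ is smooth, so the substantive new object is the weighted space-time chaos $W(z)\Wick{e^{\gamma\Phi}}$.

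First I would upgrade the Besov regularity result for space-time GMC already established in the paper to this weighted case. Since $G(x_i,z) = -\log|z-x_i| + O(1)$ near each puncture, the weight $W$ is locally of order $|z-x_i|^{-\alpha_i\gamma}$, smooth away from the punctures, and bounded below by a positive constant. Away from the $x_i$'s the Besov analysis of the unweighted chaos applies verbatim; the only issue is the pointwise behaviour of test-function pairings concentrated near a puncture. For a ball $B(x_i,\lambda)$ one has the heuristic $\<W \Wick{e^{\gamma\Phi}}, \mathbf{1}_{B(x_i,\lambda)}\> \sim \lambda^{-\alpha_i\gamma}\<\Wick{e^{\gamma\Phi}},\mathbf{1}_{B(x_i,\lambda)}\>$. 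Combining this with the Besov scaling of the chaos produces the correction term $-\hat\alpha\gamma$ in the exponent, truncated above by the intrinsic Hölder improvement $\gamma/(2\sqrt{2})$ of the chaos: this is exactly what is encoded by $[(\gamma/(2\sqrt{2}) - \hat\alpha\gamma)\wedge 0]$ in the definition of $R(\hat\alpha,\gamma)$. The additional $\tfrac{\gamma^2}{2} - 2\sqrt{2}\gamma$ is the exponent already obtained in the unweighted Besov analysis of the paper.

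Once this estimate on the parabolic Besov regularity of $W_\eps\Wick{e^{\gamma\Phi_\eps}}$ is in hand, the argument runs parallel to the simplified SPDE. When $R(\hat\alpha,\gamma)>-1$, the driving term lives in a Besov space of regularity strictly greater than $-1$, so (by the Schauder estimate used in Theorems \ref{th.mainT} and \ref{th.mainS}) the parabolic convolution gains two derivatives and produces $Y_\eps$ of positive Hölder regularity. The multiplication $Y_\eps \mapsto e^{\gamma Y_\eps}$ is then locally Lipschitz on any bounded subset of this Hölder space, so the Banach fixed point theorem gives local existence, uniqueness, and convergence of $X_\eps\to X$ on a random time interval. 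When only $R(\hat\alpha,\gamma)>-2$, the driving term lives in a space of regularity $> -2$; the parabolic convolution then still produces a $Y$ of positive regularity, but the multiplication step requires the positivity of the weighted chaos to close the fixed point, exactly as in the proof of Theorem \ref{th.main2}.

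The main obstacle I anticipate is the first of these steps, namely quantifying how the multiplicative singular weight interacts with the intermittent Besov regularity of the GMC measure. The multi-fractal nature of $\Wick{e^{\gamma\Phi}}$ (discussed in Subsection \ref{ss.intermit}) means one cannot just invoke an off-the-shelf product rule such as Bony's paraproduct estimates: the regularity of the chaos at a puncture $x_i$ may be anomalous because of thick points, and the appearance of the truncation $\wedge 0$ in $R$ is the signature that one needs to split into two regimes. The cleanest way is to localize via a partition of unity around each $x_i$, bound $W\Wick{e^{\gamma\Phi}}$ in Besov norm on each annulus $\{2^{-k-1}\leq |z-x_i|\leq 2^{-k}\}$ by scaling, and then sum in $k$; the threshold $R(\hat\alpha,\gamma) > -1$ (resp.\ $> -2$) is exactly the condition under which this dyadic sum converges. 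The explicit form of the modified thresholds in Corollary \ref{C.CoupConst} for the physically relevant choice $\alpha_i = \gamma$ then follows by substitution.
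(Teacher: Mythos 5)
Your overall reduction is the same as the paper's: isolate the weighted space-time chaos $e^{\sum_i\alpha_i\gamma G(x_i,\cdot)}\Wick{e^{\gamma\Phi}}$, prove it lies in $\calC_\s^\alpha$ for $\alpha$ up to $R(\hat\alpha,\gamma)$ (plus convergence of the $\eps$-regularisations when $\gamma<2$), and then rerun the fixed point of Proposition \ref{pr.FP}, resp.\ the positivity argument of Section \ref{s.POS}, which is exactly where the thresholds $-1$ and $-2$ come from. However, the one step you flag as the main obstacle is precisely where your argument has a genuine gap: your mechanism for producing the exponent $R(\hat\alpha,\gamma)$ does not work as described. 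Localizing on dyadic annuli $\{|x-x_i|\sim 2^{-k}\}$ and multiplying the weight $\sim 2^{k\hat\alpha\gamma}$ by the \emph{unweighted} Besov scaling of the chaos only yields the naive exponent $\g2-2\sqrt{2}\gamma-\hat\alpha\gamma$, which is strictly worse than $R(\hat\alpha,\gamma)=\bigl[(\frac{\gamma}{2\sqrt2}-\hat\alpha\gamma)\wedge 0\bigr]+\g2-2\sqrt2\gamma$; the gain of $\min(\frac{\gamma}{2\sqrt2},\hat\alpha\gamma)$ is not an ``intrinsic H\"older improvement'' one can quote, and the thresholds $R>-1$, $R>-2$ are not convergence conditions for your dyadic sum (they come from the multiplication/Schauder requirements in the fixed point).

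What is actually needed, and what the paper proves, is that the chaos is \emph{atypically small near the time-line} $\R_{x_i}$, in two separate estimates. First, for balls centred on the line, a modified multifractal moment bound (Proposition \ref{pr.momR}, via Lemma \ref{l.T0R}): the Kahane-convexity scaling argument is redone with a one-dimensional array of $\delta^{-2}$ parabolic boxes along the line, giving the exponent $\xi_\R(q)=\g2(q-q^2)+(4-\hat\alpha\gamma)q$ together with the extra constraint $\xi_\R(q)-2>0$ (this is why the Seiberg bound alone is not always enough). Second, for $r$-balls at distance $\delta\geq r$ from the line (Lemma \ref{l.distR}), a union bound exploiting that there are only $O(\delta r^{-4})$ such balls — rather than $r^{-4}$ — combined with the Kolmogorov-optimal choice $q=\hat q=\frac{2\sqrt2}{\gamma}$, produces the extra factor $\delta^{\tilde\beta}$ with any $\tilde\beta<\frac{1}{\hat q}=\frac{\gamma}{2\sqrt2}$; the weighted mass is then $\lesssim\delta^{\tilde\beta-\hat\alpha\gamma}r^{\bar\beta}$, and comparing the regimes $\delta\sim r$ and $\delta\sim 1$ is exactly what generates the truncation $\bigl[(\frac{\gamma}{2\sqrt2}-\hat\alpha\gamma)\wedge 0\bigr]$ in Proposition \ref{pr.RegLine}. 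Without these two probabilistic inputs your proof would only reach the weaker threshold above, so the stated form of $R(\hat\alpha,\gamma)$ (and hence Corollary \ref{C.CoupConst}) is not justified by your argument.
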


\paragraph{\bf Acknowledgments}

I wish to thank the three anonymous referees for many helpful comments which helped improving the paper.
I wish to thank Nathanael Berestycki, Charles-Edouard Bréhier, Louis Dupaigne, Francesco Fanelli, Ivan Gentil, Martin Hairer, Dragos Iftimie, Antti Kupiainen, Claudio Landim, Rémi Rhodes, Hao Shen, Fabio Toninelli,  Nikolay Tzvetkov, Vincent Vargas, Julien Vovelle for  inspiring discussions  or important comments on the manuscript and  Jean-Christophe Mourrat for very stimulating discussions at several stages of this work.  The author is partially supported by the ANR grant Liouville ANR-15-CE40-0013 and the ERC grant 676999-LiKo. 

\section{The case of the torus $\T^2$}\label{s.TORUS}

In this section, we will restrict ourselves to the case of the two-dimensional torus $\T^2$.

\subsection{Da Prato-Debussche approach}\label{ss.dPD}

It is known since the breakthrough paper \cite{dPD} (see also the recent surveys on the topic \cite{Hendrik, ICM, Braz}) that in order to give a meaning (after suitable renormlisation) to such SPDEs, it is very convenient to make a first-order expansion around the solution of the linearised equation $\p_t \Phi = \frac 1 {4\pi} \Delta \Phi + \xi$ (the stochastic heat equation) as follows:
\begin{align*}\label{}
X(t,x) := \Phi(t,x) + v(t,x)\,.
\end{align*}
Note that higher order such expansions are at the root of the theory of {\em Regularity structures} in \cite{HairerReg}. By a simple time-change, one may as well consider the following SPDE which has the advantage to be related to the {\em standard} heat kernel. 
\begin{align*}\label{}
\p_t \Phi = \frac 1 {2} \Delta \Phi + \sqrt{2\pi}\xi
\end{align*}
By {\em Duhamel's principle}, the solution of this heat-equation
is given by $\Phi = \sqrt{2\pi} \tilde K* \xi$ where $\tilde K$ is the heat-kernel on the torus.  
As explained for example in \cite{sine}, it turns out to be more convenient to expand at first order around an approximate solution of the heat-equation given by 
\begin{align}\label{e.K}
\Phi:= \sqrt{2\pi} \, K * \xi
\end{align}
where $K$ is a compactly supported (in space-time) kernel which coincides with the heat-kernel $p_t^{\T^2}(x,y)$ in the neighbourhood of $t=0$. This technical step will be even more important once on the sphere $\S^2$ in Section \ref{s.sphere} as it will avoid us dealing with subtle {\em cut-locus} issues. See Proposition \ref{pr.nag} and below. With this definition of $\Phi$, one easily check (see \cite{sine}) the existence of a smooth correction $R(t,z)$ such that 
\begin{align*}\label{}
\p_t \Phi = \frac 1 2 \Delta \Phi + R + \sqrt{2\pi}\xi
\end{align*}
\margin{In sine paper: p20,  we will use the symbol R to denote a generic such function which can possibly change from one line to the next.}

Following \cite{dPD}, if one formally plugs $X=\Phi + v$ into our simplified SPDE $\p_t X = \frac 1 2 \Delta X - e^{\gamma X} + \sqrt{2\pi} \xi$, we end up with the equation 
\begin{align}\label{e.dPD}
\begin{cases}
& \p_t v = \frac 1 2 \Delta v  - e^{\gamma \Phi} e^{\gamma v} - R \\
& v(t=0,x)= X(t=0,x)-\Phi(t=0,x)
\end{cases}
\end{align}
As in \cite{dPD} and works since then, we will solve this SPDE via a fixed point argument in an appropriate Banach space.


\subsection{Space-time Besov spaces}\label{ss.Besov}
Let us briefly introduce the functional framework we shall use. We follow very closely here \cite{sine, Hendrik, ICM} to which we refer for a more detailed exposition.
Space-time points will be denoted by $z=(t,x)$ and we
will use throughout the following parabolic space-time distance on $\R \times \R^d$. 
\begin{align}\label{e.ds}
\|z\|_\s := \|(t,x)\|_\s = |t|^{1/2} + \| x\|_2
\end{align}
This distance induces a parabolic distance on $\R \times \T^d$. (See Subsection \ref{ss.setupS} below for the case of the sphere.)

Let us introduce some notation. For any integer $m\geq 1$, let $B_m$ be the space of smooth test functions $f : \R \times \T^d \to \R$ which are supported on the unit ball around $(0,0)$ for the parabolic distance $\|\cdot\|_\s$ and satisfy 
\[
\| f \|_{C^m} := \sup_{\beta, |\beta|_{\s}\leq m} \sup_{z\in \R \times \T^d} |D^\beta f(z)| \,\, \leq 1 \,.
\]
Following \cite{HairerReg, Hendrik, ICM}, we introduce the following \textbf{space-time Besov spaces} of negative regularity.
\begin{definition}\label{d.Balpha}
For any $\alpha<0$, let $\calC_\s^\alpha$ be the space of distributions $\eta$ on $\R\times \T^d$ s.t. for any $T>0$, 
\begin{align}\label{e.Balpha}
\|\eta \|_{\calC^\alpha_\s(T)}:= \sup_{z\in [-T,T]\times \T^d}\sup_{f \in B_m} \sup_{\lambda \in(0,1]} \left| 
\frac
{\<{\eta, S_z^\lambda f}}
{\lambda^\alpha}
\right|<\infty
\end{align}
where $m:=\ceil{-\alpha}$ and if $z=(t,x)$, 
\[
S_z^\lambda f(s,y) := \lambda^{-d-2} f(\lambda^{-2}(s-t), \lambda^{-1}(y-x))
\]
These spaces correspond to the more standard notation $B^\alpha_{\infty,\infty}$ with the additional fact that we are in space/time here. 
\end{definition}


For positive regularity $\beta\in(0,1)$, we will use the following classical definition of $\calC_\s^\beta$.

\begin{definition}\label{d.Bbeta}
For any positive regularity $\beta\in (0,1)$, let  $\calC_\s^\beta$ be the space of functions $f$ on $\R \times \T^2$, such that
\begin{align*}\label{}
\|f\|_{\calC_\s^\beta}& := \| f\|_{\infty} + \sup_{0<\|z-z'\|_\s\leq 1} \frac{|f(z)-f(z')|}{\|z-z'\|_\s^\beta} <\infty 
\end{align*}
There are several other equivalent ways to define $\calC_\s^\beta$. See for example Remark 2.16 in \cite{tightness}. See \cite{Hendrik, ICM} for equivalent definitions more in the spirit of~\eqref{e.Balpha}.
\end{definition}


Finally, for any fixed $t>0$ and $\alpha\in (-\infty,0) \cup (0,1)$, let  $\calC^{\alpha}_\s(\Lambda_t)$ be the Besov space introduced in Definitions \ref{d.Balpha}, \ref{d.Bbeta} on $\Lambda_t:=[0,t]\times \T^2$. 
\medskip

\subsection{Multiplying distributions}
The above functional setup is very convenient to multiply distributions which do not have pointwise values. The key result of this flavour is the following Theorem.   

\begin{theorem}[Thm 2.52 in \cite{Chemin} or Prop 4.11 in \cite{HairerReg}]\label{th.mult}
Suppose $\alpha+\beta>0$, then there exists a  bilinear form $B(\cdot, \cdot): \calC_\s^\alpha \times \calC_\s^\beta \to \calC_\s^{\alpha\wedge \beta}$ satisfying 
\bi
\item[i)] $B(f,g)$ coincides with the classical product when $f,g$ are smooth. 
\item[ii)] There exists $C>0$ s.t. for any $f\in \calC_\s^\alpha, g\in \calC_\s^\beta$, 
\[
\|B(f,g)\|_{\calC_\s^{\alpha \wedge \beta}} \leq C \| f \|_{\calC_\s^\alpha} \| g \|_{\calC_\s^\beta}
\]
\ei
N.B. The criterion $\alpha+\beta >0$ is optimal: such a bilinear map does not exist if $\alpha + \beta \leq 0$. 
\end{theorem}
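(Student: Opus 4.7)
The plan is to build $B$ via a parabolic Littlewood–Paley / paraproduct decomposition. First I would introduce a dyadic Fourier decomposition adapted to the scaling $\s$: smooth non-negative cutoffs $\chi,\psi$ with $\psi$ supported on a parabolic annulus and $\chi+\sum_{k\geq 0}\psi(2^{-k\cdot \s}\,\cdot)\equiv 1$, giving blocks $\Delta_{-1}:=\chi(D)$ and $\Delta_k:=\psi(2^{-k\cdot \s}D)$ for $k\geq 0$. A standard consequence of the test-function definition~\eqref{e.Balpha} (obtained by testing against $S^{2^{-k}}_{z} f$ for appropriately chosen Schwartz $f$) is the equivalence
\[
\|\eta\|_{\mathcal{C}_\s^\alpha}\;\asymp\;\sup_{k\geq -1} 2^{k\alpha}\|\Delta_k \eta\|_\infty \qquad (\alpha<0),
\]
and an analogous equivalence based on finite differences when $\alpha\in(0,1)$. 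This reduces everything to sup-norm estimates on dyadic blocks.

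Next, for smooth $f,g$ I would use Bony's decomposition
\[
fg \;=\; T_f g + T_g f + \pi(f,g),\qquad T_f g:=\sum_{j\leq k-2}\Delta_j f\cdot \Delta_k g,\qquad \pi(f,g):=\sum_{|j-k|\leq 1}\Delta_j f\cdot\Delta_k g,
\]
and estimate the three pieces separately. Each slice $\Delta_j f\cdot \Delta_k g$ in $T_fg$ is parabolically frequency-localized in an annulus of size $2^k$, and summing in $j\leq k-2$ one gets a block-localized object of sup-norm $\lesssim \|f\|_{\mathcal{C}_\s^\alpha}\|g\|_{\mathcal{C}_\s^\beta}\,2^{-k\beta}\sum_{j\leq k-2}2^{-j\alpha}$. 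For $\alpha<0$ the $j$-sum is $\lesssim 2^{-k\alpha}$, yielding $T_f g\in \mathcal{C}_\s^{\alpha+\beta}$; for $\alpha\geq 0$ it is $O(1)$ and gives $T_f g\in \mathcal{C}_\s^\beta$. By symmetry $T_g f\in \mathcal{C}_\s^{(\alpha+\beta)\wedge \alpha}$. Neither estimate used $\alpha+\beta>0$.

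Third, the resonant term is the only one requiring the hypothesis. Its $k$-th slice has frequency $\lesssim 2^k$ and sup-norm $\lesssim \|f\|_{\mathcal{C}_\s^\alpha}\|g\|_{\mathcal{C}_\s^\beta}\,2^{-k(\alpha+\beta)}$, so one can sum the series in $\mathcal{C}_\s^{\alpha+\beta}$ precisely because $\alpha+\beta>0$. Combining the three contributions gives regularity $\min(\alpha+\beta,\alpha,\beta)=\alpha\wedge\beta$ under our hypothesis, and bilinearity and the bound (ii) follow at once.

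Finally, to upgrade from smooth data to distributional inputs, I would show that the truncated bilinear maps $B_N(f,g):=\sum_{j,k\leq N}\Delta_j f\cdot \Delta_k g$ are Cauchy in $\mathcal{C}_\s^{\alpha\wedge\beta}$ with uniform bound (ii), and define $B(f,g):=\lim_N B_N(f,g)$; the limit is unique by density of smooth functions in the topology in which the bound holds, and property (i) is automatic since the truncated sums telescope to the classical product when $f,g$ are smooth. The main technical obstacle is the careful parabolic frequency-localization bookkeeping — verifying that products of dyadic blocks remain localized at the correct scale and that the equivalence between the test-function Besov norm~\eqref{e.Balpha} and the Littlewood–Paley norm holds in the space–time parabolic setting on $\R\times \T^d$. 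Optimality of $\alpha+\beta>0$ can be seen by a lacunary counterexample $f=\sum_k 2^{-k\alpha}e^{i2^k x}$, $g=\overline{f}$, whose resonant term contains a divergent diagonal contribution when $\alpha+\beta\leq 0$.
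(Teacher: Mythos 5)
Your proposal is correct and is essentially the proof the paper implicitly relies on: the statement is quoted from \cite{Chemin} (Thm 2.52) and \cite{HairerReg} (Prop 4.11) without proof, and your Bony decomposition with a parabolic Littlewood--Paley calculus, with the hypothesis $\alpha+\beta>0$ used exactly and only to sum the resonant term, is the standard argument of those references adapted to the scaling $\s$. One small caveat: smooth functions are \emph{not} dense in $\calC_\s^\alpha$ (these are non-separable $B^\alpha_{\infty,\infty}$-type spaces), so your ``uniqueness by density'' remark does not literally apply, but this is harmless since the theorem only asserts existence of a bilinear form with properties (i)--(ii), which your direct paraproduct construction provides.
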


This result shows that one can at least start Picard's iteration scheme if our rough object $e^{\gamma \Phi}$ is of high enough regularity.

\subsection{Parabolic Schauder's estimate}

The point of a parabolic Schauder's estimate is to quantify the fact that the solution $v(t,x)$ to a linear heat-equation
\begin{align}\label{e.DUH}
\begin{cases}
&\p_t v = \frac 1 2 \Delta v + f \\
& v(t=0, x)= v_0(x)
\end{cases}
\end{align}
is ``two units more regular'' than the driving space-time function $f$. There are many ways to quantify this phenomenon. We will measure this gain of regularity using the above space-time Besov spaces $\calC_\s^\alpha$. It will be convenient to decompose  Duhamel's principle into two operators $K$ and $P$. Recall that the solution to the heat equation~\eqref{e.DUH} is given by 
\[
v(s,x)= \int_0^s e^{\frac u 2 \Delta}f(s-u,x) du + e^{\frac s 2 \Delta} v_0
\]
which motivates the introduction of the operators $K$ and $P$ as follows. 
\begin{definition}\label{d.K}
For any fixed $t>0$ and any $f\in \Cs^\infty(\Lambda_t)$, let $K=K_{(t)}$ be the operator  
\begin{align*}\label{}
K(f)(s,x):= \int_0^s e^{\frac u 2 \Delta}f(s-u,x)du\,.
\end{align*}
\end{definition}

\begin{definition}\label{d.P}
For any $v_0$ a continuous function in $\calC(\T^d)$, let $P(v_0)$ be the space-time function 
\[
P(v_0) : (s,x) \mapsto e^{\frac s 2 \Delta} v_0\,.
\]
\end{definition}
The solution $(s,x) \mapsto v(s,x)$ to the heat-equation~\eqref{e.ds} can thus be written 
\begin{align*}\label{}
v = K(f) + P(v_0)
\end{align*}


We will rely on the following parabolic Schauder estimate.

\begin{proposition}[Parabolic Schauder estimate]\label{th.SchauderP}
For  any $\alpha<0$ and any small $\kappa>0$ s.t. 
$\alpha+2 -\kappa \in (0,1)$, there exists $C>0$ 
 such that for any $0<t<1$, 
\[
\| K(f) \|_{\Cs^{\alpha + 2 - \kappa}(\Lambda_t)} \leq C t^{\kappa/2} \| f \|_{\Cs^\alpha(\Lambda_t)}
\]
\end{proposition}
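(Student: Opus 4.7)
The plan is to represent $K(f)(z)$ as a pairing $\langle f, \Psi_z\rangle$ and then to dyadically decompose the testing kernel $\Psi_z$ according to the parabolic distance from $z$. For $z=(s,x)\in\Lambda_t$, set $\Psi_z(u',y) := p_{s-u'}(x-y)\,\mathbf{1}_{[0,s]}(u')$ where $p_u$ is the standard heat kernel on $\T^d$. Choose a partition of unity $(\chi_n)_{n\ge 0}$ on $\R\times\T^d$ adapted to $\|z-\cdot\|_\s$ at the dyadic scales $\lambda_n = 2^{-n}$, and set $\Psi_z^{(n)} := \chi_n\Psi_z$. Gaussian estimates on $p_u$ and its space-time derivatives give, for a universal constant $C$ and a family $\psi_{n,z}$ uniformly bounded in $B_m$,
\[
\Psi_z^{(n)} \,=\, C\,\lambda_n^{2}\, S_z^{\lambda_n}\psi_{n,z},
\]
the factor $\lambda_n^{2}$ reflecting the parabolic-$L^1$ mass carried by $\Psi_z$ on the shell of radius $\lambda_n$. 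Since $\Psi_z$ is supported in $u'\in[0,s]$, only scales $\lambda_n\lesssim\sqrt{s}\le\sqrt{t}$ contribute.

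The sup-norm bound is then immediate: the Besov testing inequality gives $|\langle f,\Psi_z^{(n)}\rangle|\lesssim\lambda_n^{\alpha+2}\|f\|_{\Cs^\alpha(\Lambda_t)}$, and since $\alpha+2>0$ the geometric sum over $\lambda_n\lesssim\sqrt{t}$ yields $\|K(f)\|_{L^\infty(\Lambda_t)}\lesssim t^{(\alpha+2)/2}\|f\|_{\Cs^\alpha(\Lambda_t)}$. For the Hölder seminorm, fix $z,z'\in\Lambda_t$ and set $\delta:=\|z-z'\|_\s$. If $\delta\ge\sqrt{t}$, the previous bound gives $|K(f)(z)-K(f)(z')|\le 2\|K(f)\|_\infty\lesssim t^{(\alpha+2)/2}\le t^{\kappa/2}\delta^{\alpha+2-\kappa}$, using $\alpha+2-\kappa>0$. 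If $\delta<\sqrt{t}$, telescope
\[
K(f)(z)-K(f)(z') \,=\, \sum_{n\ge 0} \langle f,\, \Psi_z^{(n)}-\Psi_{z'}^{(n)}\rangle
\]
and split at the critical scale $\lambda_n\sim\delta$. For $\lambda_n\le\delta$, bound each summand crudely by $2C\lambda_n^{\alpha+2}$; for $\lambda_n>\delta$, a first-order (parabolic) Taylor expansion of the heat kernel combined with the smoothness of $\chi_n$ expresses $\Psi_z^{(n)}-\Psi_{z'}^{(n)}$ as $(\delta/\lambda_n)\,\lambda_n^{2}\,S_z^{\lambda_n}\tilde\psi_{n,z,z'}$ with $\tilde\psi_{n,z,z'}$ still uniformly in $B_m$, giving a contribution of size $(\delta/\lambda_n)\lambda_n^{\alpha+2}\|f\|_{\Cs^\alpha(\Lambda_t)}$. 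Summing both pieces, using $\delta\le\sqrt{t}$ and the inequality $\alpha+1<\kappa$ (which is exactly equivalent to the hypothesis $\alpha+2-\kappa<1$) to control both the geometric sums in regimes $\alpha+1\lessgtr 0$, produces an overall bound $\delta^{\alpha+2-\kappa}t^{\kappa/2}\|f\|_{\Cs^\alpha(\Lambda_t)}$, which combines with the $\delta\ge\sqrt{t}$ case to yield the claimed estimate.

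The main technical step is the Taylor expansion at scales $\lambda_n>\delta$: one must verify that $\Psi_z^{(n)}-\Psi_{z'}^{(n)}$ really does decompose as an $L^1$-normalised bump at scale $\lambda_n$ times an extra prefactor $\delta/\lambda_n$, uniformly in $n$, $z$ and $z'$. This follows from the Gaussian derivative bounds $|\partial_y^\beta p_u(y)|\lesssim u^{-(d+|\beta|)/2}e^{-c|y|^2/u}$ together with a time-increment estimate for $p_u$; the only mild subtlety is near the endpoints $u'=0$ and $u'=s$, where the indicator $\mathbf{1}_{[0,s]}$ is discontinuous, but those boundary contributions only affect the single coarsest scale $\lambda_n\sim\sqrt{s}$ and are already absorbed by the $L^\infty$ estimate obtained in Step~1. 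The $t^{\kappa/2}$ gain at the end is pure book-keeping: it is precisely the slack one gets from demanding $\alpha+2-\kappa$ regularity instead of the optimal $\alpha+2$, exploited through the cutoff $\lambda_n\le\sqrt{t}$.
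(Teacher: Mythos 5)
Your argument is essentially the ``right'' one, but note how the paper itself handles this statement: it gives no proof at all. It observes that a Littlewood--Paley proof attempted in an earlier version was flawed (those techniques being awkward in the parabolic scaling) and defers to Section~14.3 of the Friz--Hairer book, which proves Schauder estimates precisely by the kind of dyadic kernel decomposition you use: writing the heat kernel as a sum of bumps at parabolic scale $\lambda_n=2^{-n}$, each carrying mass $\lambda_n^{2}$, testing them against $f$ via the defining inequality of $\Cs^\alpha$, summing $\lambda_n^{\alpha+2}$ for the sup bound, and splitting at the scale $\delta=\|z-z'\|_\s$ with a first-order kernel-increment estimate at the coarse scales for the H\"older bound. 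Your bookkeeping of the exponents is correct: $\alpha+2>0$ makes the fine-scale sums converge, the hypothesis $\alpha+2-\kappa<1$ (i.e.\ $\kappa>\alpha+1$) is exactly what controls the coarse-scale sum in both regimes $\alpha+1\lessgtr0$, and the restriction of the relevant scales to $\lambda_n\lesssim\sqrt{t}$ produces the $t^{\kappa/2}$ gain, with the $\delta\geq\sqrt t$ case reduced to the sup bound. So your route is the one of the reference the paper leans on, not a genuinely different one.

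Two points need repair, one of them substantive. First, ``only scales $\lambda_n\lesssim\sqrt s$ contribute'' is not literally true: the heat kernel has no compact spatial support, so the shells with $\lambda_n\in(\sqrt s,1]$ do contribute; this is harmless because the Gaussian factor gives an extra $e^{-c\lambda_n^2/s}$ there, and the sums are still dominated by the scale $\sqrt s$, but it should be said. Second, and more seriously, your treatment of the truncation $\mathbf{1}_{[0,s]}(u')$ is circular as written. The endpoint $u'=s$ is in fact harmless (away from $z$ the kernel extends smoothly by zero across $u'=s$), but the jump at $u'=0$ sits inside the dyadic pieces with $\lambda_n\gtrsim\sqrt s$, so for those $n$ the rescaled profile $\psi_{n,z}$ is \emph{not} an element of $B_m$: a distribution of negative regularity cannot simply be tested against a function with a jump, so the pairing $\langle f,\Psi_z^{(n)}\rangle$ for those pieces is not controlled by $\|f\|_{\Cs^\alpha}$ -- and it is exactly these pieces that your ``Step~1'' was supposed to bound, so saying they are ``absorbed by the $L^\infty$ estimate'' begs the question. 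The standard fix is one more dyadic decomposition, in the parabolic distance $\mu$ to the hyperplane $\{u'=0\}$: the part of the coarse piece living at distance $\sim\mu$ is smooth, carries mass $O(\mu^2)$ (time-thickness $\mu^2$, spatial extent $\sqrt s$, sup of order $s^{-d/2}$), and can be written as a superposition of admissible bumps at scale $\mu$, so its pairing with $f$ is $O(\mu^{\alpha+2}\|f\|_{\Cs^\alpha})$ and the sum over $\mu\le\sqrt s$ again converges because $\alpha+2>0$, contributing $O(s^{(\alpha+2)/2})$ as needed (and similarly for the difference estimate, where the mismatch of the two truncations lives in a slab of width $|s-s'|\le\delta^2$). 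With that extra step, and the Gaussian-tail remark above, your proof closes.
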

We did not find a proper reference for this estimate  which is implicitly used in several works such as \cite{sine} or \cite{Hendrik}. (The proof based on Littlewood-Paley in the version 1 of this paper had a mistake and it appears that Littlewood-Paley techniques are not convenient to handle the parabolic case). The best place in the literature is Section 14.3 on Schauder estimates in the book \cite{BookHairer} in which essentially contains a proof of this estimate. 

We shall also need the following simple Lemma
\begin{lemma}\label{l.simple}
Let $\alpha<0$ and $\kappa>0$ s.t. $\alpha+2-\kappa\in (0,1)$. There exists $C>0$ s.t. for any  initial condition $v_0$ in $\calC^{\alpha+2-\kappa}(\T^2)$
\[
\| P(v_0) \|_{\Cs^{\alpha+2-\kappa}(\Lambda_1)} \leq C \|v_0\|_{\calC^{\alpha+2-\kappa}(\Lambda_1)}
\]
\end{lemma}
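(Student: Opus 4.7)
My plan is to unpack the parabolic Hölder norm $\Cs^\beta$ (where $\beta := \alpha+2-\kappa \in (0,1)$) into three ingredients via its Definition \ref{d.Bbeta}: the sup norm of $P(v_0)$, its spatial Hölder seminorm at each fixed time slice, and its temporal Hölder seminorm along each spatial fiber. I will then combine the spatial and temporal estimates into the full parabolic seminorm by a triangle inequality, using the fact that $(\|x-y\|_2 + |s-t|^{1/2})^\beta \leq 2^\beta \|z-z'\|_\s^\beta$.

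For the sup-norm estimate, the heat semigroup $e^{s\Delta/2}$ is a contraction on $L^\infty(\T^2)$ (the kernel $p_s^{\T^2}$ is a probability density), so $\|P(v_0)\|_\infty \leq \|v_0\|_\infty \leq \|v_0\|_{\calC^\beta}$. For the spatial Hölder estimate, I will use the classical fact that $e^{s\Delta/2}$ is a bounded operator on the spatial Hölder space $\calC^\beta(\T^2)$, uniformly in $s \geq 0$: indeed, for fixed $s$, $|e^{s\Delta/2} v_0(x) - e^{s\Delta/2} v_0(y)| \leq \int p_s^{\T^2}(x,\cdot) \otimes p_s^{\T^2}(y,\cdot)$ arguments (or a coupling: write $e^{s\Delta/2}v_0(x) - e^{s\Delta/2}v_0(y) = \int p_s^{\T^2}(0,w)[v_0(x+w)-v_0(y+w)]\,dw$) immediately yield the Hölder seminorm bound $[v_0]_{\calC^\beta} \|x-y\|_2^\beta$.

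The main step is the temporal regularity. For $0 \leq t < s \leq 1$, I invoke the semigroup property $e^{s\Delta/2} = e^{(s-t)\Delta/2} \circ e^{t\Delta/2}$ to write
\[
P(v_0)(s,x) - P(v_0)(t,x) = \int_{\T^2} p_{s-t}^{\T^2}(x,y) \bigl[ e^{t\Delta/2}v_0(y) - e^{t\Delta/2}v_0(x)\bigr]\, dy,
\]
using that $p_{s-t}^{\T^2}(x,\cdot)$ is a probability measure. Setting $u := e^{t\Delta/2}v_0$, the spatial estimate above gives $[u]_{\calC^\beta} \leq C \|v_0\|_{\calC^\beta}$ uniformly in $t\in[0,1]$, hence
\[
|P(v_0)(s,x) - P(v_0)(t,x)| \leq [u]_{\calC^\beta} \int p_{s-t}^{\T^2}(x,y) \|x-y\|_2^\beta \, dy \leq C \|v_0\|_{\calC^\beta} (s-t)^{\beta/2},
\]
where the last inequality is the standard moment bound for the heat kernel on the torus (the $\beta$-th moment of a Gaussian of variance $s-t$ is of order $(s-t)^{\beta/2}$; for $s-t$ of order $1$ this uses compactness of $\T^2$, but $0<s-t\leq 1$ is all we need).

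The obstacle, if any, is purely bookkeeping: making sure the constant $C$ can be taken uniform in $s,t \in [0,1]$ and in the space-time scale $\|z-z'\|_\s \leq 1$ used in Definition \ref{d.Bbeta}. For small $\|z-z'\|_\s$ the argument is local and identical to the $\R^2$ case; for $\|z-z'\|_\s$ of order 1 (but still $\leq 1$), the compactness of $\T^2$ and the uniform bound on the heat kernel's first few moments give the result. Putting the three pieces together yields the claimed inequality $\|P(v_0)\|_{\Cs^\beta(\Lambda_1)} \leq C \|v_0\|_{\calC^\beta(\T^2)}$.
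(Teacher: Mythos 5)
Your proof is correct and takes essentially the same route as the paper's: both arguments rest on the heat semigroup being a contraction on spatial H\"older spaces together with a $\beta$-moment (Jensen) bound for the heat kernel, which produces the $|s-t|^{\beta/2}$ modulus in time. The paper merely packages the two steps into a single estimate by writing $P_t(f)(x)-P_s(f)(y)=\E\bigl[f(x+Z_s+Z_{t-s})-f(y+Z_s)\bigr]$ for a Brownian motion $Z$ and applying Jensen, whereas you split space and time via the semigroup property and a triangle inequality --- a cosmetic reorganization of the same computation.
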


\ni
{\em Proof:}
It follows from the fact that the heat kernel induces a contraction on Hölder spaces. Indeed, taking $\beta:=\alpha+2-\kappa\in (0,1)$, 
\begin{align*}\label{}
|P_t(f)(x) - P_t(f)(y)|
& = |\Eb{f(x+Z_t) - f(y+Z_t)}| \\
& \leq \|f\|_\beta \|x-y\|^\beta\,,
\end{align*}
(where $(Z_t)_{t\geq 0}$ is a Brownian motion on $\R^2$ and one thinks of $f$ as the periodic extension of $v_0$ to $\R^2$).
To prove the Lemma it is sufficient to extend this inequality to space/time points in $\Lambda_1 \times \T^d = [0,1]\times \T^d$. 
\begin{align*}\label{}
|P_t(f)(x) - P_s(f)(y)| & = 
|\Eb{f(x+Z_s + Z_{t-s}) - f(y+Z_s)}|\\
&  \leq \|f\|_\beta \Eb{\| x - y + Z_{t-s}\|^\beta} \\
& \leq \| f \|_\beta \Eb{\| x - y + Z_{t-s}\|^2}^{\beta/2} \text{   (by Jensen)} \\ 
& = \|f\|_\beta (\|x-y\|^2 + |t-s|)^{\beta/2}
\end{align*}
\qed

As explained in Remark \ref{r.Wick}, the non-linearity $e^{\gamma X}$ needs to be renormalized. Let us assume for the moment that we are able to construct an object $\Wick{e^{\gamma \Phi}}$ which satisfies 
\bi
\item[i)]  $\Wick{e^{\gamma \Phi}}$ belongs a.s. to $\calC_\s^{\alpha}$. 
\item[ii)] It is the a.s. limit in $\calC_\s^{\alpha}$ of its regularisations $\Wick{e^{\gamma \Phi_\eps}}$
\ei
(This will be proved in Theorem \ref{th.convergence}). 
We shall now explain how to set-up a fixed point argument sufficiently stable in its arguments so that it will provide a solution to~\eqref{e.dPD} (where $e^{\gamma \Phi}$ which is ill-defined is replaced by $\Wick{e^{\gamma \Phi}}$) which will be the limit of $\eps$-regularisations.


\subsection{Settling the fixed point argument}\label{ss.settle}
In what follows we fix $\alpha<0$ and $\kappa>0$ s.t. $\beta:=\alpha+2 -\kappa \in (0,1)$. We also assume that $\alpha+\beta>0$ in order to be able to use the multiplication theorem.

Recall we wish to solve~\eqref{e.dPD}. 
Again by {\em Duhamel's principle} and using the operators defined in Definitions \ref{d.K} and \ref{d.P}, we are looking for a fixed point of the map $F :  
\calC_\s^{\alpha+2 -\kappa} \to \calC_\s^{\alpha+2 -\kappa}$ defined by 
\begin{align}\label{}
F \,\, : \,\, u \mapsto \Bigl( (s,x)\mapsto K(-\Wick{e^{\gamma \Phi}} e^{\gamma u} - R)(s,x) + P(v_0)(s,x)\Bigr)
\end{align}
It will be important to keep track of the fixed parameters (inputs) which define this map $F$. Namely we introduce the following map. 
\begin{definition}\label{d.FPmap}
For any fixed choices of $t>0$, $\Theta\in \calC_\s^\alpha$, $v_0\in \calC^{\alpha+2-\kappa}(\T^2)$ and $R\in \calC_\s^\infty$, consider the following map on $\calC_\s^{\alpha+2 -\kappa}(\Lambda_t)$:
\begin{align}\label{}
F_{t,\Theta, v_0,R} \,\, : \,\, u \mapsto \Bigl( (s,x)\in \Lambda_t \mapsto K(- \Theta e^{\gamma u} - R)(s,x) + P(v_0)(s,x)\Bigr)
\end{align}
N.B. By the multiplication theorem \ref{th.mult}, this map is well defined if we suppose $2\alpha+ 2 -\kappa>0$. This is in some sense the barrier from Da Prato-Debussche approach that the non-linearity needs to live at least in $\calC_\s^{-1+\delta}$.  
\end{definition}

The key fact is that this map is a contraction on a suitably chosen subset of the Banach space $\calC_\s^{\alpha+2-\kappa}(\Lambda_t)$ if $t$ is chosen sufficiently small. More precisely we have:
\begin{proposition}\label{pr.FP}
For any $M>0$, there exists $t=t(M)>0$ and $r=r(M)>0$ so that for any choice of $(\Theta, R, v_0)$ satisfying 
\begin{align}\label{e.BB}
\|\Theta\|_{\calC_\s^\alpha(\Lambda_{t=1})} \vee \|R\|_{C_\s^{\alpha+2-\kappa}} \vee \|v_0\|_{\calC^{\alpha+2-\kappa}} < M 
\end{align}
Then, the map $F_{t,\Theta, v_0,R}$ is at least $1/2$-contracting from the ball  $B_{\calC_\s^{\alpha+2-\kappa}(\Lambda_t)}(0,r) \subset \calC_\s^{\alpha+2-\kappa}(\Lambda_t)$ to itself. 
This shows the existence of a unique fixed point $v_{t,\Theta,v_0,R}\in \calC_\s^{\alpha+2-\kappa}(\Lambda_t)$ for the map $F_{t,\Theta,v_0,R}$. Furthermore, the map
\[
(\Theta,v_0,R) \mapsto v_{t(M), \Theta, v_0, R}\in \calC_\s^{\alpha+2-\kappa}(\Lambda_{t(M)})
\]
is continuous on the open set defined by the constraint~\eqref{e.BB}. 
\end{proposition}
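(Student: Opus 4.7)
The plan is to carry out a standard Banach fixed point argument in the complete metric space given by the closed ball $B_{\calC_\s^\beta(\Lambda_t)}(0,r)$, with $\beta:=\alpha+2-\kappa\in(0,1)$, invoking in sequence the parabolic Schauder estimate (Proposition \ref{th.SchauderP}), the multiplication theorem (Theorem \ref{th.mult}), and the standard fact that for $\beta\in(0,1)$ the Hölder norm $\|\cdot\|_{\calC_\s^\beta}$ is a Banach algebra norm. The only place where $t$ enters quantitatively is through the factor $t^{\kappa/2}$ furnished by Schauder, so shrinking $t$ is what will ultimately give stability and contraction.

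First I would record the auxiliary composition estimate: for any $u,u_1,u_2\in \calC_\s^\beta(\Lambda_t)$ with norms bounded by $r$,
\begin{align*}
\|e^{\gamma u}\|_{\calC_\s^\beta(\Lambda_t)} &\leq C(r,\gamma),\\
\|e^{\gamma u_1}-e^{\gamma u_2}\|_{\calC_\s^\beta(\Lambda_t)} &\leq C(r,\gamma)\,\|u_1-u_2\|_{\calC_\s^\beta(\Lambda_t)}.
\end{align*}
The first bound follows from $|e^{\gamma u(z)}-e^{\gamma u(z')}|\leq \gamma e^{\gamma r}|u(z)-u(z')|$ together with $\|e^{\gamma u}\|_\infty\leq e^{\gamma r}$. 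The second comes from writing $e^{\gamma u_1}-e^{\gamma u_2}=\gamma(u_1-u_2)\int_0^1 e^{\gamma(u_2+\theta(u_1-u_2))}d\theta$ and using the Banach algebra property of $\|\cdot\|_{\calC_\s^\beta}$ to multiply the first factor by the (bounded) integrand.

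For the stability part, fix $r:=r(M)$ to be specified and $u\in B(0,r)$. Combining the composition bound, the multiplication theorem (note $\alpha+\beta>0$ is explicitly assumed), the Schauder estimate, and Lemma \ref{l.simple}, one obtains
\begin{align*}
\|F_{t,\Theta,v_0,R}(u)\|_{\calC_\s^\beta(\Lambda_t)}
\leq C t^{\kappa/2}\bigl(\|\Theta\|_{\calC_\s^\alpha}\,C(r,\gamma)+\|R\|_{\calC_\s^\alpha}\bigr)+C\|v_0\|_{\calC^\beta}.
\end{align*}
Under \eqref{e.BB} the right-hand side is bounded by $CM+C t^{\kappa/2}M\bigl(1+C(r,\gamma)\bigr)$. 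Thus I first choose $r:=2CM$ to absorb the $P(v_0)$ contribution, then choose $t=t(M)>0$ small enough that the remaining $t^{\kappa/2}$-term is at most $r/2$, which gives $F(B(0,r))\subseteq B(0,r)$. Contraction is essentially the same computation applied to the difference: since $F(u_1)-F(u_2)=-K(\Theta(e^{\gamma u_1}-e^{\gamma u_2}))$,
\begin{align*}
\|F(u_1)-F(u_2)\|_{\calC_\s^\beta(\Lambda_t)}\leq Ct^{\kappa/2}\|\Theta\|_{\calC_\s^\alpha}\,C(r,\gamma)\,\|u_1-u_2\|_{\calC_\s^\beta(\Lambda_t)},
\end{align*}
and shrinking $t$ further (still as a function of $M$) makes the prefactor at most $1/2$. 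Banach's fixed point theorem then yields the unique $v_{t,\Theta,v_0,R}$.

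Finally, for continuity in $(\Theta,v_0,R)$ on the open set \eqref{e.BB}, I would use the classical uniform-contraction trick: if $v$ and $v'$ are the fixed points for two parameter choices in the admissible set, then
\begin{align*}
\|v-v'\|_{\calC_\s^\beta(\Lambda_t)}\leq \|F(v)-F'(v)\|_{\calC_\s^\beta(\Lambda_t)}+\tfrac{1}{2}\|v-v'\|_{\calC_\s^\beta(\Lambda_t)},
\end{align*}
so $\|v-v'\|\leq 2\|F(v)-F'(v)\|$, and the latter is controlled by linearity of $K$ and $P$ (and the multiplication theorem) by the differences of the inputs in $\calC_\s^\alpha$ and $\calC^\beta$. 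The main subtlety I anticipate is bookkeeping the constants so that $r$ is chosen \emph{before} $t$—one must ensure that $C(r,\gamma)$, which grows like $e^{\gamma r}$, does not force $t$ to depend on $r$ in a way that destroys the argument; since $r$ is a function of $M$ alone, this is harmless as long as one fixes the order of quantifiers carefully.
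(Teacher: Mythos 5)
Your argument is correct and follows essentially the same route as the paper's: parabolic Schauder estimate plus the multiplication theorem plus a Lipschitz bound for $u\mapsto e^{\gamma u}$ in $\calC_\s^{\beta}$ (the paper's Lemma \ref{l.contract}, which you reprove via the algebra property and an integral representation), with $r=r(M)$ fixed before $t=t(M)$ exactly as in the paper. Your continuity step via the uniform $1/2$-contraction bound $\|v-v'\|\leq 2\|F(v)-F'(v)\|$ is the same mechanism as the paper's device of restarting the Picard iteration from the other fixed point, so there is nothing substantively different to flag.
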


\ni
{\em Proof. (Sketch)}
We need to first check that for well chosen $t=t(M), r=r(M)$, the map $F$ preserves the ball of radius $r$ in $\calC_\s^{\alpha+2-\kappa}(\Lambda_t)$. For this note that 
\begin{align*}\label{}
& \| K(-\Theta e^{\gamma u} - R) + P(v_0)\|_{\calC_\s^{\alpha+2-\kappa}} \\
& \leq C t^{\kappa/2} \| \Theta e^{\gamma u} \|_{\calC_\s^{\alpha}} 
+ C t^{\kappa/2} \|R\|_{\calC_\s^{\alpha}} + \|P(v_0)\|_{\calC_\s^{\alpha+2-\kappa}} \\
& \leq C t^{\kappa/2} \|\Theta\|_{\calC_\s^\alpha} \|e^{\gamma u}\|_{\calC_\s^{\alpha+2-\kappa}}
+  C t^{\kappa/2} \|R\|_{\calC_\s^{\alpha+2-\kappa}} + C \|v_0\|_{\calC^{\alpha+2-\kappa}}
\end{align*}
where we used Proposition \ref{th.SchauderP} as well as Lemma \ref{l.simple}. We now need an estimate on $\|e^{\gamma u}\|_{\calC_\s^{\alpha+2-\kappa}}$ which is given to us by Lemma \ref{l.contract} below. By first choosing $r$ sufficiently large and then $t\leq 1$ sufficiently small we conclude that $F$ preserves the ball of radius $r$.

For the contraction property, when computing $\|F(u)- F(u')\|_{\calC_\s^{\alpha+2-\kappa}}$, the term coming from the initial condition $v_0$ as well as the term $R$ cancel out. We end up 
with controlling 
\begin{align*}\label{}
& \| K(\Theta e^{\gamma u}) -K(\Theta e^{\gamma u'}) \|_{\calC_\s^{\alpha+2-\kappa}} \\
& \leq C t^{\kappa/2} \| \Theta (e^{\gamma u} - e^{\gamma u'}) \|_{\calC_\s^{\alpha}} \\
& \leq C t^{\kappa/2} \|\Theta\|_{\calC_\s^\alpha} \|e^{\gamma u}-e^{\gamma u'}\|_{\calC_\s^{\alpha+2-\kappa}} \\
& \leq C t^{\kappa/2} \|\Theta\|_{\calC_\s^\alpha} C_r \, \gamma \|u-u'\|_{\calC_\s^{\alpha+2-\kappa}})\,,
\end{align*}
again by Lemma \ref{l.contract} below. By possibly choosing $t=t(M)$ even smaller than in the first step, we get the $1/2$-contraction property.

Finally, to prove the continuity statement, note that for any $u$ in the ball of radius $r$ of $\calC_\s^{\alpha+2-\kappa}(\Lambda_t)$ and any $(\Theta,v_0,R), (\Theta',v_0',R')$, one has 
\begin{align*}\label{}
& \|F_{t,\Theta,v_0,R}(u) - F_{t,\Theta',v_0',R'}(u)\|_{\calC_\s^{\alpha+2-\kappa}} \\
& \leq C t^{\kappa/2}  \|(\Theta -\Theta')(e^{\gamma u})\|_{\calC_\s^{\alpha}} + C t^{\kappa/2} \|R -R'\|_{\calC_\s^\alpha}  + \|P(v_0 -v_0')\|_{\calC_\s^{\alpha+2-\kappa}} \\
& \leq C t^{\kappa/2} \|(\Theta -\Theta')\|_{\calC_\s^\alpha} \|e^{\gamma u}\|_{\calC_\s^{2+\alpha-\kappa}} 
+ C t^{\kappa/2} \|R- R'\|_{\calC_\s^{\alpha+2-\kappa}}  + C \|(v_0 -v_0')\|_{\calC_\s^{\alpha+2-\kappa}}\,,
\end{align*}
again by Proposition \ref{th.SchauderP} and Lemma \ref{l.simple}. Now to show that both fixed points are close, it is enough to start Picard's iteration scheme for $F_{t,\Theta',v_0',R'}$ from the fixed point of the other $F_{t,\Theta,v_0,R}$ and use the fact that it is $1/2$-contractive.

To conclude the proof of the proposition, we are thus left with the following simple Lemma which is often implicit in the literature (for other non-linearities such a $u^3$ for the dynamical $\Phi^4_d$ model for example).

\begin{lemma}\label{l.contract}
Fix some regularity $\beta\in (0,1)$. For any $r>0$, there is $C=C_r>0$ s.t. for any $u, v \in B_{\calC_\s^\beta}(0,r)$, one has 
\begin{align*}\label{}
\| e^u - e^{v}\|_{\calC_\s^\beta} \leq C \| u - v\|_{\calC_\s^\beta}
\end{align*}
\end{lemma}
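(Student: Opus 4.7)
The natural idea is to represent $e^u - e^v$ as a product by the fundamental theorem of calculus, namely
\[
e^{u(z)} - e^{v(z)} = (u(z) - v(z)) \int_0^1 e^{t u(z) + (1-t) v(z)}\, dt \,=:\, (u-v)(z) \cdot H(z),
\]
and then estimate each factor in $\calC_\s^\beta$. The key observation is that $\calC_\s^\beta$ is a Banach algebra for $\beta \in (0,1)$: one has the standard inequality
\[
\|fg\|_{\calC_\s^\beta} \leq C\bigl(\|f\|_\infty \|g\|_{\calC_\s^\beta} + \|g\|_\infty \|f\|_{\calC_\s^\beta}\bigr),
\]
which follows directly from Definition \ref{d.Bbeta} by splitting $f(z)g(z) - f(z')g(z') = (f(z)-f(z'))g(z) + f(z')(g(z)-g(z'))$. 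This reduces the lemma to controlling the $\calC_\s^\beta$-norm of $H$ uniformly on the ball $B_{\calC_\s^\beta}(0,r)$.

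The bound on $\|H\|_\infty$ is immediate: since $\|u\|_\infty, \|v\|_\infty \leq \|u\|_{\calC_\s^\beta} \vee \|v\|_{\calC_\s^\beta} \leq r$, we have pointwise $|H(z)| \leq e^r$. For the H\"older part, given two space-time points $z, z'$ with $\|z-z'\|_\s \leq 1$, we use the elementary inequality $|e^a - e^b| \leq e^{\max(a,b)}|a-b|$ inside the integral:
\begin{align*}
|H(z) - H(z')| &\leq \int_0^1 e^{r} \bigl(t|u(z) - u(z')| + (1-t)|v(z) - v(z')|\bigr)\, dt \\
&\leq e^r \bigl(\|u\|_{\calC_\s^\beta} \vee \|v\|_{\calC_\s^\beta}\bigr) \|z - z'\|_\s^\beta \,\leq\, r\, e^r \|z - z'\|_\s^\beta.
\end{align*}
Combining the two bounds gives $\|H\|_{\calC_\s^\beta} \leq (1 + r) e^r =: \tilde C_r$.

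Plugging everything into the product estimate yields
\[
\|e^u - e^v\|_{\calC_\s^\beta} = \|(u-v) H\|_{\calC_\s^\beta} \leq C \bigl(\|u-v\|_\infty \tilde C_r + \|u-v\|_{\calC_\s^\beta} \|H\|_\infty\bigr) \leq C_r\, \|u-v\|_{\calC_\s^\beta},
\]
which is the desired Lipschitz estimate. There is no real obstacle here: the only point requiring a bit of care is to verify that $H$ has the needed H\"older regularity uniformly on $B(0,r)$, and this follows from pulling the norm inside the integral. Everything else is a direct application of the algebra property of $\calC_\s^\beta$ for $\beta > 0$.
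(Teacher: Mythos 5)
Your proof is correct, and it takes a somewhat different route from the paper's. The paper argues by a direct two-point computation: it first handles the sup-norm part of $\|\cdot\|_{\calC_\s^\beta}$ using that $x\mapsto e^x$ is Lipschitz on $[-r,r]$, and for the H\"older seminorm it factors the difference as $e^{u(z)}[e^{(v-u)(z)}-1]-e^{u(z')}[e^{(v-u)(z')}-1]$, replaces $e^{u(z')}$ by $e^{u(z)}\bigl(1+O(\|u\|_{\calC_\s^\beta}\|z-z'\|_\s^\beta)\bigr)$, and estimates the two resulting terms, again via Lipschitzness of the exponential on bounded sets. You instead write $e^u-e^v=(u-v)\int_0^1 e^{tu+(1-t)v}\,dt=(u-v)H$ and split the work into the product (Banach algebra) estimate for $\calC_\s^\beta$ with $\beta\in(0,1)$ and a uniform $\calC_\s^\beta$ bound on the mean-value factor $H$ over the ball of radius $r$; each of your steps (the integral representation, the product estimate obtained by the usual splitting $f(z)g(z)-f(z')g(z')=(f(z)-f(z'))g(z)+f(z')(g(z)-g(z'))$, and pulling the H\"older estimate inside the $dt$-integral) is valid with the paper's Definition \ref{d.Bbeta}, including the restriction $\|z-z'\|_\s\le 1$. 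Both arguments rest on the same elementary input ($e^x$ is $e^r$-Lipschitz on $[-r,r]$ and $\|u\|_\infty\vee\|v\|_\infty\le r$), so neither is deeper than the other; your version is a bit more systematic, gives the explicit constant $C_r\lesssim(1+r)e^r$, and generalizes verbatim to any nonlinearity $F$ whose derivative is locally Lipschitz, whereas the paper's computation is a hands-on estimate tailored to the exponential. No gap.
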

The proof is an elementary computation. Let us first deal with the $\|\cdot\|_\infty$ contribution in the definition of $\|\cdot\|_{\calC_\s^\beta}$ (recall Definition \ref{d.Bbeta}). 
For this part, note that $x\mapsto e^x$ is $c$-lipschitz on $[-r,r]$ for some $c=c_r$. In particular, for any space-time point $z$, $|e^u(z)- e^v(z)| \leq c |u(z)-v(z)|$. For the second part, one has for any $z\neq z'$, 
\begin{align*}\label{}
\frac
{|(e^{v} - e^{u})(z) -(e^{v} - e^{u})(z')|}
{\|z-z'\|_\s^\beta}
& = 
\frac{
|e^{u(z)}[e^{v-u}(z)-1] - e^{u(z')}[e^{v-u}(z')-1]|
}
{\|z-z'\|_\s^\beta} \\
&\hspace{- 3cm} = 
\frac{
|e^{u(z)}[e^{v-u}(z)-1] - e^{u(z)}[1+O(\|u\|_{\calC_\s^\beta} \|z-z'\|_\s^\beta)][e^{v-u}(z')-1]|
}
{\|z-z'\|_\s^\beta} \\
&\hspace{- 3cm} \leq 
\frac{
|e^{u(z)}[e^{v-u}(z)-e^{v-u}(z')]|}
{\|z-z'\|_\s^\beta} 
+O(\|u\|_{\calC_\s^\beta}) e^{u(z)} |e^{v-u}(z')-1]| \\
&\hspace{- 3cm} \leq 
C e^{\|u\|_\infty} \|v-u\|_{\calC_\s^\beta} 
+O(\|u\|_{\calC_\s^\beta}) C\, e^{\|u\|_\infty} \|v-u\|_\infty\,,
\end{align*}
where we relied several times on the fact that exponential is $C$-Lipschitz on $[-r,r]$. Also, from the first to second line, we used that $|e^{u(z')-u(z)}-e^0| \leq C_r\, |u(z)-u(z')| \leq C \|u\|_{\calC_\s^\beta} \|z'-z\|_\s^\beta$. \qed

\subsection{Proof of Theorem \ref{th.mainT}}

Let us briefly summarise: we wish to prove a \textbf{local existence} result for the simplified SPDE \ref{e.DLSs}. After a time-change and the Da Prato-Debussche change of variable $X = \Phi + v$ (where $\Phi:=K*\xi$, $K$ a compactly supported version of the heat kernel), we are left with solving~\eqref{e.dPD}. By Proposition \ref{pr.FP}, it is enough to show that for any $\gamma< \gamma_{dPD}=2\sqrt{2}-\sqrt{6}$, there exists a regularity $\alpha >-1$ s.t. the positive measure $\Theta:= \Wick{e^{\gamma \Phi}}$ belongs a.s. to $\calC_\s^\alpha$. Indeed given this regularity $\alpha>-1$, there exists $\kappa>0$ small enough such that $\beta:=\alpha+2 -\kappa \in (0,1)$ and $\alpha+\beta >0$. As $u(0)$ in Theorem \ref{th.mainT} is assumed to be $\Phi(0)+w$, with $w\in \calC^a(\T^2)$, we have $R:=\p_t \Phi - \frac 1 2 \Delta \Phi - \sqrt{2\pi} \xi$ ($R$ is smooth, see \cite{sine}), $v_0=w$ and $\Theta=\Wick{e^{\gamma \Phi}}$ which satisfy the assumptions of Proposition \ref{pr.FP} for a large enough $M>0$. This gives a fixed point solution to~\eqref{e.dPD} as well as~\eqref{e.DLSs} on the interval $[0,t(M)]$. Therefore in order to prove the existence of local solution to~\eqref{e.DLT}, 
it only remains to show the following Proposition, whose proof is postponed to the next Section.


\begin{proposition}\label{pr.reg}
Let us fix $\gamma <\hat \gamma_c=  2\sqrt{2}$. The positive measure $\Theta:=\Wick{e^{\gamma \Phi}}$ a.s. belongs to $\calC_\s^\alpha$ for any $\alpha < \bar \alpha(\gamma):=\frac {\gamma^2} 2 - 2 \sqrt{2} \gamma$. In particular for any $\gamma<\gamma_{dPD}=2\sqrt{2}-\sqrt{6}$, there exists $\alpha> -1$ s.t. $\Theta$ a.s. belongs to $\calC_\s^\alpha$. 
\end{proposition}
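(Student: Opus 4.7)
The plan is to prove this via a Kolmogorov-type moment criterion for negative Besov spaces, using integer-moment computations for space/time GMC and then optimizing over the moment exponent.

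First I would establish the covariance structure: the space/time field $\Phi = \sqrt{2\pi}\, K * \xi$ is Gaussian, log-correlated in the parabolic distance, i.e.\ $\Eb{\Phi(z)\Phi(z')} = -\log\|z-z'\|_\s + h(z,z')$ with $h$ bounded and smooth on compact sets (the parabolic Green kernel on $\R \times \T^2$ is log-singular, with parabolic dimension $d_\s = 2 + 2 = 4$). After mollification, $\Phi_\eps$ has variance $\sim \log(1/\eps)$, and the Wick-ordered exponential $\Theta_\eps := \Wick{e^{\gamma\Phi_\eps}} = C_\varrho\eps^{\gamma^2/2}e^{\gamma\Phi_\eps}$ is a positive function whose limit is a positive (space/time) GMC measure whenever $\gamma^2 < 2 d_\s = 8$.

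Next, for any integer $n \geq 2$, a direct Gaussian computation (the Wick renormalization exactly removes the diagonal blow-up) yields
\[
\Eb{\<\Theta_\eps, S_z^\lambda f\>^n} \,=\, \int \prod_{i=1}^n S_z^\lambda f(z_i) \exp\Bigl(\gamma^2 \sum_{i<j} C_\eps(z_i,z_j)\Bigr) dz_1\cdots dz_n\,,
\]
where $C_\eps$ is the regularized covariance. Performing the parabolic change of variables $z_i = z + \lambda \cdot w_i$ (consistent with the scaling of $S_z^\lambda$) and using the log-singular form of $C$, the Jacobians cancel the $\lambda^{-d_\s}$ normalizations of the test functions while each pairwise covariance contributes $-\gamma^2\log\lambda$, leading to
\[
\Eb{|\<\Theta_\eps, S_z^\lambda f\>|^n} \leq C \,\lambda^{-\gamma^2 n(n-1)/2}
\]
uniformly in $z$ on compact sets, $\eps\in(0,1)$, $\lambda\in(0,1]$ and $f\in B_m$, provided $n < 8/\gamma^2$ (so that the remaining integral $\int \prod f(w_i) \prod_{i<j}\|w_i - w_j\|_\s^{-\gamma^2}dw$ converges, by Kahane). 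Extending to non-integer $p \in [1, 8/\gamma^2)$ is done via Kahane's convexity inequality and positive-moment theory for GMC. Passage to the limit $\eps\to 0$ uses the convergence of $\Theta_\eps$ to $\Theta$ (to be established later in the paper).

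Finally, I would invoke a standard Kolmogorov-type criterion for the Besov space $\calC_\s^\alpha$ of Definition \ref{d.Balpha} (via dyadic/wavelet decomposition, as used e.g.\ in \cite{sine} and \cite{BookHairer}): if $\Eb{|\<\Theta, S_z^\lambda f\>|^p} \leq C\lambda^{\beta p}$ uniformly in $z,\lambda,f\in B_m$, then a.s.\ $\Theta \in \calC_\s^{\beta - d_\s/p - \kappa}$ for every $\kappa>0$. Applying this with $\beta_p = -\gamma^2(p-1)/2$ and $d_\s = 4$, the resulting regularity is
\[
\alpha(p) \,=\, -\frac{\gamma^2(p-1)}{2} - \frac{4}{p} - \kappa\,.
\]
Optimizing in $p$, the stationary point is $p^\star = 2\sqrt{2}/\gamma$ and the value is $\alpha(p^\star) = \gamma^2/2 - 2\sqrt{2}\gamma - \kappa = \bar\alpha(\gamma) - \kappa$. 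The constraint $p^\star < 8/\gamma^2$ reduces exactly to $\gamma < 2\sqrt{2} = \hat\gamma_c$, as assumed. Since $\kappa > 0$ is arbitrary, this yields $\Theta \in \calC_\s^\alpha$ a.s.\ for all $\alpha < \bar\alpha(\gamma)$, and specializing to $\gamma < 2\sqrt{2} - \sqrt{6}$ gives $\bar\alpha(\gamma) > -1$.

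The main obstacle is the extension to non-integer moments: only integer $p$ are directly accessible from the Gaussian computation, but to reach arbitrary $\alpha < \bar\alpha(\gamma)$ one must take $p$ near the (generally non-integer) value $p^\star = 2\sqrt{2}/\gamma$. This requires Kahane's positive-moment machinery for GMC and some care to keep the moment bound uniform in $z$, $\lambda$, and the test function $f \in B_m$, using that $|S_z^\lambda f|$ is supported in a parabolic ball of radius $\lambda$ around $z$ with $L^\infty$-norm $\lesssim \lambda^{-d_\s}$.
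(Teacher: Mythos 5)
Your overall strategy is the one the paper itself follows: uniform-in-$\eps$ moment bounds of multifractal type for $\langle f_z^\lambda,\Theta_\eps\rangle$ (Proposition \ref{pr.moments} and Proposition \ref{pr.mom}, i.e. $\xi_\s(q)=\frac{\gamma^2}{2}(q-q^2)+4q$), a Kolmogorov/wavelet criterion for $\calC_\s^\alpha$ in the parabolic setting (as in Theorem \ref{th.convergence}, via \cite{tightness} and \cite{sine}), and optimization at $p^\star=2\sqrt{2}/\gamma$, which indeed yields $\bar\alpha(\gamma)=\frac{\gamma^2}{2}-2\sqrt{2}\gamma$ under exactly the constraint $\gamma<2\sqrt{2}$. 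Your integer-moment computation is precisely the paper's Appendix \ref{a.1} (Proposition \ref{pr.Asine1} and Lemma \ref{l.MI}), and your remark that for $\gamma\geq 2$ one can only use the a.s.\ weak convergence of $\Theta_\eps$ (not Besov convergence) together with the uniform bounds is consistent with the paper, which for this proposition invokes \cite{Kahane,Shamov,Nath} and claims membership in $\calC_\s^\alpha$ only, not convergence in that topology.

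The genuine gap is the step you call the ``main obstacle'' and then dispatch by citing ``Kahane's positive-moment machinery for GMC''. First, in a large part of the range covered by the proposition this step carries all the weight: already for $\gamma>\sqrt{2}$ the optimizer $p^\star=2\sqrt{2}/\gamma$ lies in $(1,2)$, and for $\gamma\in[2,2\sqrt{2})$ there is no integer $n\geq 2$ with $n<8/\gamma^2$ at all, so the direct Gaussian computation cannot even serve as a starting point there. Second, the standard GMC derivation of the small-ball moment bound $\Eb{\mu(B_\s(z,r))^q}\lesssim r^{\xi_\s(q)}$ for real $q$ rests on exact log-$*$-scale invariance of the covariance kernel, and no such kernel is known in the parabolic geometry where time and space scale differently; the paper flags this explicitly and has to build a substitute. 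Concretely, it compares $\Phi_{\delta\eps}$ on a grid of parabolic $\delta$-tiles with i.i.d.\ rescaled copies $Z^Q_{\delta\eps}$ plus an independent Gaussian of variance $\log\frac{1}{\delta}+K$ (Lemma \ref{l.KahBound}), applies Kahane's convexity inequality (Lemma \ref{l.K}), and runs the Bacry--Muzy induction on $n-1<q\leq n$ from \cite{Bacry} to prove the global bound of Lemma \ref{l.T0} and then Proposition \ref{pr.mom}, uniformly in $z$, $r$ and $\eps$. Without this (or an equivalent) argument, your appeal to off-the-shelf positive-moment theory does not close the proof for non-integer $p$, which is exactly where the threshold $\hat\gamma_c=2\sqrt{2}$ and the exponent $\bar\alpha(\gamma)$ come from.
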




Now, we wish to prove the convergence in probability of $X_\eps(t,x)$ to $X(t,x)$ in $\calC_\s^\alpha$ as stated in Theorem \ref{th.mainT}. Recall that $X_\eps$ is solution to the SPDE~\eqref{e.DLT} which is driven by the regularized noise $\xi_\eps:= \varrho_\eps*\xi$. As in the continuous setting, it is natural to introduce the process $\Phi_\eps:= K* \xi_\eps$ where the kernel $K$ is the same as the one defining $\Phi$ in~\eqref{e.K}. As such, $\Phi_\eps$ is a solution of the linear heat equation up to a smooth correction term $R_\eps$:
\[
\p_t \Phi_\eps = \frac 1 2 \Delta \Phi_\eps  + R_\eps + \xi_\eps 
\] 
It can easily be shown that $R_\eps$ converges in probability to $R$ in say $\calC_\s^1$. See \cite{sine}. By the same Da Prato-Debussche change of variable, we are trying to find a solution $v_\eps$ to the system 
\begin{align}\label{e.dPDeps}
\begin{cases}
& \p_t v_\eps = \frac 1 2 \Delta v_\eps  - C_\varrho \eps^{\g2}  e^{\gamma \Phi_\eps} e^{\gamma v_\eps} - R_\eps \\
& v_\eps(t=0,x)= X_\eps(t=0,x)-\Phi_\eps(t=0,x) = w(x)
\end{cases}
\end{align}

We are thus exactly in the setting of Proposition \ref{pr.FP}. In particular, as we already know that $R_\eps \to R$, the continuity of the fixed point in the parameters $\Theta,v_0, R$ provided by this Proposition implies that it is enough for us to show the following result (whose proof is also postponed to Section \ref{s.RE}, see Theorem \ref{th.convergence}). 

\begin{proposition}\label{pr.conv}
For any $\gamma< \gamma_{L^2}=2$ and for any $t>0$, we have that $\Theta_\eps:= C_\varrho \eps^{\g2} e^{\gamma \Phi_\eps}$
converges in probability to $\Theta=\Wick{e^{\gamma X}}$ in $\calC_\s^\alpha(\Lambda_t)$ for any $\alpha<\g2 - 2\sqrt{2} \gamma$ (see Remark \ref{r.Wick} for the choice of $C_\varrho$). Furthermore the limit $\Theta=\Wick{e^{\gamma X}}$ does not depend on the mollifier $\varrho$. 
\end{proposition}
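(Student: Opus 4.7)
The strategy is to apply a Kolmogorov-type criterion on parabolic Besov spaces (in the spirit of \cite[Section 2]{sine} and \cite[Section 10]{HairerReg}) to $L^p$ moment bounds on the pairings $\langle \Theta_\eps, S_z^\lambda f\rangle$ and their $\eps,\eps'$-differences. The starting point is the Gaussian identity: since the $\Phi_\eps(z_i)$ are jointly centered Gaussian with covariance $G_\eps(z_i,z_j):=\E[\Phi_\eps(z_i)\Phi_\eps(z_j)]$,
\begin{align*}
\E\Bigl[\prod_{i=1}^p \Theta_\eps(z_i)\Bigr] = C_\varrho^p\, \eps^{p\gamma^2/2}\exp\Bigl(\tfrac{\gamma^2}{2}\sum_i G_\eps(z_i,z_i)+\gamma^2\sum_{i<j}G_\eps(z_i,z_j)\Bigr).
\end{align*}
By Proposition \ref{l.key}, the diagonal prefactor $C_\varrho\,\eps^{\gamma^2/2}\exp(\gamma^2 G_\eps(z,z)/2)$ is uniformly $O(1)$ as $\eps\to 0$. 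Moreover, $\Phi=K*\xi$ is log-correlated for the parabolic distance $\|\cdot\|_\s$ on $\R\times\T^2$, so $G_\eps(z,z')=-\log(\|z-z'\|_\s\vee\eps)+O(1)$. Combining these asymptotics yields, uniformly in $\eps$,
\begin{align*}
\E\Bigl[\prod_{i=1}^p\Theta_\eps(z_i)\Bigr]\;\ls\;\prod_{i<j}\bigl(\|z_i-z_j\|_\s\vee\eps\bigr)^{-\gamma^2}.
\end{align*}

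Inserting this into $\E[|\langle\Theta_\eps,S_z^\lambda f\rangle|^p]$ and performing the parabolic change of variables $z_i=z+\lambda\tilde z_i$ (with $\lambda^2$ in time, $\lambda$ in space), one obtains
\begin{align*}
\E\bigl[|\langle\Theta_\eps,S_z^\lambda f\rangle|^p\bigr]\;\ls\;\lambda^{-p(p-1)\gamma^2/2}\int\prod_{i<j}\|\tilde z_i-\tilde z_j\|_\s^{-\gamma^2}\prod_i|f(\tilde z_i)|\,d\tilde z_i.
\end{align*}
The residual Selberg/Morris-type integral converges as soon as $p\gamma^2<2d_\s=8$ (with parabolic dimension $d_\s=4$ on $\R\times\T^2$), which is compatible with the hypothesis $\gamma<2$. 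For the Cauchy property, writing $G_\eps-G_{\eps'}$ as an integral of the mollifier over scales in $[\eps\wedge\eps',\eps\vee\eps']$ and repeating the computation in the style of \cite[Section 3]{sine} produces an extra polynomial gain,
\begin{align*}
\E\bigl[|\langle\Theta_\eps-\Theta_{\eps'},S_z^\lambda f\rangle|^p\bigr]\;\ls\;(\eps\vee\eps')^{\theta p}\,\lambda^{-p(p-1)\gamma^2/2-\theta p},
\end{align*}
for some $\theta>0$ as long as $p$ is strictly below $8/\gamma^2$.

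The parabolic Kolmogorov criterion then converts these bounds into convergence in probability of $\Theta_\eps$ to a limit $\Theta$ in $\calC_\s^\alpha(\Lambda_t)$ for every $\alpha<-(p-1)\gamma^2/2-d_\s/p=-(p-1)\gamma^2/2-4/p$. Optimizing over admissible $p\in(1,8/\gamma^2)$ the derivative vanishes at $p^\ast=2\sqrt{2}/\gamma$, which lies in the admissible range for every $\gamma\in(0,2)$ (and remains $>1$ as long as $\gamma<2\sqrt{2}$). Evaluating at $p^\ast$ yields the sharp exponent
\begin{align*}
\alpha^\ast=-\tfrac{(p^\ast-1)\gamma^2}{2}-\tfrac{4}{p^\ast}=\tfrac{\gamma^2}{2}-2\sqrt{2}\gamma,
\end{align*}
matching the statement. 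Independence of the limit from the mollifier $\varrho$ is obtained by applying the same argument to the difference $\Theta_\eps^{(\varrho_1)}-\Theta_\eps^{(\varrho_2)}$, exploiting that the two mollified covariances coincide up to $O(1)$ terms on scales $\lesssim\eps$ and agree exactly at larger scales.

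The main obstacle is the Cauchy bound: obtaining a uniform polynomial gain in $\eps$ requires a careful multi-scale analysis of the near-diagonal contributions where several of the points $z_1,\dots,z_p$ cluster simultaneously, the parabolic analogue of the delicate moment analysis in \cite[\S3]{sine}; its validity rests crucially on the integrability margin $p\gamma^2<8$. A secondary technicality is that for $\gamma\in(\sqrt{2},2)$ the optimal $p^\ast$ lies in $(1,2)$, so the Gaussian moment identity must be interpreted via the explicit GMC fractional moment formula (valid for all $p\in(0,2d_\s/\gamma^2)$) or via Gaussian hypercontractivity, rather than by integer Gaussian combinatorics.
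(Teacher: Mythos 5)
Your architecture (Gaussian moment identity, parabolic rescaling, Kolmogorov/wavelet criterion, optimization at $p^*=2\sqrt{2}/\gamma$) is the same as the paper's, and for integer moments your computation is exactly the paper's Appendix A (Proposition \ref{pr.Asine1}). But there is a genuine gap at the point you dismiss as a ``secondary technicality'': the sharp threshold $\alpha<\frac{\gamma^2}{2}-2\sqrt{2}\gamma$ requires the moment bound $\Eb{|\<{f_z^\lambda,\Theta_\eps}|^{q}}\lesssim \lambda^{-\frac{\gamma^2}{2}q(q-1)}$ at the \emph{non-integer} value $q=p^*=2\sqrt{2}/\gamma$ (and uniformly for real $q<8/\gamma^2$). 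Neither of your two suggested shortcuts delivers this: Gaussian hypercontractivity is useless here because $\Theta_\eps$ is not in a fixed Wiener chaos, and there is no ``explicit GMC fractional moment formula'' available in this setting --- the standard GMC multifractal estimates rest on Kahane convexity \emph{together with} an exactly log-$*$-scale-invariant kernel, which does not exist for the parabolic metric $\|\cdot\|_\s$ on $\R\times\T^2$. This is precisely why the paper spends Lemma \ref{l.T0}, Lemma \ref{l.KahBound} and Proposition \ref{pr.mom} building a substitute: a Kahane comparison with independent rescaled copies on parabolic tiles plus a Bacry--Muzy-type induction over scales, with a separate concavity argument for $q\in[0,1)$. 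Note also that you cannot bypass this by interpolating integer moments: for $\gamma\in(\sqrt{2},2)$ only $q\leq 2$ integer moments exist ($8/\gamma^2<3$), and Lyapunov/H\"older interpolation between integer moments replaces the quadratic exponent $q(q-1)$ by its chord, which is strictly worse and does not reach $\frac{\gamma^2}{2}-2\sqrt{2}\gamma$ after optimization.

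On the other hand, the step you single out as ``the main obstacle'' (the Cauchy bound in $\eps$) is easier than you make it: the paper does not redo a multi-scale analysis of clustering points for general $p$. It only proves the difference estimate at $p=2$ (it is literally the Sine--Gordon computation (3.12) of \cite{sine}, since second moments of $\Theta_\eps-\Theta_{\bar\eps}$ and of $\Psi_\eps-\Psi_{\bar\eps}$ coincide; this is where the restriction $\gamma<\gamma_{L^2}=2$ enters), and then obtains the $q$-th moment bound with an $\eps^{\kappa}$ gain, Theorem \ref{th.convergence} (\ref{e.diff}), by combining this $L^2$ bound with the no-gain $q$-th moment bounds of Proposition \ref{pr.moments} exactly as for the left-hand side of (3.3) in Theorem 3.2 of \cite{sine}. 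So the correct repair of your proposal is: keep your integer-moment computation as a consistency check, import the fractional moment machinery of Section \ref{s.RE} (Kahane convexity plus the multiscale induction) for real $q<8/\gamma^2$, and get the $\eps$-gain by interpolation against the $L^2$ difference estimate rather than by a direct clustered-points analysis.
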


\begin{remark}\label{}
This result is interesting in its own for GMC measures as it provides a convergence in probability  which holds under a stronger topology (Besov spaces) than convergence results proved for GMC measures so far  (\cite{Kahane,DS,Nath}). See Theorem \ref{th.convergence} and the discussion which follows for a more precise convergence statement. 
\end{remark}

\begin{remark}
The value of $\gamma_{L^2}=2$ may look strange at first sight here. Indeed in $d=2$, the $L^2$-threshold happens at $\gamma=\sqrt{2}$, but in our present space-time setting, the $L^2$-threshold (the value of $\gamma$ at which $L^2$-moments cease to exist) happens at a later threshold $\gamma_{L^2}=2$. This is consistent with the moment computations in Lemma \ref{l.T0}. 
\end{remark}


\section{Regularity estimates and convergence of $\eps$-regularizations on the torus}\label{s.RE}

In this Section, we wish to prove Propositions \ref{pr.reg} and \ref{pr.conv}. As we shall see below, a significant part of the analytical work 
has been done already in \cite{sine}. (See for example Proposition \ref{l.key} below). 

\subsection{Method of proof}

The main ingredient to establish the regularity of the positive measure $\Theta:= \Wick{e^{\gamma X}}$ is an analysis of its moments. This is now classical in \textbf{Gaussian multiplicative chaos} and goes back to Kahane \cite{Kahane}, see also \cite{ChaosReview}. To obtain the regularity of $\Wick{e^{\gamma X}}$, one usually proceeds in three steps.
\bnum
\item First one shows that the measure $\Wick{e^{\gamma X}}$ when integrated, say, against a ball of radius one has $L^q$ moments until a certain threshold. Recall the threshold is $q_c=\frac {2d} {\gamma^2} = \frac 4 {\gamma^2}$ for Multiplicative chaos in $d=2$ dimensions. (See \cite{ChaosReview}). This will be the content of Lemma \ref{l.T0} below. As we shall see below, in our present space-time setting, the threshold will be instead $q_c=8/\gamma^2 (= 2(d+2)/\gamma^2)$. 
\item From the fact that the measure $\Wick{e^{\gamma X}}$ has {\em global} $L^q$ moments, one extracts quantitative $L^q$ estimates on the measure of small balls of radius $r$. In dimension $d=2$, this corresponds to the result (see \cite{ChaosReview}), 
\[
\Eb{M_\gamma(B(x,r)^q)} \asymp r^{\xi(q)}
\]
where $\xi(q)= (2+\frac {\gamma^2} 2)q - \frac {\gamma^2} 2 q^2$. It is sometimes called the \textbf{multifractal spectrum} of the measure $M_\gamma$. The classical proof of this estimates combines two elegant ingredients: \textbf{Kahane convexity inequality} as well as an appropriate notion of invariance: \textbf{log-$*$-invariance} (see \cite{ChaosReview}). As we shall see below, we will not be able to rely on this exact scale-invariance as we are not aware of the existence of log-$*$-invariant kernels in space-time $\R\times \R^2$. 

\item Finally, it is not hard from the knowledge of the multi-fractal spectrum $q\mapsto \xi(q)$ to deduce the regularity of $\Wick{e^{\gamma X}}$ by a Kolmogorov argument. This is done in the multiplicative Chaos setting for example in \cite{LBM} (see v2 on Arxiv) and is reminiscent of the more general Theorem 1.1 in \cite{tightness}.  See Theorem \ref{th.convergence}. 
\enum

Examples of positive-definite kernels which are log-$*$-invariant are known on each euclidean $\R^d, d\geq 1$. If the dimension $d\in \{1,2,3\}$, then, see \cite{ChaosReview}, one can take 
\[
K(x):= \log_+ \frac 1 {\|x\|_2}
\]
(If $d=3$, it is not known whether it is of $\sigma$-positive type in the sense of Kahane or not). 
If $d\geq 4$, this kernel is no-longer positive definite. Yet, as argued in \cite{ChaosReview}, the following kernel 
\[
K(x):= \int_{\S^{d-1}\subset \R^d} \log_+ \frac 1 {|\<{x,s}|} \sigma_{\S^{d-1}}(ds)
\]
gives an example of log-$*$-kernel in $\R^d$. This nice extension does not apply to our parabolic setting where space and time scale differently. We are not aware of any log-$*$-invariant kernel in the parabolic setting and will then have to proceed (slightly) differently. 

\subsection{Estimates on the log-correlated field $\Phi, \Phi_\eps$} $ $

Let us first introduce some notations. Recall $\Phi$ is an (approximate) solution to the linear heat equation defined as $K*\xi$ (see \eqref{e.K}). 
Let us now define our (approximate) solution to the linear heat equation driven by $\xi_\eps:= \varrho_\eps * \xi$. We define $\Phi_\eps$ to be the Gaussian Field
\begin{align}\label{e.Phieps}
\Phi_\eps: = K * \xi_\eps 
\end{align}
As in \cite{sine}, it can be seen that the correction term to the heat equation $R_\eps := \p_t \Phi_\eps - \frac 1 2 \Delta \Phi_\eps - \xi_\eps$ is a smooth function which converges as $\eps\to 0$ to a smooth limiting  function $R$.  
By the associativity of convolution, this Gaussian process can also be written as $K_\eps * \xi$ where one introduces the kernel $K_\eps:= K* \varrho_\eps$. Finally, 
 its covariance kernel $\calQ_\eps(t,x) := \Eb{\Phi_\eps(0,0) \Phi_\eps(t,x)}$  is given by (see \cite{sine})
\begin{align*}\label{}
\calQ_\eps
& = K_\eps * \calT(K_\eps)\\
& = \calQ * (\varrho_\eps * \calT \varrho_\eps)\, \;\;\;\;\;\;\; \calQ= K * \calT K\,,
\end{align*}
where $\calT f(t,x):=f(-t,-x)$.

We first collect very detailed estimates on the covariance structure of $\Phi,\Phi_\eps$ which were obtained in \cite{sine}.
These key estimates are summarised in the following Proposition.

\begin{proposition}[Section 3. in \cite{sine}]\label{l.key}
The covariance kernels $\calQ$ and $\calQ_\eps$ statisfy the following properties on $\T^2$. 
\bnum
\item 
There exists a constant $\hat C_\varrho$ which only depends on the smoothing function $\varrho$ such that 
\begin{align}\label{e.Ess0}
\calQ_\eps(0) (=\calQ_\eps(0,0,0)) = \log \frac 1 \eps +  \hat C_\varrho + O(\eps^2)  
\end{align}
\item There exist $c,C$ which are $\varrho$-dependent s.t. for any $\eps\in (0,1]$ and any $z,\, \|z\|_\s \leq 1$, 
\begin{align}\label{e.Ess1eps}
- \log(\| z\|_\s + \eps) + c \leq  \calQ_\eps(z) \leq - \log(\| z\|_\s + \eps) + C
\end{align}
\item 
\begin{align}\label{e.Ess1}
\calQ(z) \sim - \log(\| z\|_\s)
\end{align}
\item 
\begin{align}\label{e.Ess2}
|\calQ_\eps(z) - \calQ(z)| \leq C  \left( \frac \eps {\|z\|_\s} \wedge (1+ \log \frac \eps {\| z\|_\s} )\right)
\end{align}
\enum
N.B. Item (2) follows from the estimate (3.7) in \cite{sine}. 
\end{proposition}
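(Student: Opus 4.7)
The basic identity is $\calQ = 2\pi(K\ast\calT K)$. Since $K$ is compactly supported in space--time and agrees with the heat kernel of $\tfrac12\Delta$ near $t=0$, expanding the convolution and using the semigroup property $p_u \ast p_s = p_{u+s}$ gives, on a neighbourhood of the origin,
\[
\calQ(t,x) \;=\; \pi \int_{|t|}^{T_0} p_u^{\T^2}(x)\,du \;+\; h(t,x),
\]
with $T_0>0$ depending on the support of $K$ and $h$ smooth in a neighbourhood of $0$. On $\T^2$ only the $k=0$ image in $p_u^{\T^2}(x)=\sum_{k\in\Z^2}\frac{1}{2\pi u}e^{-|x+k|^2/(2u)}$ is singular as $(u,x)\to 0$; the remaining images are absorbed into $h$. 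Thus the analysis reduces to the explicit Euclidean integral $\int_{|t|}^{T_0} u^{-1}\exp(-|x|^2/(2u))\,du$ modulo smooth corrections.

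For item (3), I would compute this integral directly (via the substitution $v=|x|^2/(2u)$ at $t=0$, and by splitting at $u\asymp |x|^2$ for $t>0$) to obtain $\calQ(z) = -\log\|z\|_\s + g(z)$ with $g$ bounded near the origin. The right candidate is actually $g(z) := \calQ(z) + \tfrac12\log(|t|+|x|^2)$; a short calculation shows that $g$ is uniformly bounded on a neighbourhood of $0$ and of quadratic order, $g(z) - g(0) = O(\|z\|_\s^2)$, which will be needed below. Item (1) then follows by writing
\[
\calQ_\eps(0) \;=\; -\!\int \tfrac12\log(|s|+|y|^2)\,(\varrho_\eps\ast\calT\varrho_\eps)(s,y)\,ds\,dy \;+\; \int g(z)\,(\varrho_\eps\ast\calT\varrho_\eps)(z)\,dz,
\]
and rescaling: the change of variables $(s,y)=(\eps^2 s',\eps y')$ leaves $\varrho_\eps\ast\calT\varrho_\eps$ invariant up to its Jacobian and produces $-\tfrac12\log(\eps^2 s' + \eps^2|y'|^2) = \log(1/\eps) - \tfrac12\log(s'+|y'|^2)$, which yields the leading $\log(1/\eps)$ term plus a $\varrho$-dependent constant. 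The second integral converges to $g(0)$; the rate $O(\eps^2)$ uses the reflection symmetry $(\varrho\ast\calT\varrho)(-z) = (\varrho\ast\calT\varrho)(z)$, which kills the first-order contribution in the Taylor expansion of $g$, combined with the quadratic bound on $g$.

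For items (2) and (4), a case split on $\|z\|_\s$ versus $\eps$ suffices. If $\|z\|_\s \gtrsim \eps$, then $\|z-w\|_\s \asymp \|z\|_\s$ uniformly on $\mathrm{supp}(\varrho_\eps\ast\calT\varrho_\eps)$; so $\calQ_\eps(z) = \calQ(z) + O(1) = -\log\|z\|_\s + O(1) = -\log(\|z\|_\s+\eps)+O(1)$, and the gradient bound $|\nabla_\s \calQ(z)| \lesssim \|z\|_\s^{-1}$ together with the mean-value inequality sharpens this to $|\calQ_\eps(z) - \calQ(z)| \lesssim \eps/\|z\|_\s$. If $\|z\|_\s \lesssim \eps$, then smoothness of $\calQ_\eps$ at scale $\eps$ (coming from $\|\nabla \calQ_\eps\|_\infty \lesssim 1/\eps$) together with item (1) gives $\calQ_\eps(z) = \log(1/\eps) + O(1) = -\log(\|z\|_\s+\eps) + O(1)$, while $|\calQ(z)| \leq |\log\|z\|_\s| + O(1)$ directly, yielding the logarithmic branch $|\calQ_\eps(z)-\calQ(z)| \lesssim 1 + \log(\eps/\|z\|_\s)$ of item (4).

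The main obstacle I expect is obtaining the precise $O(\eps^2)$ in item (1): it requires the quadratic Taylor control on $g$ at $0$, which is non-trivial because the parabolic norm $\|z\|_\s$ is not smooth at $0$. The point is to work with the correction $\calQ + \tfrac12\log(|t|+|x|^2)$ rather than $\calQ + \log\|z\|_\s$, and to verify regularity of the remainder $h$ and of the sum of non-$k=0$ torus images. This is essentially the computation carried out in \cite[Section 3]{sine}.
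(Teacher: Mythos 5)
The paper does not actually prove Proposition \ref{l.key}: it is quoted from Section 3 of \cite{sine}, so your attempt is a reconstruction. Your overall route is the standard one and most of it is sound as a sketch: the reduction to the Euclidean integral $\tfrac12\int_{|t|}^{T_0}u^{-1}e^{-|x|^2/(2u)}\,du$ modulo smooth corrections (the non-zero torus images and the cut-off effects being smooth near the origin), the exact parabolic rescaling of the mollifier for the $\log\tfrac1\eps$ term, and the case split $\|z\|_\s\gtrsim\eps$ versus $\|z\|_\s\lesssim\eps$ for items (2) and (4), using the gradient bounds $|\p_t\calQ(z)|\lesssim\|z\|_\s^{-2}$, $|\nabla_x\calQ(z)|\lesssim\|z\|_\s^{-1}$ (which you assert but which do follow from the integral representation, and are proved in \cite{sine}).

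There is, however, a genuine gap in your argument for item (1). The function $g(z):=\calQ(z)+\tfrac12\log(|t|+|x|^2)$ is bounded near the origin but it is \emph{not} continuous there, so the claimed quadratic control $g(z)-g(0)=O(\|z\|_\s^2)$ is false and the Taylor-expansion-plus-evenness argument for the $O(\eps^2)$ rate collapses. Concretely, writing $E(t,x)=\tfrac12\int_{|t|}^{T_0}u^{-1}e^{-|x|^2/(2u)}\,du$ for the singular part, along the time axis one has $E(t,0)+\tfrac12\log|t|\equiv\tfrac12\log T_0$, whereas along the parabolic rays $|x|^2=c\,|t|$ the limit at the origin equals $\tfrac12\log T_0+\tfrac12\log(1+c)-\tfrac12\int_1^\infty\frac{1-e^{-c/(2w)}}{w}\,dw$, which genuinely varies with $c$ (its derivative at $c=0$ is $1/4$, and for $c\to\infty$ it involves Euler's constant); replacing $\log\|z\|_\s$ by $\tfrac12\log(|t|+|x|^2)$ does not cure this. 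Consequently your second integral does not converge to ``$g(0)$''; it converges to $\int G\,(\varrho*\calT\varrho)$, where $G(s,y)=\lim_{\eps\to0}g(\eps^2s,\eps y)$ is a direction-dependent profile. The statement itself is fine, and the repair is to avoid Taylor-expanding $g$ altogether: convolve the singular part exactly, using $p^{\R^2}_{\eps^2u}(\eps y)=\eps^{-2}p^{\R^2}_u(y)$, so that testing $\tfrac12\int_{|t|}^{T_0}p_u\,du$ against $\varrho_\eps*\calT\varrho_\eps$ yields $\log\tfrac1\eps$ plus an $\eps$-independent, $\varrho$-dependent constant with an $O(\eps^2)$ error coming only from the upper limit $T_0/\eps^2$; the evenness of $\varrho*\calT\varrho$ is then only needed to kill the first-order term of the genuinely smooth remainder $h$. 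Equivalently, show $g(\eps^2s,\eps y)=G(s,y)+O(\eps^2)$ uniformly on the (compact) support of $\varrho*\calT\varrho$ and absorb $\int G\,(\varrho*\calT\varrho)$ into $\hat C_\varrho$; this is in substance the computation of \cite[Section 3]{sine}. A minor additional remark: in the regime $\|z\|_\s\lesssim\eps$ of item (4) you should use the two-sided estimate (3) (i.e.\ $\calQ(z)=-\log\|z\|_\s+O(1)$), not just an upper bound on $|\calQ(z)|$, to get the cancellation of the two $\log\tfrac1\eps$-sized terms.
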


\subsection{Moments computation} 
In this subsection (as in Section 3 in \cite{sine}), instead of using $\Theta_\eps:= C_\varrho \eps^{\g2} e^{\gamma \Phi_\eps}$ it will be more convenient to rely on the exact Wick ordering 
\[
\Theta_\eps:= \Wick{e^{\gamma \Phi_\eps}}:= e^{\gamma \Phi_\eps - \g2 \calQ_\eps(0)}\,.
\]
and $\Theta$ will denote its limit (yet to be proved) in the regularity spaces $\calC_\s^\alpha$ defined in Subsection \ref{ss.Besov}. 
One can then easily get back to the original choice of renormalization in Theorem \ref{th.mainT} thanks to the above estimate~\eqref{e.Ess0}.

\begin{proposition}\label{pr.moments}
$ $

\bnum
\item For any $0\leq \gamma < \hat \gamma_c = 2\sqrt{2}$, and any (real number) $q< \frac 8 {\gamma^2}$, then as $\eps \to 0$, 
\begin{align}\label{e.Lq}
\Eb{|\<{f_z^\lambda, \Theta_\eps}|^q} \lesssim  \lambda^{-\g2 q(q-1)}\,,
\end{align}
uniformly over all test functions $f$ bounded by 1 and supported in the unit ball (for $\|\cdot\|_\s$), all $\lambda\in (0,1]$ and 
all space-time points $z$. 

\item Furthermore if $\gamma<2$, for $\kappa=\kappa(\gamma)$ sufficiently small, one has 
\begin{align}\label{e.L2}
\Eb{|\scal{f_z^\lambda, \Theta_\eps - \Theta_{\bar \eps}}|^2} \lesssim (\eps\vee \bar \eps)^{2\kappa} \lambda^{-2\kappa- \gamma^2 }\,,
\end{align}
uniformly in the same $f,\lambda, z$. 
\enum
\end{proposition}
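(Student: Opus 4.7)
The strategy is to combine the Gaussian structure of $\Theta_\eps = \Wick{e^{\gamma\Phi_\eps}}$ with the precise covariance estimates of Proposition \ref{l.key} and parabolic rescaling, following the template of \cite{sine}.

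For Part (1), I would first treat $q \in \N$ by direct expansion. Writing the $q$-th moment as a $q$-fold integral and using that for Wick exponentials of the centered Gaussian $\gamma\Phi_\eps$ one has $\Eb{\prod_i \Theta_\eps(w_i)} = \exp(\gamma^2 \sum_{i<j} \calQ_\eps(w_i - w_j))$, the bound \eqref{e.Ess1eps} yields
\[
\Eb{\<{f_z^\lambda, \Theta_\eps}^q} \leq C^q \int \prod_i |f_z^\lambda(w_i)| \prod_{i<j} (\|w_i - w_j\|_\s + \eps)^{-\gamma^2} \, dw_1 \cdots dw_q.
\]
The parabolic change of variables $w_i = z + \lambda \cdot \tilde w_i$ (time rescaled by $\lambda^2$, space by $\lambda$) factors out a prefactor $\lambda^{-\gamma^2 q(q-1)/2} = \lambda^{-\g2 q(q-1)}$, and the residual integral $\int_{B_\s(0,1)^q} \prod_{i<j} \|\tilde w_i - \tilde w_j\|_\s^{-\gamma^2}$ is finite by power-counting: clustering $k$ points inside a parabolic ball of radius $r$ gives volume $r^{(d+2)(k-1)} = r^{4(k-1)}$ against singularity $r^{-\gamma^2 k(k-1)/2}$, and the binding constraint (at $k=q$) is exactly $q < 8/\gamma^2$.

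For non-integer $q$, the usual Kahane convexity comparison against a log-$*$-scale invariant reference is unavailable, as no such kernel is known in parabolic space-time. I would instead use a scale decomposition $\Phi_\eps = Y_\lambda + Z_\lambda^\eps$ into independent centered Gaussian fields, where $Y_\lambda$ has variance $\log(1/\lambda) + O(1)$ and is essentially constant on $B_\s(z,\lambda)$, while $Z_\lambda^\eps$ carries fluctuations on scales $\eps \leq \cdot \leq \lambda$. By independence and the Gaussian m.g.f.,
\[
\Eb{|\<{f_z^\lambda, \Theta_\eps}|^q} \lesssim \Eb{e^{q\gamma Y_\lambda(z) - q \g2 \log(1/\lambda)}} \cdot \Eb{|\<{f_z^\lambda, \Wick{e^{\gamma Z_\lambda^\eps}}}|^q} = \lambda^{-\g2 q(q-1)} \cdot \Eb{|\<{f_z^\lambda, \Wick{e^{\gamma Z_\lambda^\eps}}}|^q}.
\]
After parabolic rescaling, the residual factor is the $q$-th moment of a standard log-correlated GMC on a unit parabolic ball with cutoff $\eps/\lambda$, which should be uniformly bounded in $\eps/\lambda$ for $q < 8/\gamma^2$ via a Kahane comparison with a scale-invariant reference (such as a multiplicative cascade on parabolic dyadic cubes, for which moment bounds are classical). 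The principal technical obstacle lies precisely in this last step: setting up the cascade comparison in parabolic geometry and controlling the comparison constant uniformly in the cutoff scale.

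For Part (2), the second moment expands as
\[
\Eb{|\<{f_z^\lambda, \Theta_\eps - \Theta_{\bar\eps}}|^2} = \iint f_z^\lambda(w_1) f_z^\lambda(w_2) \bigl(e^{\gamma^2 \calQ_\eps} + e^{\gamma^2 \calQ_{\bar\eps}} - 2\, e^{\gamma^2 \overline{\calQ}_{\eps,\bar\eps}}\bigr)(w_1-w_2)\, dw_1 dw_2,
\]
where $\overline{\calQ}_{\eps,\bar\eps}(z) := \Eb{\Phi_\eps(0)\Phi_{\bar\eps}(z)}$ satisfies the analogue of Proposition \ref{l.key}(4). Using $|e^a - e^b| \leq |a-b| e^{\max(a,b)}$ along with the interpolation $x \wedge 1 \leq x^{2\kappa}$ for $x \geq 0$ and $\kappa \in (0,\tfrac12]$, one obtains
\[
\bigl|e^{\gamma^2 \calQ_\eps} + e^{\gamma^2 \calQ_{\bar\eps}} - 2\, e^{\gamma^2 \overline{\calQ}_{\eps,\bar\eps}}\bigr|(z) \lesssim (\eps \vee \bar\eps)^{2\kappa} \|z\|_\s^{-\gamma^2 - 2\kappa}.
\]
Parabolic rescaling then produces the claimed bound $(\eps \vee \bar\eps)^{2\kappa} \lambda^{-\gamma^2 - 2\kappa}$, the residual integral $\int \|\tilde z\|_\s^{-\gamma^2 - 2\kappa}$ over a bounded parabolic region being finite precisely when $\gamma^2 + 2\kappa < d + 2 = 4$, which is achievable for some $\kappa > 0$ exactly under the assumption $\gamma < 2$.
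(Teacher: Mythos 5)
Your Part (2) and your integer-$q$ argument are fine and match what the paper does: the second-moment computation is exactly the one borrowed from the Sine-Gordon analysis (the paper simply observes that $\Eb{\Theta_\eps(y)\Theta_{\bar\eps}(y+z)}$ coincides with the corresponding Sine-Gordon quantity and invokes (3.12) of that paper), and your Coulomb-gas expansion with the bound $\Eb{\prod_i\Theta_\eps(w_i)}=\exp(\gamma^2\sum_{i<j}\calQ_\eps(w_i-w_j))$, parabolic rescaling and the cluster condition $q<8/\gamma^2$ is precisely the paper's Appendix \ref{a.1} (Proposition \ref{pr.Asine1} and Lemma \ref{l.MI}, where the finiteness of the singular integral is proved by a scaling recursion rather than bare power counting).

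The gap is in the non-integer $q$ case, and it is exactly the step you flag as ``the principal technical obstacle'': the uniform-in-cutoff moment bound for the unit-scale chaos is not a technicality to be outsourced, it is the entire content of the paper's proof. The paper proves it as Lemma \ref{l.T0}, via Kahane's convexity inequality against a comparison field built from iid rescaled copies of $\Phi_\eps$ on parabolic $\delta$-tiles plus an independent master Gaussian of variance $\log\frac1\delta+K$ (Lemma \ref{l.KahBound}), followed by a Bacry--Muzy-type recursion in $\delta$ using subadditivity of $x\mapsto x^{q/n}$; the exponent $\g2(q-q^2)+4q-4>0$, i.e. $q<8/\gamma^2$, is what makes the recursion contract. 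Your ``cascade on parabolic dyadic cubes'' would play the same role, but until that comparison is actually set up and the constant controlled uniformly in $\eps/\lambda$, the claim is unproved for non-integer $q$. Two further points need attention in your localisation step. First, pulling out $\Eb{e^{q\gamma Y_\lambda(z)}}$ evaluated at the centre $z$ is not legitimate as written: you must dominate $\sup_{w\in B_\s(z,\lambda)}e^{\gamma Y_\lambda(w)}$ and control the oscillation of $Y_\lambda$ on the ball by Gaussian concentration (this is the same device the paper uses at the end of Proposition \ref{pr.mom}). Second, your decomposition $\Phi_\eps=Y_\lambda+Z^\eps_\lambda$ with $\Var{Y_\lambda}=\log\frac1\lambda+O(1)$ only exists when $\eps\lesssim\lambda$; the regime $\lambda<\eps$, which is included in the claimed uniformity over $\lambda\in(0,1]$, must be treated separately, as the paper does by dominating the smooth field $\Phi_\eps$ on the ball by its supremum, yielding the stronger bound $O(1)\,\eps^{\g2(q-q^2)}\lambda^{4q}$ for $q<8/\gamma^2$. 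Note also that the paper organises the argument globally-to-locally (global $L^q$ bound first, then the multifractal ball estimate $r^{\xi_\s(q)}$ with $\xi_\s(q)=\g2(q-q^2)+4q$, from which \eqref{e.Lq} follows by $\lambda^{-4q}\lambda^{\xi_\s(q)}=\lambda^{-\g2 q(q-1)}$), whereas you go local directly; either order can work, but only once the Kahane comparison is carried out.
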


\begin{remark}\label{}
See Appendix \ref{a.1} for a different proof of this result in the special case of integer moments $q\in \N < \frac 8 {\gamma^2}$. The proof in appendix \ref{a.1} is much closer to the analysis in \cite{sine} of the polynomial moments for Sine-Gordon.
\end{remark}

\ni
\textit{Proof.}

\ni
{\bf Second moment estimate.}
The proof of the estimate (2) is exactly the same one as the proof of the estimate (3.12) in \cite{sine}. Indeed, the latter paper analyses the complex valued distribution $\Psi_\eps:= \Wick{e^{i\beta \Phi_\eps}} = e^{i \beta \Phi_\eps \purple{+} \frac{\beta^2} 2 \calQ_\eps(0)}$ instead of our positive measure $\Theta_\eps$. The second moment which is analyzed in \cite{sine} is 

\begin{align*}\label{}
\Eb{|\<{\varphi_x^\lambda, \Psi_\eps - \Psi_{\bar \eps}}|^2}
& = \iint \varphi_x^\lambda(y) \varphi_x^\lambda(y+z) \Eb{ [\Psi_\eps - \Psi_{\bar \eps}](y)  [\closure{\Psi_\eps} - \closure{\Psi_{\bar \eps}}](y+z)} dydz 
\end{align*}
It turns out it matches exactly the second moment we need. Indeed, 
\begin{align*}\label{}
\Eb{\Psi_\eps(y) \closure{\Psi_{\bar \eps}}(y+z)} 
& = \exp(\frac {\beta^2} 2 (\calQ_\eps(0) + \calQ_{\bar \eps}(0) )) \Eb{e^{i \beta \Phi_\eps(y) -i\beta \Phi_{\bar \eps}(y+z)}}  \\
&=   \exp(\frac {\beta^2} 2 (\calQ_\eps(0) + \calQ_{\bar \eps}(0) ))  \exp(-\beta^2/2 \Var{\Phi_\eps(y)-\Phi_{\bar\eps}(y+z)}) \\
& = \Eb{\Theta_\eps(y) \Theta_{\bar \eps}(y+z)}\,.
\end{align*}

\medskip
\ni
{\bf $q^{th}$ moments.}
Now for the moment estimate (1), this a different story. Indeed for Sine-Gordon (see Theorem 3.2. in \cite{sine}), one has instead $\Eb{|\<{f_z^\lambda, \Psi_\eps}|^q} \lesssim \lambda^{-\frac {\beta^2 q}{4\pi}}$. In particular the moments behave \textbf{linearly in $q$} which is a sign of {\bf monofractality}. Here, we have a \textbf{non-linear spectrum} which is the signature of intermittency.  

Let us first show that $\int_{[-T,T]\times \T^2} \Theta_\eps$ has $L^q$ moments uniformly in $\eps>0$ for all $q< 8 / \gamma^2$. (see Lemma \ref{l.T0} below). In order to avoid dealing with the periodic boundary issues inherent to $\T^2$, it will be enough (by using the $L^q$-triangle inequality) to stick to a sub-domain $D:=[0,1] \times [0,1]^2 \subset [0,1]\times \T^2$ and to show that $\int_{D} \Theta_\eps$ has uniform $L^q$ moments uniformly in $\eps>0$. (On the sphere this step will be even more needed to avoid spherical boundary conditions and to reduce curvature effects). Namely, we start by proving 

\begin{lemma}\label{l.T0}
For any $\gamma<\hat \gamma_c = 2 \sqrt{2}$ and any $q<8/\gamma^2$,
\begin{align*}\label{}
\sup_{\eps>0} \Eb{(\int_{D=[0,1]\times[0,1]^2} \Theta_{\eps})^q} <\infty
\end{align*}
\end{lemma}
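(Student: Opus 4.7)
My plan is to localize, then apply Kahane's convexity inequality to compare $\Theta_\eps$ with a reference Gaussian multiplicative chaos built from a parabolic dyadic cascade, whose moments satisfy an explicit self-similar recursion. The effective dimension is $d_\s = d+2 = 4$ (space-time with parabolic scaling), and the threshold $q_c = 2 d_\s/\gamma^2 = 8/\gamma^2$ will emerge as the point at which a natural contraction factor crosses $1$. The case $q\leq 1$ is trivial by Jensen (since $\E[\int_D \Theta_\eps dz] = |D|$), so I focus on $q>1$.

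I would first cover $D=[0,1]\times[0,1]^2$ by finitely many parabolic balls $B_1,\ldots,B_J$ of $\|\cdot\|_\s$-diameter at most $1/2$. By Minkowski's inequality it suffices to bound $\sup_\eps \E[(\int_{B_j}\Theta_\eps)^q]$ for each $j$. On such a small ball Proposition~\ref{l.key}(2) gives the uniform two-sided log bound
\[
-\log(\|z-z'\|_\s+\eps)+c \;\leq\; \calQ_\eps(z-z') \;\leq\; -\log(\|z-z'\|_\s+\eps)+C.
\]
I would then introduce a reference Gaussian field $Y_N$ on $\R\times\R^2$ via a parabolic dyadic cascade: at each level $n$, split every current parabolic box into $2^{d_\s}=16$ congruent parabolic children (halved $\|\cdot\|_\s$-diameter), attach i.i.d.\ $\calN(0,\log 2)$ weights, and define $Y_N(z)$ as the sum of the weights of all cells of levels $0,\dots,N$ containing $z$, with $N=\lceil\log_2(1/\eps)\rceil$. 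A direct count gives $\mathrm{Cov}(Y_N(z),Y_N(z'))=(\log 2)\bigl(N\wedge n^\ast(z,z')\bigr)$, where $n^\ast$ is the deepest common-ancestor level, and after averaging over random dyadic shifts one has $n^\ast(z,z')\geq \log_2(1/\|z-z'\|_\s)-O(1)$. Adding a small independent Gaussian dressing to absorb constants yields a pointwise covariance domination $\mathrm{Cov}(Y_N(z),Y_N(z'))\geq \calQ_\eps(z-z')-C$ on $B_j\times B_j$, uniformly in $\eps$.

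Kahane's convexity inequality applied to $x\mapsto x^q$ then yields, for each $j$,
\[
\E\!\Bigl[\Bigl(\int_{B_j}\Theta_\eps\,dz\Bigr)^{q}\Bigr] \;\leq\; C_{\gamma,q}\,\E\!\bigl[M_N(B_j)^q\bigr], \qquad M_N(dz):=\Wick{e^{\gamma Y_N}}\,dz,
\]
uniformly in $\eps$. By construction $M_N$ satisfies the exact self-similar identity
\[
M_N(B_0)\;\overset{d}{=}\;2^{-d_\s}\sum_{k=1}^{16}e^{\gamma X_k-\tfrac{\gamma^2}{2}\log 2}\,M_{N-1}^{(k)}(B_0),
\]
with $X_k$ i.i.d.\ $\calN(0,\log 2)$ and $M_{N-1}^{(k)}$ i.i.d.\ copies of $M_{N-1}$, independent of the $X_k$. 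Expanding the $q$-th moment and treating diagonal and off-diagonal contributions (the latter via Hölder) gives a recursion $\E[M_N(B_0)^q]\leq \rho(q)\,\E[M_{N-1}(B_0)^q]+C_q$ with
\[
\rho(q)\;=\;2^{\,d_\s(1-q)+\gamma^2 q(q-1)/2}.
\]
The condition $\rho(q)<1$ is equivalent to $(q-1)\bigl(\gamma^2 q/2-d_\s\bigr)<0$, i.e.\ $q<q_c = 2 d_\s/\gamma^2 = 8/\gamma^2$. In this regime $\sup_N\E[M_N(B_0)^q]<\infty$, and combined with the Kahane reduction above this proves the lemma.

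The main obstacle is the Kahane covariance-domination step: the cascade covariance is piecewise constant while $\calQ_\eps$ is smooth, and an honest pointwise inequality $\mathrm{Cov}(Y_N(\cdot),Y_N(\cdot))\geq \calQ_\eps-C$ is what makes Kahane's inequality applicable. I would handle this either by dressing the cascade with an auxiliary Gaussian field of bounded covariance or by bracketing $\calQ_\eps$ between two cascades of slightly different variances, in the spirit of the classical log-$*$-invariant construction of \cite{ChaosReview}. This also bypasses the absence of a log-$*$-invariant kernel in the parabolic space-time setting flagged earlier in this subsection.
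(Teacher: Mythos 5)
There is a genuine gap at the central step of your argument, namely the pointwise covariance domination of $\calQ_\eps$ by a dyadic-cascade field, and none of the fixes you sketch can repair it. Kahane's inequality requires a fixed Gaussian comparison field $Y$ with $\Eb{\Phi_\eps(z)\Phi_\eps(z')}\leq \Eb{Y(z)Y(z')}$ for \emph{every} pair $z,z'$ in the ball. For a single cascade $Y_N$, two points $z,z'$ that are very close but lie on opposite sides of a coarse dyadic boundary have common-ancestor level $n^*(z,z')=O(1)$, so $\mathrm{Cov}(Y_N(z),Y_N(z'))=O(1)$ while $\calQ_\eps(z-z')\sim \log\frac 1 {\|z-z'\|_\s}$ is unbounded; the deficit is logarithmic in the separation, hence cannot be absorbed by ``an auxiliary Gaussian field of bounded covariance'', and bracketing between cascades of slightly different per-level variances does not touch these boundary pairs either. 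The statement ``after averaging over random dyadic shifts one has $n^*\geq \log_2(1/\|z-z'\|_\s)-O(1)$'' is not usable: a shift-averaged cascade is no longer Gaussian, and if you instead sum independent cascades over finitely many deterministic shifts so that every pair has a deep ancestor in at least one grid, the covariance of the sum acquires a log-coefficient strictly larger than $1$ for typical close pairs, so the dominating chaos has a strictly smaller moment threshold and you lose exactly the exponent $8/\gamma^2$ you are trying to reach. This boundary obstruction is the classical reason one cannot compare GMC with exact cascades pointwise, and in the parabolic setting there is no known log-$*$-invariant kernel to substitute for the cascade, as flagged in the paper.

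The self-similar recursion for the cascade itself (the factor $\rho(q)=2^{d_\s(1-q)+\frac{\gamma^2}{2}q(q-1)}$ and the threshold $q<2d_\s/\gamma^2=8/\gamma^2$) is fine, but it bounds the wrong object unless the comparison step is repaired. The paper avoids the multiscale comparison altogether: it works one scale at a time, comparing $\Phi_{\delta\eps}$ on a family $\calQ_1$ of $\delta$-tiles that are pairwise $\delta$-separated with a field made of i.i.d.\ copies of the \emph{rescaled field itself}, $Z_{\delta\eps}(z)=\Phi_\eps(\tfrac1\delta\cdot z)$, on each tile, plus one global Gaussian of variance $\log\frac1\delta+K$. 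The separation of tiles within a family is what makes the cross-tile covariance of $\Phi_{\delta\eps}$ bounded by $\log\frac1\delta+K$, so the global Gaussian absorbs it, while within a tile the estimate \eqref{e.Ess1eps} gives domination up to constants; iterating $\eps\mapsto\delta\eps$ then yields a contraction with exponent $\frac{\gamma^2}{2}(q-q^2)+4q-4>0$ exactly for $q<8/\gamma^2$, together with an induction on $n-1<q\leq n$ to handle the off-diagonal terms. If you want to keep a cascade-flavoured proof, you would need to reproduce some such separation-plus-global-Gaussian device at each scale; as written, your covariance domination fails and with it the Kahane reduction.
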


These types of moment estimates go back to \cite{Kahane, Mandelbrot} and we will follow very closely here the proof in \cite[Appendix D]{Bacry}.
The main technical tool is the following convexity inequality from Kahane. 
\begin{lemma}[Kahane convexity inequality, \cite{Kahane}, see also \cite{ChaosReview}]\label{l.K}
Let $(X_i)_{1\leq i \leq n}$ and $(Y_i)_{1\leq i \leq n}$ two centred Gaussian vectors which satisfy for any $i,j$,
\[
\Eb{X_i X_j} \leq \Eb{Y_i Y_j}\,.
\]
Then, for any combinations of nonnegative weights $(p_i)_{1\leq i \leq n}$ and any convex function $F: \R_+ \to \R$,
\begin{align*}\label{}
\Eb{F(\sum_{i=1}^n p_i e^{X_i - \frac 1 2 \Eb{X_i^2}})} \leq \Eb{F(\sum_{i=1}^n p_i e^{Y_i - \frac 1 2 \Eb{Y_i^2}})}\,.
\end{align*}
The property extends immediately to smooth centred Gaussian processes, say in our case $\{X_\eps(z)\}_{z\in \R \times \T^2}$, $\{Y_\eps(z)\}_{z\in \R\times \T^2}$. If for any $z,z'$, one has 
\[
\Eb{X_\eps(z) X_\eps(z')} \leq \Eb{Y_\eps(z) Y_\eps(z')}\,.
\]
Then, for any finite measure $\mu$ on $\R \times \T^2$:
\begin{align*}\label{}
\Eb{F(\int e^{\gamma X_\eps(z) - \frac {\gamma^2} 2 \Eb{X_\eps(z)^2}} d\mu(z) )} \leq 
\Eb{F(\int e^{\gamma Y_\eps(z) - \frac {\gamma^2} 2 \Eb{Y_\eps(z)^2}} d\mu(z) )}\,.
\end{align*}
\end{lemma}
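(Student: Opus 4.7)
The plan is to prove the finite-dimensional statement via the classical Slepian/Kahane Gaussian interpolation, then deduce the continuous-process version by a Riemann-sum approximation. Enlarge the probability space so that $(X_i)$ and $(Y_i)$ become independent, and set
\[
Z_i(t):=\sqrt{1-t}\,X_i+\sqrt{t}\,Y_i,\qquad t\in[0,1],
\]
so $\E[Z_i(t)Z_j(t)]=(1-t)\E[X_iX_j]+t\E[Y_iY_j]$. Writing $W_i(t):=p_i\exp\bigl(Z_i(t)-\tfrac12\E[Z_i(t)^2]\bigr)$, $S(t):=\sum_i W_i(t)$ and $\phi(t):=\E[F(S(t))]$, the two sides of the desired inequality are exactly $\phi(0)$ and $\phi(1)$, so it suffices to prove $\phi'(t)\geq 0$ on $(0,1)$.

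Computing the $t$-derivative yields $\frac{d}{dt}W_i(t)=W_i(t)\bigl(\tfrac{Y_i}{2\sqrt{t}}-\tfrac{X_i}{2\sqrt{1-t}}-\tfrac12(\E[Y_i^2]-\E[X_i^2])\bigr)$. Applying Gaussian integration by parts (Stein's lemma) to each factor $Y_i$ and $X_i$, using $\partial_{Y_k}Z_i=\sqrt{t}\,\delta_{ik}$ and $\partial_{X_k}Z_i=\sqrt{1-t}\,\delta_{ik}$, the diagonal contributions produced by the IBP cancel exactly the Wick correction $-\tfrac12(\E[Y_i^2]-\E[X_i^2])\E[F'(S)W_i]$, and one is left with
\[
\phi'(t)=\tfrac12\sum_{i,j}\bigl(\E[Y_iY_j]-\E[X_iX_j]\bigr)\,\E\bigl[F''(S(t))\,W_i(t)W_j(t)\bigr].
\]
Convexity of $F$ gives $F''\geq 0$, the weights $p_i\geq 0$ give $W_i(t)\geq 0$, and the hypothesis gives $\E[Y_iY_j]-\E[X_iX_j]\geq 0$ for every pair $(i,j)$. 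Since all three nonnegative quantities multiply termwise, every summand is nonnegative and $\phi'(t)\geq 0$.

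For the extension to smooth Gaussian fields $\{X_\eps(z)\}$, $\{Y_\eps(z)\}$ integrated against a finite measure $\mu$, I would partition $\R\times\T^2$ into a mesh of cells $\{C_k\}$ with sample points $z_k$, set $p_k:=\mu(C_k)$, and apply the discrete inequality to the Gaussian vectors $(X_\eps(z_k))_k$ and $(Y_\eps(z_k))_k$. Smoothness of $X_\eps,Y_\eps$ and continuity of $F$ let one pass to the limit (Riemann sums converging a.s.\ to the integrals) via dominated convergence, using that the Wick exponentials have uniformly bounded $L^q$ moments for some $q>1$ on compact domains. The only real technical nuisance is justifying the exchange of $\frac{d}{dt}$ and $\E$ and the use of Stein's identity when $F$ is merely convex (so that $F''$ only exists as a nonnegative measure): one first mollifies $F$ into a smooth convex $F_N$ of bounded derivatives, applies the argument to $F_N$, and then passes to the limit in $N$ using monotone convergence together with finiteness of all polynomial moments of $S(t)$ on $t\in[\delta,1-\delta]$. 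This is routine and does not affect the interpolation computation, which is the only substantive step.
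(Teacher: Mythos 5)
Your interpolation argument is correct and complete: the cancellation between the Gaussian integration-by-parts diagonal terms and the derivative of the Wick normalization is exactly right, and the remaining double sum is termwise nonnegative because the hypothesis $\Eb{X_iX_j}\le\Eb{Y_iY_j}$ is assumed for \emph{all} pairs, including $i=j$ (which is precisely what the Wick ordering buys you). The paper does not prove this lemma at all --- it is quoted as a classical result from Kahane and the Rhodes--Vargas review --- and the proof you give is the standard one found in those references, including the Riemann-sum passage to smooth Gaussian fields integrated against a finite measure; the only care needed there, which you correctly flag, is the approximation of a general convex $F$ by convex functions with bounded derivatives and a monotone-convergence passage to the limit.
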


Let us fix some $\delta>0$. Divide $D=[0,1]\times [0,1]^2$ into 8 sets of tiles $\calQ_1,\ldots, \calQ_8$ where the tiles are translates of the parabolic tile $Q_0:=[0,\delta^2]\times [0,\delta]^2$. The sets $\calQ_1,\ldots,\calQ_8$ are chosen so that tiles in each $\calQ_i$ are at $\| \cdot\|_\s$-distance at least $\delta$. To be more explicit, one may choose $\calQ_1:= \bigcup_{0\leq i,j\leq \floor{\delta^{-1}/2}, 0\leq k \leq \floor{\delta^{-2}/2}} (2k\delta^2,2i\delta,2j\delta)+Q_0$ and then define $\calQ_m,m\geq 2$ as translates of $\calQ_1$ in order to cover $[0,1]\times [0,1]^2$.
We have for each $q \geq 1$, 
\begin{align*}\label{}
\Eb{(\int_{[0,1]\times \T^2} \Theta_\eps )^q}^{1/q} \leq \sum_{i=1}^{8} \Eb{(\int_{\calQ_i} \Theta_\eps)^q}^{1/q}\,.
\end{align*}
Let us now focus on one of the $\calQ_i$, say $\calQ_1$. We wish to use Kahane's convexity inequality to introduce some independence as well as scaling into the analysis. As discussed at the beginning of this subsection, we do not have explicit log-*-scale invariant kernels at our disposal (such as $\log_+$ on $\R^2$ in this parabolic setting), we will thus introduce the following fields:
\begin{definition}\label{}
Fix $\delta\in (0,1]$ and $\eps>0$. 
Let $Q_0$ be the parabolic tile $[0,\delta^2]\times [0,\delta]^2$. Let $Z_{\delta \eps}$ be the centred Gaussian field on $Q_0$ defined by 
\begin{align*}\label{}
(Z_{\delta \eps}(z))_{z\in Q_0}:= (\Phi_\eps(\frac 1 \delta \cdot z))_{z \in Q_0}
\end{align*}
where $\lambda \cdot (t,x) : = (\lambda^2 t, \lambda x)$ and  $\Phi_\eps$ is the Gaussian field defined earlier in~\eqref{e.Phieps} by $K*\xi_\eps$.  (We have chosen the subscript $\delta \eps$ here because the process $Z_{\delta \eps}$ corresponds to a field which is regularised at scale $\delta \eps$.)

Assign for each tile $Q$ in $\calQ_1$, an iid copy $Z_{\delta \eps}^Q \sim Z_{\delta \eps}$ and consider the field 
\begin{align*}\label{}
Z_{\delta \eps}^{\calQ_1}(z):= \sum_{Q\in \calQ_1} 1_{z\in Q} \; Z_{\delta \eps}^Q(z) 
\end{align*}
\end{definition}

From the properties listed in Proposition \ref{l.key}, one obtains the following estimate. 
\begin{lemma}\label{l.KahBound}
There exists some finite $K>0$ s.t. for any $(z,z') \in \calQ_1$, $\delta\in (0,1]$ and $\eps>0$, 
\begin{align}\label{e.KahBound}
\Eb{\Phi_{\delta \eps}(z) \Phi_{\delta \eps}(z')} \leq  \Eb{Z_{\delta \eps}^{\calQ_1}(z) Z_{\delta \eps}^{\calQ_1}(z')} +  \log \frac 1 \delta + K\,.
\end{align}
\end{lemma}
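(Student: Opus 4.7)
The strategy is to split according to whether the points $z,z' \in \calQ_1$ lie in the same parabolic tile $Q$ or in two distinct tiles, and in each case to compare the left and right covariances directly using the two-sided bound~\eqref{e.Ess1eps}. No deeper input than parabolic scaling and the lower bound on $\calQ_\eps$ is needed.

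First, I record the effect of the scaling. Since $\Phi_\eps$ is (stationary) Gaussian and $\lambda\cdot(t,x)=(\lambda^2 t,\lambda x)$ preserves the parabolic norm up to the factor $\lambda$, one has
\[
\Eb{Z_{\delta\eps}(z)Z_{\delta\eps}(z')} \;=\; \calQ_\eps\bigl(\delta^{-1}\cdot(z-z')\bigr),
\qquad \|\delta^{-1}\cdot w\|_\s=\delta^{-1}\|w\|_\s.
\]
Points in the same tile $Q\subset\calQ_1$ satisfy $\|z-z'\|_\s\lesssim\delta$, so $\delta^{-1}\cdot(z-z')$ lies in the unit ball where~\eqref{e.Ess1eps} applies. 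Combining the lower bound in~\eqref{e.Ess1eps} with the elementary identity $-\log(\delta^{-1}\|z-z'\|_\s+\eps)=\log\delta-\log(\|z-z'\|_\s+\delta\eps)$ gives
\[
\Eb{Z_{\delta\eps}^{\calQ_1}(z)Z_{\delta\eps}^{\calQ_1}(z')}\;\geq\;\log\delta-\log(\|z-z'\|_\s+\delta\eps)+c,
\]
whereas the upper bound in~\eqref{e.Ess1eps}, applied to the scale $\delta\eps$, yields
\[
\Eb{\Phi_{\delta\eps}(z)\Phi_{\delta\eps}(z')}\;\leq\;-\log(\|z-z'\|_\s+\delta\eps)+C.
\]
Subtracting the first inequality from the second, the troublesome logarithmic term $-\log(\|z-z'\|_\s+\delta\eps)$ cancels and one obtains $\Eb{\Phi_{\delta\eps}(z)\Phi_{\delta\eps}(z')}-\Eb{Z_{\delta\eps}^{\calQ_1}(z)Z_{\delta\eps}^{\calQ_1}(z')}\leq \log(1/\delta)+(C-c)$, which is the claimed bound with $K:=C-c$.

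Second, for $z,z'$ lying in distinct tiles of $\calQ_1$, the right-hand covariance vanishes by the independence built into the construction of $Z_{\delta\eps}^{\calQ_1}$. On the other hand, by construction of $\calQ_1$ such points satisfy $\|z-z'\|_\s\geq\delta$, so the upper half of~\eqref{e.Ess1eps} gives
\[
\Eb{\Phi_{\delta\eps}(z)\Phi_{\delta\eps}(z')}\;\leq\;-\log(\|z-z'\|_\s+\delta\eps)+C\;\leq\;-\log\delta+C\;=\;\log(1/\delta)+C.
\]
Enlarging $K$ to $\max(C-c,\,C)$ covers both cases simultaneously and completes the argument. The proof is essentially a bookkeeping exercise; the only point that requires any attention is to verify that the rescaled separation $\delta^{-1}\cdot(z-z')$ remains in the regime $\|\cdot\|_\s\leq 1$ where~\eqref{e.Ess1eps} is valid, which is automatic from the $\|\cdot\|_\s$-diameter $O(\delta)$ of $Q_0$.
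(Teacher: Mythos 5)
Your proof is correct and follows essentially the same route as the paper: split into the same-tile and different-tile cases, use the parabolic scaling identity $\Eb{Z_{\delta\eps}(z)Z_{\delta\eps}(z')}=\calQ_\eps(\delta^{-1}\cdot(z-z'))$ together with the two-sided bound~\eqref{e.Ess1eps} in the first case, and independence of the tile copies plus $\|z-z'\|_\s\geq\delta$ in the second. The only cosmetic caveat (shared by the paper's own argument) is that the rescaled separation can slightly exceed the unit ball where~\eqref{e.Ess1eps} is stated, which is harmless up to adjusting the constants.
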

\ni
{\em Proof.}
Let us first deal with the case where $z,z'$ belong to the same $\delta$-tile $Q$ in $\calQ_1$.
Using the  above definition of $Z_{\delta \eps}^Q$ and the estimate~\eqref{e.Ess1eps}, one has for any $(z,z') \in Q$
\begin{align*}\label{}
\Eb{Z_{\delta \eps}^{\calQ_1}(z) Z_{\delta \eps}^{\calQ_1}(z')} &= \Eb{\Phi_\eps(\frac 1 \delta \cdot z)  \Phi_\eps(\frac 1 \delta \cdot z')} \\
& \geq \log \frac 1 {\| \frac 1 \delta \cdot (z - z') \|_\s + \eps} + c \\
& \geq  \log \frac 1 {\| z - z' \|_\s + \delta \eps}  + \log \delta + c \\
& \geq \Eb{\Phi_{\delta \eps}(z) \Phi_{\delta \eps}(z')} + \log \delta + c-C 
\end{align*}
Now, if $z,z'$ are in $\calQ_1$ but do not belong to the same tile $Q$, we have by independance of $Z_{\delta \eps}^{\calQ_1}$ on different tiles that $\Eb{Z_{\delta \eps}^{\calQ_1}(z) Z_{\delta \eps}^{\calQ_1}(z')}=0$. Furthermore, by construction of $\calQ_1$, we must have $\|z-z'\|_\s > \delta$ which implies $\Eb{\Phi_{\delta \eps}(z) \Phi_{\delta \eps}(z')} \leq \log \frac 1 \delta + K$ for some constant $K>0$.\qed

\ni
{\em Proof of Lemma \ref{l.T0} continued.}
Let $\Omega_{\log \frac 1 \delta + K}$ be a global normal r.v. of variance $\log \frac 1 \delta + K$ independent of $Z_{\delta \eps}$. Introduce now the Gaussian field on $\calQ_1$, $Y_{\delta \eps}(z):= Z_{\delta \eps}^{\calQ_1}(z) + \Omega_{\log \frac 1 \delta + K}$ so that one has for any $z,z'\in \calQ_1$, $\Eb{\Phi_{\delta\eps}(z) \Phi_{\delta\eps}(z')} \leq \Eb{Y_{\delta \eps}(z) Y_{\delta \eps}(z')}$. One may now use the above Kahane convexity inequality for $q\geq 1$:
\begin{align}\label{e.DDD}
\Eb{(\int_{\calQ_1} \Theta_{\delta \eps})^q} & = \Eb{(\int_{\calQ_1} e^{\gamma \Phi_{\delta \eps}(z) - \g2 \calQ_{\delta\eps}(0)})^q} \nn \\
& \leq  C  \delta^{\g2(q-q^2)} \Eb{(\int_{\calQ_1} e^{\gamma Z_{\delta \eps}^{\calQ_1}(z) - \g2 \calQ_{\eps}(0)})^q}\,,
\end{align}
where the term $\delta^{\frac{\gamma^2} 2 q}$ follows from the fact that by ~\eqref{e.Ess0} $\calQ_{\delta\eps}(0) = \log\frac 1 \delta + \log \frac 1 \eps \pm O(1)$ while $\delta^{-\frac{\gamma^2} 2 q^2}$ corresponds to the expectation of $\Eb{e^{\gamma q \Omega_{\log \frac 1 \delta +K}}}$.
Now, following almost verbatim the appendix D in \cite{Bacry} (to which we refer) and using the same notations, let $n\geq 2\in \N$, s.t. $n-1< q \leq n$, 
we use the sub-additivity of $x\mapsto x^{q/n}$ as follows, 
\begin{align}\label{e.DnonD}
\Eb{(\int_{\calQ_1} e^{\gamma Z_{\delta \eps}^{\calQ_1}(z) - \g2 \calQ_{\eps}(0)})^q}
& = \Eb{(\sum_{Q\in \calQ_1} \int_{Q} e^{\gamma Z_{\delta \eps}^{Q}(z) - \g2 \calQ_{\eps}(0)})^q} \nn \\
& \leq \Eb{\Bigl(\sum_{Q\in \calQ_1}  (\int_Q e^{\gamma Z_{\delta \eps}^{Q}(z) - \g2 \calQ_{\eps}(0)})^{\frac q n} \Bigr)^n}
\end{align}
By expanding the $n$-th power, we deal separately with the diagonal and non-diagonal term. Let us start with the latter one. An important feature of our field $Z_{\delta\eps}^{\calQ_1}$ is that it is made of iid copies $Z_{\delta\eps}^Q$ for each $Q\in \calQ_1$. Using this independance, each non-diagonal term is of the form $\prod_{j=1}^M \Eb{ (\int_{Q_j} e^{\gamma Z_{\delta \eps}^{Q}(z) - \g2 \calQ_{\eps}(0)})^{\frac {q s_j} n} }$ where $M$ is the number of parabolic $\delta$-tiles in $\calQ_1$, i.e. $M$ is of order $\frac 1 {\delta^4}$ and $\{s_j\}$ are integers in $\{0,\ldots, n-1\}$ which sum up to $n$. 
Using the fact that $n-1 < q \leq n$, $s_j \leq n-1$ and $\sum_j s_j =n$, each of these non-diagonal terms are bounded by  
\begin{align*}\label{}
\prod_{j=1}^M \Eb{ (\int_{Q_j} e^{\gamma Z_{\delta \eps}^{Q_j}(z) - \g2 \calQ_{\eps}(0)})^{\frac {q s_j} n} }
& \leq \prod_{j=1}^M \Eb{ (\int_{Q_j} e^{\gamma Z_{\delta \eps}^{Q_j}(z) - \g2 \calQ_{\eps}(0)})^{n-1}}^{\frac {q s_j} {n(n-1)}} \\
& =  \Eb{ (\int_{Q_0} e^{\gamma Z_{\delta \eps}(z) - \g2 \calQ_{\eps}(0)})^{n-1}}^{\frac q {n-1}}
\end{align*}
Now, by the definition of $Z_{\delta \eps}$ which is nothing but a rescaling on $Q_0=[0,\delta^2]\times[0,\delta]^2$ of $\Phi_\eps$ on $D=[0,1]\times [0,1]^2$, we have by change of variable that 
\begin{align*}\label{}
\Eb{ (\int_{Q_0} e^{\gamma Z_{\delta \eps}(z) - \g2 \calQ_{\eps}(0)}dz)^{n-1}}^{\frac q {n-1}}
&= \delta^{4q}  \;\; \Eb{ (\int_{D} \Theta_\eps(z) dz)^{n-1}}^{\frac q {n-1}}
\end{align*}

As there are at most $M^n= O(\delta^{-4n})$ such non-diagonal terms in ~\eqref{e.DnonD}, they contribute at most 
\begin{align}\label{e.nonD}
C \delta^{\g2(q-q^2)}  \delta^{4(q-n)}   \;\; \Eb{ (\int_{D} \Theta_\eps(z) dz)^{n-1}}^{\frac q {n-1}}
\end{align}
to the upper bound~\eqref{e.DDD}.  Now, the $M=O(\frac 1 {\delta^4})$ diagonal terms in~\eqref{e.DnonD} all have the same expression 
\begin{align*}\label{}
\Eb{ (\int_{Q_0} e^{\gamma Z_{\delta \eps}(z) - \g2 \calQ_{\eps}(0)}dz)^q}
&= \delta^{4q} \;\; \Eb{ (\int_{D} \Theta_\eps(z) dz)^q}
\end{align*}
Combining into~\eqref{e.DnonD} these $O(\frac 1 {\delta^4})$ diagonal terms with the contribution of the non-diagonal ones~\eqref{e.nonD}, we obtain 
\begin{align*}\label{}
 \Eb{(\int_{D} \Theta_{\delta \eps})^q}
& \\
& \hskip -2 cm  \leq 
\tilde C \delta^{\g2(q-q^2) + 4q-4} \Eb{ (\int_{D} \Theta_\eps(z) dz)^q} + \tilde C \delta^{\g2(q-q^2)}  \delta^{4(q-n)}   \;\; \Eb{ (\int_{D} \Theta_\eps(z) dz)^{n-1}}^{\frac q {n-1}}
\end{align*}
From now on, we conclude the proof of Lemma \ref{l.T0} in two steps. 
\bnum
\item First one assumes that we have for granted that $\sup_{\eps>0} \Eb{(\int_{D} \Theta_{ \eps})^{n-1}}<\infty$. Note that if $1<q<8/\gamma^2$, the exponent $\g2(q-q^2) + 4q -4>0$ which means that one can choose $\delta$ small enough so that $\tilde C \delta^{\g2(q-q^2) + 4q}<1$. For particular choices of regularisations for which $\eps \to (\int_{D} \Theta_{\eps})^q$ is a sub-martingale we conclude immediately. If we are not in the case of an exact sub-martingale, we can still conclude easily by noticing that for $\delta$ small enough, $\Eb{(\int_{D} \Theta_{\delta \eps})^q} \leq \frac 1 2 \Eb{(\int_{D} \Theta_{\eps})^q} + K$ for a large enough constant $K$ (where we used the above assumption). This implies that $\sup_{k\geq 0} \Eb{(\int_{D} \Theta_{\delta^k \eps})^q}<\infty$ and it is straightforward for example by varying the iteration parameter $\delta$ to deduce Lemma \ref{l.T0}. 

\item It thus remains to show our assumption that $\sup_{\eps>0} \Eb{(\int_{D} \Theta_{\eps})^{n-1}}<\infty$. This can be easily achieved by running this proof inductively on $n$: if $q\in (1,2]$, then $n-1=1$ and the assumption is obvious. In particular item (1) propagates the assumption to $n-1=q=2$ and one can keep iterating for $q\in(2,3]$ and so on. (See Appendix D in \cite{Bacry}). 
\enum \qed

\begin{remark}\label{}
Note that Kahane convexity inequality is not even needed in \cite[Apendix D]{Bacry}, but it seems less clear how to avoid it here due to the lack of known exact $*$-scale invariance kernel in our parabolic setting. 
\end{remark}

To conclude the proof of Proposition \ref{pr.moments}, it is enough to prove the following so-called \textbf{multifractal spectrum} which describes the $q^{th}$ moments of small balls. 
\begin{proposition}\label{pr.mom}
Let us fix $\gamma<\hat \gamma_c = 2\sqrt{2}$. For any $0\leq q < \frac 8 {\gamma^2}$, 
there exists $C<\infty$ s.t. for any $r\leq 1$ and any $z\in \R\times \T^2$, 
\begin{align*}\label{}
\sup_{\eps>0}\Eb{(\int_{B_\s(z,r)} \Theta_\eps)^q} \leq C r^{\xi_\s(q)}
\end{align*}
where $B_\s(z,r)$ is the $r$-ball around $z$ for the parabolic distance $\|\cdot\|_\s$ and $\xi_\s(q)=\g2(q-q^2)+4q$. 
\end{proposition}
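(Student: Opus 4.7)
\textbf{Proof plan for Proposition \ref{pr.mom}.}

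The plan is to use a parabolic rescaling combined with Kahane's convexity inequality to reduce the estimate on the ball $B_\s(z,r)$ to an estimate on the unit ball $B_\s(0,1)$, for which Lemma \ref{l.T0} already provides uniform $L^q$ bounds. The key idea is to exploit the near-scale-invariance of the log kernel $\calQ_\eps$ given by item (2) of Proposition \ref{l.key}.

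First I would perform the change of variables $z = z_0 + r \cdot w$ (where $r \cdot (t,x) := (r^2 t, rx)$ is the parabolic dilation) to obtain
\begin{equation*}
\Eb{\left(\int_{B_\s(z_0,r)} \Theta_\eps(z)\, dz\right)^{\!q}} = r^{4q}\, \Eb{\left(\int_{B_\s(0,1)} e^{\gamma \Phi_\eps(z_0 + r \cdot w) - \frac{\gamma^2}{2}\calQ_\eps(0)}\, dw\right)^{\!q}}.
\end{equation*}
The factor $r^{4q}$ comes from the fact that the parabolic Jacobian equals $r^{d+2} = r^4$. Inside the exponent, the rescaled centred Gaussian field $w \mapsto \Phi_\eps(z_0 + r \cdot w)$ has covariance $\calQ_\eps(r \cdot(w-w'))$, which by~\eqref{e.Ess1eps} is bounded above by $-\log(r\|w-w'\|_\s + \eps) + C$ for $w,w' \in B_\s(0,1)$ and $r \leq 1$.

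Next, let $\hat\Phi(w) := \Phi_{\eps/r}(w)$, a field whose covariance (again by~\eqref{e.Ess1eps}) satisfies $\Eb{\hat\Phi(w)\hat\Phi(w')} \geq -\log(\|w-w'\|_\s + \eps/r) + c$. Let $\Omega$ be an $\calN(0, \log\frac{1}{r} + K)$ Gaussian, independent of $\hat\Phi$, where $K := C - c$; and set $Y(w) := \hat\Phi(w) + \Omega$. Then for all $w,w' \in B_\s(0,1)$,
\begin{equation*}
\Eb{Y(w)Y(w')} \geq -\log(\|w-w'\|_\s + \eps/r) + c + \log\tfrac{1}{r} + K = -\log(r\|w-w'\|_\s + \eps) + C \geq \Eb{\Phi_\eps(z_0 + r\cdot w)\Phi_\eps(z_0 + r\cdot w')}.
\end{equation*}
A crucial point is that the Wick normalisations match up: using~\eqref{e.Ess0}, $\tfrac{\gamma^2}{2}\Eb{Y(w)^2} = \tfrac{\gamma^2}{2}(\calQ_{\eps/r}(0) + \log\frac{1}{r} + K) = \tfrac{\gamma^2}{2}(\log\frac{1}{\eps} + \hat C_\varrho) + O(1) = \tfrac{\gamma^2}{2}\calQ_\eps(0) + O(1)$.

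Applying Kahane's convexity inequality (Lemma \ref{l.K}) with the convex function $F(x) = x^q$ and integrating out the independent variable $\Omega$ yields
\begin{align*}
\Eb{\left(\int_{B_\s(0,1)} e^{\gamma \Phi_\eps(z_0 + r\cdot w) - \frac{\gamma^2}{2}\calQ_\eps(0)} dw\right)^{\!q}}
&\leq C\, \Eb{e^{\gamma q \Omega - \frac{\gamma^2 q}{2}(\log\frac{1}{r}+K)}}\, \Eb{\left(\int_{B_\s(0,1)} \Theta_{\eps/r}(w)\, dw\right)^{\!q}} \\
&\leq C\, r^{\frac{\gamma^2}{2}(q - q^2)}\, \sup_{\eps'>0}\Eb{\left(\int_{B_\s(0,1)} \Theta_{\eps'}\right)^{\!q}},
\end{align*}
where the moment-generating-function computation gives $\Eb{e^{\gamma q\Omega}} = e^{\frac{\gamma^2 q^2}{2}(\log\frac{1}{r}+K)}$, producing the factor $r^{-\gamma^2 q^2/2} \cdot r^{\gamma^2 q/2} = r^{\gamma^2(q-q^2)/2}$ after combining with the Wick shifts. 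By Lemma \ref{l.T0}, the supremum is finite whenever $q < 8/\gamma^2$, since $B_\s(0,1)$ can be covered by finitely many translates of $D$. Combining with the prefactor $r^{4q}$ yields the claimed bound with exponent $\xi_\s(q) = 4q + \frac{\gamma^2}{2}(q-q^2)$.

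The main obstacle is to ensure uniformity in $\eps$ together with the independence of the extra Gaussian shift $\Omega$ from $\hat\Phi$; this is why we work via Kahane and introduce $\Omega$ explicitly rather than trying to find an exact log-$*$-scale-invariant kernel (which is not known in the parabolic setting, as discussed in the paper). A small technical point is handling the case $r$ not small (which is trivial since $\xi_\s(q)$ gives a bounded factor) and the case $\eps/r > 1$, in which one can simply compare to the case $\eps/r = 1$ using monotonicity of $\eps \mapsto \Eb{(\int \Theta_\eps)^q}$ up to a constant (or truncate the recursion as in Lemma \ref{l.T0}).
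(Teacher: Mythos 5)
Your core computation is essentially the paper's own argument. The paper's proof covers $B_\s(z,r)$ by a parabolic tile of side $2r$ and then quotes the ``diagonal term'' estimate from the proof of Lemma \ref{l.T0}; that estimate is obtained exactly by the steps you spell out: parabolic rescaling with Jacobian $r^4$, a Kahane comparison (Lemma \ref{l.K}) of the rescaled field with $\Phi_{\eps/r}$ plus an independent Gaussian of variance $\log\frac 1r+K$ supplied by \eqref{e.Ess1eps}, matching of the Wick normalisations via \eqref{e.Ess0}, and Lemma \ref{l.T0} at unit scale. For $q\ge 1$ and $\eps\le r$ your argument is correct (modulo the harmless point that $\|w-w'\|_\s\le 2$ on the unit ball, so \eqref{e.Ess1eps} is used with adjusted constants), and your treatment of the coarse regime $\eps\ge r$ — a Kahane comparison of $\Phi_\eps$ with $\Phi_r$ plus a Gaussian of bounded variance, using $-\log(\cdot+\eps)\le-\log(\cdot+r)$ — is a legitimate alternative to the paper's route, which instead dominates $\Phi_\eps$ on the small ball by its value at the centre and uses Gaussian concentration to get $O(1)\,\eps^{\frac{\gamma^2}{2}(q-q^2)}r^{4q}$.

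The genuine gap is the range $q\in[0,1)$, which the statement includes. Kahane's inequality as you invoke it requires $F(x)=x^q$ to be convex, so both your main comparison and the ``monotonicity up to a constant'' used for $\eps\ge r$ only run for $q\ge1$; and one cannot fall back on the trivial bound $\Eb{(\int_{B_\s(z,r)}\Theta_\eps)^q}\le(\Eb{\int_{B_\s(z,r)}\Theta_\eps})^q\lesssim r^{4q}$, because for $q\in(0,1)$ one has $\xi_\s(q)=4q+\frac{\gamma^2}{2}(q-q^2)>4q$, so the claimed bound is strictly stronger than first-moment scaling. The fix for $\eps\le r$ is the one the paper indicates in its final sentence: run Kahane in the concave direction, using the two bounds in \eqref{e.Ess1eps} the other way around (the lower bound for the rescaled field, the upper bound for $\Phi_{\eps/r}$), with an auxiliary independent Gaussian of variance $\log\frac 1r-K$ attached to the comparison field so that its covariance is dominated by that of the rescaled field; the same moment-generating-function bookkeeping then produces $r^{\frac{\gamma^2}{2}(q-q^2)}$, now a positive power of $r$. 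Note also that for $q<1$ the coarse regime is genuinely different: when $\eps\ge r$ the factor $\eps^{\frac{\gamma^2}{2}(q-q^2)}$ no longer improves on $r^{\frac{\gamma^2}{2}(q-q^2)}$ (indeed, for fixed $\eps$ and $r\to0$ the moment is of order $r^{4q}$), so in that regime the stated exponent is only available for $\eps\lesssim r$; your patch via monotonicity does not close this case either, and you should at least record the restriction or treat $q\in[0,1)$ separately as the paper does.
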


\ni
{\em Proof.}
Clearly $B_\s(z,r)$ is included in a translate of a tile $Q_0:=[0,\delta^2]\times[0,\delta]^2$ used in the previous proof with $\delta=2r$. Now if $q\geq 1$, the $L^q$ moment of $\int_{Q_0} \Theta_{\delta\eps}$ has been bounded from above in the previous proof (this corresponds to each of the "diagonal terms") by $\tilde C \delta^{\xi_\s(q)} \times \Eb{ (\int_{D} \Theta_\eps(z) dz)^q}$. Using Lemma \ref{l.T0}, this implies readily that $\sup_{\eps<r}\Eb{(\int_{B_\s(z,r)} \Theta_\eps)^q} \leq O(1) r^{\xi_\s(q)}$. It remains to handle regularisations $\eps$ which are coarser than the radius $r$. The idea in such a case is to use the fact that the field $\Phi_\eps$ is smooth within $B_\s(z,r)$. One can then dominate the field simply by $\sup_{u\in B_\s(z,r)} \Phi_\eps(u)$. To make the analysis quantitative one may write $\Phi_\eps(u)=\Phi_\eps(z)+Y_\eps(u)$ and use the fact that the Gaussian process $Y_\eps(u)$ does not fluctuate much within $B_\s(z,r)$. One can then rely on standard Gaussian concentration bounds, which lead us to an upper bound $O(1)  \eps^{\g2(q-q^2)} r^{4q}$. This gives us a better bound than our desired upper bound when $0\leq q<8/\gamma^2$ and thus concludes the proof.  
We refer to \cite{sphere} and in particular to the end of the proof of Lemma 3.3,  where the details of such a Gaussian domination are given in a very similar setting. 
The case $q\in [0,1)$  can be done in the same fashion except $x\mapsto x^q$ is now concave and one has to use the estimate~\eqref{e.Ess1eps} the other way around to conclude via Kesten's convexity inequality. 
\qed

\subsection{Convergence of $\eps$-regularizations.}

From the estimates given in Proposition \ref{pr.moments}, we deduce the following convergence result (which is a detailed version of Proposition \ref{pr.conv}). 

\begin{theorem}\label{th.convergence}
For any $0\leq \gamma < 2$, there exists a stationary random positive measure $\Theta$ which is independent of the mollifier $\varrho$ and which satisfies for any $q\in [0, \frac 8 {\gamma^2})$ and any fixed $\kappa>0$, 
\begin{align}\label{e.diff}
\Eb{|\<{f_z^\lambda, \Theta - \Theta_\eps}|^q} \lesssim \eps^\kappa \lambda^{-\g2 q(q-1)}\,,
\end{align}
uniformly over all test functions $f$ bounded by 1 and supported in the unit ball (for $\|\cdot\|_\s$), all $\lambda\in (0,1]$, and locally uniformly over space-time points $z$. 
\medskip

It follows that $\Theta_\eps$ converges in probability to $\Theta$ in the space $\calC_\s^{\alpha}$ for any $\alpha< \g2 -2\sqrt{2} \gamma$. 
\end{theorem}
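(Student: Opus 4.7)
The plan is to (a) use the $L^2$-Cauchy estimate (2) of Proposition~\ref{pr.moments} to construct the distributional limit $\Theta$ together with its $\varrho$-independence; (b) combine estimates (1) and (2) by moment interpolation to establish the $L^q$-Cauchy bound~\eqref{e.diff} for every $q<8/\gamma^2$; and (c) feed \eqref{e.diff} into a Kolmogorov-type criterion for the parabolic Besov space $\Cs^\alpha$ so as to upgrade moment decay into a.s.\ convergence in $\Cs^\alpha$.

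\textbf{Existence and $\varrho$-independence.} Estimate (2) of Proposition~\ref{pr.moments} shows that for each fixed $(z,\lambda,f)$ the random variables $\langle S_z^\lambda f,\Theta_\eps\rangle$ form a Cauchy family in $L^2(\P)$, with rate $(\eps\vee\bar\eps)^\kappa$. Testing against a countable dense subfamily of test functions and applying Borel--Cantelli along the dyadic subsequence $\eps_n=2^{-n}$ produces an a.s.\ limiting random distribution $\Theta$ on $\R\times\T^2$. Running the same argument for two mollifiers $\varrho,\tilde\varrho$: the difference $\Theta_\eps^{\varrho}-\Theta_\eps^{\tilde\varrho}$ satisfies the analogue of \eqref{e.L2} since the covariance kernels of $\Phi_\eps^{\varrho}$ and $\Phi_\eps^{\tilde\varrho}$ have the same leading log-singularity (by Proposition~\ref{l.key}), so the two limits agree almost surely.

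\textbf{Moment interpolation.} For $q\le 2$, Jensen's inequality applied to $Y=\langle f_z^\lambda,\Theta_\eps-\Theta_{\bar\eps}\rangle$ together with \eqref{e.L2} already gives a bound of the required form (possibly after a cosmetic adjustment of $\kappa$). For $q>2$, choose $q'\in(q,8/\gamma^2)$ and use log-convexity of moments:
\[
\Eb{|Y|^q}\ \le\ \Eb{|Y|^2}^{\theta}\,\Eb{|Y|^{q'}}^{1-\theta},\qquad \theta=\frac{q'-q}{q'-2}.
\]
Inserting \eqref{e.L2} into the first factor contributes the decisive $(\eps\vee\bar\eps)^{2\kappa\theta}$, while the second factor is controlled by the uniform $L^{q'}$-bound \eqref{e.Lq} (applied to both $\Theta_\eps$ and $\Theta_{\bar\eps}$ via the triangle inequality). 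Letting $\bar\eps\to0$ and using Fatou together with the $L^2$-limit from step~(a) yields \eqref{e.diff}, uniformly in $f,z$ and locally uniformly in $z$ (by stationarity of $\Phi$).

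\textbf{Kolmogorov-type upgrade and main obstacle.} To pass from the moment bound \eqref{e.diff} to a.s.\ convergence in $\Cs^\alpha$, I would invoke a Kolmogorov-type criterion adapted to the parabolic Besov scale, in the spirit of Theorem~1.1 in~\cite{tightness} (or the analogous statement in~\cite{HairerReg}): any random distribution $\eta$ satisfying $\Eb{|\langle\eta,S_z^\lambda f\rangle|^q}\le C\lambda^{\sigma q}$ uniformly in $f\in B_m$, $z\in[-T,T]\times\T^2$ and $\lambda\in(0,1]$ lies a.s.\ in $\Cs^{\sigma'}$ for every $\sigma'<\sigma-(d+2)/q$, where $d+2=4$ is the parabolic dimension. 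Rewriting \eqref{e.diff} in this form (using $-\gamma^2 q(q-1)/2$ as the natural $\lambda$-exponent, consistent with the multifractal spectrum of Proposition~\ref{pr.mom}) identifies $\sigma=-\gamma^2(q-1)/2$. Optimizing $\sigma-4/q$ over admissible $q$ has critical point $q_\ast=2\sqrt{2}/\gamma$ and critical value $\gamma^2/2-2\sqrt{2}\gamma$, matching the claimed threshold. The condition $q_\ast<8/\gamma^2$ reduces to $\gamma<2\sqrt{2}$; the strictly more restrictive hypothesis $\gamma<2$ of the theorem is what guarantees the $L^2$-Cauchy estimate~\eqref{e.L2} on which the whole interpolation rests. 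The $\eps^{\kappa}$ factor in \eqref{e.diff} provides the summability needed by Borel--Cantelli along $\eps_n=2^{-n}$ to promote convergence in moments to a.s.\ convergence in $\Cs^\alpha$. The main technical obstacle is precisely this last step: one needs to invoke (or, if necessary, adapt) a parabolic Besov Kolmogorov criterion that simultaneously handles negative regularity, the parabolic scaling $|\s|=d+2$, and the quantitative $\eps$-rate, so that the optimization reproduces the sharp threshold $\alpha=\gamma^2/2-2\sqrt{2}\gamma$.
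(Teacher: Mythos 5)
Your proposal follows essentially the same route as the paper: there, too, the moment bound \eqref{e.diff} is obtained by combining the $L^2$-Cauchy estimate \eqref{e.L2} with the uniform bounds \eqref{e.Lq} (the paper defers this to the proof of Theorem 3.2 in \cite{sine}), and the convergence in $\calC_\s^\alpha$ is then deduced from the parabolic extension of the wavelet criterion of Theorem 1.1 in \cite{tightness}, upgraded from tightness to convergence as in Sections 3 and 10 of \cite{HairerReg}, with the same optimization at $\hat q=2\sqrt{2}/\gamma$ giving $\alpha<\g2-2\sqrt{2}\gamma$. One caveat: for $q>2$ your H\"older interpolation between the $q=2$ and $q=q'$ bounds yields the $\lambda$-exponent $-\theta(\gamma^2+2\kappa)-(1-\theta)\g2 q'(q'-1)$, which by strict convexity of $x\mapsto x(x-1)$ is strictly smaller than the stated $-\g2 q(q-1)$, so you only recover a lossy version of \eqref{e.diff}; this is harmless for the convergence statement (the regularity threshold comes from \eqref{e.Lq} alone, and any positive power of $\eps$ suffices for the upgrade), but it does not literally reproduce \eqref{e.diff} as written.
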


\begin{remark}\label{}
As mentioned above, it gives a different proof (inspired by \cite{sine}) of convergence of these measures as $\eps \to 0$ independently of mollifiers. Also the topology is slightly stronger than in works  on Gaussian Multiplicative Chaos so far. On the other hand, as a drawback, it only works  below the $L^2$ threshold as the proof is based on the estimate~\eqref{e.L2}. 
\end{remark}

\ni
{\em Proof of Theorem \ref{th.convergence}.}

The first part of the Theorem is proved exactly as the L.H.S of (3.3) in Theorem 3.2 in \cite{sine}. The fact it implies the convergence in $\calC_\s^\alpha$ for all $\alpha< \g2 - 2\sqrt{2} \gamma$ would follow the same lines as the proof of Theorem 2.1 in \cite{sine} (namely a characterization of $\calC_\s^\alpha$ in terms of wavelet coefficients and a Kolmogorov continuity argument, see Sections 3 and 10 in \cite{HairerReg}) if the moment bound~\eqref{e.diff},~\eqref{e.Lq} would hold for all integer $q$ and would be linear in $q$. A refined version of the link with wavelets is provided by Theorem 1.1. in \cite{tightness}. It states that a family of linear forms on $C_c^\infty(\R^d)$, $\{\eta_m\}_{m\geq 1}$ is tight in $\calC^\alpha(\R^d)$ for every $\alpha<\beta -\frac d q$ if for every wavelet $\psi$ in a suitably chosen finite collection $\Psi$,  
\[
\sup_{x\in \R^d} \lambda^{-d} \Eb{|\<{\eta_m, \psi(\lambda^{-1}(\cdot -x)}|^q}^{1/q} \leq C \lambda^\beta\,.
\] 
See \cite{tightness} for wavelet notations as well as a more detailed statement. It is straightforward to extend their statement to the parabolic setting (as in \cite[Sections 3,10]{HairerReg}). Their statement reads as follows in this parabolic setting: a family $\{\eta_m\}_{m\geq 1}$ of linear transforms on $C_c^\infty(\R \times \T^2)$ is tight in $\calC_\s^\alpha$ for every $\alpha < \beta - \frac 4 q$ if for every $\psi$ in a (suitably chosen) finite collection $\Psi$,
\[
\sup_{z\in \R \times \T^2} \Eb{|\<{\eta_m, \psi_z^\lambda}|^q}^{1/q} \leq C \lambda^\beta\,,
\] 
where $f \mapsto f_z^\lambda$ is the parabolic rescaling of $f$ defined earlier. Using the estimate ~\eqref{e.Lq}, this implies a tightness result in $\calC_\s^\alpha$, for all $\alpha < -\g2 (q-1) - \frac 4 q$ as far as $q<8/\gamma^2$. It is not hard by proceeding as in Sections 3 and 10 in \cite{HairerReg} to upgrade their tightness statement to a convergence statement by relying on~\eqref{e.diff}. Now optimizing in $q\in [0, 8/\gamma^2)$ gives $\hat q=\frac {2\sqrt{2}} \gamma$ and leads to a convergence in $\calC_\s^\alpha$ for all $\alpha<\g2 - 2\sqrt{2} {\gamma}$ which is sharp. (See Remark \ref{r.sharp} below for the explanation why this is sharp). 
This ends the proof of Theorem \ref{th.convergence} which implies Proposition \ref{pr.conv}.
\qed

\begin{remark}\label{r.sharp}
It is not hard to see that the convergence cannot hold in spaces of higher regularity than the threshold $\g2 - 2\sqrt{2} \gamma$. This can be proved for example by analysing the structure of space-time \textbf{thick points} of the log-correlated field $\Phi$ in space/time. 
\end{remark}

\ni
{\em Sketch of proof of Proposition \ref{pr.reg}.}\\
\ni
The point of Proposition \ref{pr.reg} is to go beyond the $L^2$ threshold. The techniques of \cite{Kahane, Shamov, Nath} all apply easily to our present setting and we shall not give more details here. Let us stress though that as opposed to the above Theorem \ref{th.convergence}, we do not claim a convergence in Besov spaces $\calC_\s^\alpha$ here as the crucial $L^2$ estimate~\eqref{e.L2} is no longer valid. The techniques of \cite{Kahane, Shamov, Nath}  only imply a convergence result under the weak topology. The following more precise statement than Proposition \ref{pr.reg} follows from these techniques.

\begin{proposition}\label{}
For all $\gamma < 2\sqrt{2}$, the positive measure $\Theta_\eps$ converges weakly  a.s. to a non-degenerate positive measure $\Theta$.  The limiting measure $\Theta$ does not depend on the mollifier $\varrho$ and a.s. belongs to $\calC_\s^\alpha$ for all $\alpha < \g2 - 2\sqrt{2} \gamma$ (note that we do not claim nor need the convergence in this space when $\gamma\geq 2$). 
\end{proposition}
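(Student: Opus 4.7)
The plan is to assemble the statement from three essentially independent ingredients: (i) the a.s.~weak convergence and non-degeneracy of $\Theta_\eps$, (ii) the mollifier-independence of the limit, and (iii) the a.s.~Besov regularity. Ingredients (i) and (ii) are standard subcritical GMC inputs, since Proposition~\ref{l.key} places us in a log-correlated Gaussian setting with effective space-time dimension $d_{\mathrm{eff}} = d+2 = 4$; the critical value $\hat\gamma_c = 2\sqrt{2}$ satisfies $\hat\gamma_c^2 = 2 d_{\mathrm{eff}}$, which is exactly the subcriticality threshold for GMC. Ingredient (iii) is an application of Fatou to Proposition~\ref{pr.mom} combined with the parabolic wavelet tightness criterion of \cite{tightness} already used in the proof of Theorem~\ref{th.convergence}.

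For (i), I would proceed as in \cite{Nath}. For a well-chosen approximation (e.g.\ martingale approximations associated to a dyadic decomposition of space-time), the process $\eps \mapsto \langle f, \Theta_\eps\rangle$ is an a.s.~convergent positive martingale for every continuous $f \geq 0$; taking $f$ in a countable dense family gives a.s.~weak convergence. For a generic $\varrho$ one compares $\Theta_\eps^\varrho$ with such a martingale approximation by $L^1$-truncation away from the thick points of $\Phi_\eps$, exactly as in \cite{Nath}; the required uniform integrability is provided by the $L^q$ bound of Lemma~\ref{l.T0}. Non-degeneracy of the limit $\Theta$ then follows from $\Eb{\Theta_\eps(D)} = |D|$ together with the same uniform integrability, which rules out mass collapse in the limit.

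For (ii), the independence of $\Theta$ from $\varrho$ is the content of Shamov's axiomatic characterization of subcritical GMC \cite{Shamov}: any non-degenerate weak limit of exponentials of Gaussian approximations of a given log-correlated field coincides with the canonical GMC associated to that field. For (iii), I would apply Fatou to Proposition~\ref{pr.mom} to transfer the multifractal spectrum estimate to the limit: for every $q < 8/\gamma^2$ and every $r \le 1$,
\[
\EB{\Ll(\int_{B_\s(z,r)} \Theta\Rr)^{\!q}} \le \liminf_{\eps \to 0} \EB{\Ll(\int_{B_\s(z,r)} \Theta_\eps\Rr)^{\!q}} \le C r^{\xi_\s(q)}, \qquad \xi_\s(q) = \g2(q - q^2) + 4q.
\]
Since the parabolic test functions $\psi_z^\lambda$ of Definition~\ref{d.Balpha} carry the prefactor $\lambda^{-(d+2)} = \lambda^{-4}$, this rewrites as $\Eb{|\langle \Theta, \psi_z^\lambda\rangle|^q}^{1/q} \le C \lambda^{\xi_\s(q)/q - 4} = C \lambda^{\g2(1-q)}$. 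Feeding $\beta = \g2(1-q)$ into the parabolic wavelet criterion of \cite{tightness} (as recalled in the proof of Theorem~\ref{th.convergence}) yields $\Theta \in \calC_\s^\alpha$ a.s.\ for every $\alpha < \g2(1-q) - 4/q$. The right-hand side is maximized at $q^\star = 2\sqrt{2}/\gamma$, which lies in $(1, 8/\gamma^2)$ exactly when $\gamma < 2\sqrt{2}$, and the optimum equals $\g2 - 2\sqrt{2}\gamma$, which is the claimed sharp regularity.

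The only genuinely delicate point is step~(i): past the $L^2$-threshold $\gamma = 2$ one loses the second moment and hence the Cauchy-in-$L^2$ strategy of Theorem~\ref{th.convergence}; one must rely on positivity and martingale structure together with Shamov's uniqueness to identify the limits obtained from different mollifiers. Everything else is a routine application of Fatou and of the wavelet-based machinery already invoked in the proof of Theorem~\ref{th.convergence}.
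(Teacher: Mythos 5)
Your proposal is correct and follows essentially the same route as the paper, which itself only sketches this result by invoking the techniques of \cite{Kahane, Shamov, Nath} for the a.s.\ weak convergence, non-degeneracy and mollifier-independence, and by deducing the a.s.\ membership in $\calC_\s^\alpha$ from the multifractal spectrum of Proposition \ref{pr.mom} via a Kolmogorov/wavelet argument in the spirit of \cite{LBM} and \cite{tightness}, with the same optimization $\hat q = 2\sqrt{2}/\gamma$ yielding the exponent $\g2 - 2\sqrt{2}\gamma$. Your write-up merely fills in these steps (Fatou on open balls, positivity to control wavelet coefficients by ball masses, uniform integrability from Lemma \ref{l.T0}) at a level of detail the paper deliberately omits.
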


\section{Beyond Da Prato-Debussche by exploiting the positivity}\label{s.POS}

In this Section we prove Theorem \ref{th.main2} which extends the local existence result (but not the convergence result) to higher values of $\gamma$ than the Da Prato-Debussche threshold $\gamma_{dPD}:=2\sqrt{2} - \sqrt{6}$ by exploiting the positivity of the measure $\Theta = \Wick{e^{\gamma \Phi}}$. The main step where one can rely on the positivity is when one needs to define a product between rough objects. Indeed, as stated in \cite{ICM}, 
 Young's Theorem \ref{th.mult} {\em yields a sharp criterion for when, in the absence of any other structural knowledge, one can multiply a function and  distribution of prescribed regularity.} In our present situation, we do have additional structural knowledge as we are multiplying functions with positive measures rather than general distributions. Let us then strengthen Theorem \ref{th.mult}  as follows in this particular case (for clarity, we write a statement as close as possible to Theorem \ref{th.mult}). 

\begin{proposition}\label{pr.multPOS}
Suppose only $\beta>0$ and $\alpha <0$ (as opposed to $\alpha+\beta>0$ in Theorem \ref{th.mult}), then there exists a  bilinear form $B(\cdot, \cdot): (\calC_\s^\alpha\cap M^+([0,T] \times \T^2)) \times \calC_\s^\beta \to \calC_\s^{\alpha}$ satisfying \footnote{$M^+(T)$ denotes here the space of positive measures on $T$ and we view positive measures as distributions acting on smooth functions. }
\bi
\item[i)] $B(f(x)dx,g)$ coincides with the classical product when $f,g$ are smooth. 
\item[ii)] There exists $C>0$ s.t. for any $\mu \in \calC_\s^\alpha\cap M^+([0,T] \times \T^2), g\in \calC_\s^\beta$, 
\[
\|B(\mu,g)\|_{\calC_\s^{\alpha}} \leq C \| \mu \|_{\calC_\s^\alpha} \| g \|_{\calC_\s^\beta}
\]
\ei
N.B. Note that we do not claim a  continuous extension in the first argument here (the continuity in the second argument is clear from $(ii)$). In particular we do not have a control of the form $\|B(\mu,g) - B(\tilde \mu,g)\|_{\calC_\s^{\alpha}} \leq C \| \mu -\tilde \mu \|_{\calC_\s^\alpha} \| g \|_{\calC_\s^\beta}$. This is the reason why we do not obtain a convergence result as $\eps \searrow 0$. 
\end{proposition}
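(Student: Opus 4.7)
The plan is to first define $B(\mu,g)$ classically, then promote it to the Besov estimate via a Taylor-like decomposition, with the only nontrivial input being a mass-on-balls bound for $\mu$ that crucially uses positivity. Since $\beta>0$, the element $g\in\Cs^\beta$ is continuous; and since $\mu\in M^+\cap \Cs^\alpha$ is a (locally) finite positive Radon measure on $[0,T]\times\T^2$, the definition
\[
\langle B(\mu,g),h\rangle := \int h(y)\,g(y)\,d\mu(y)
\]
makes sense for any continuous compactly supported $h$, agrees with the classical product when $\mu$ has a smooth density, and is plainly bilinear. This handles (i) and the existence of $B(\mu,g)$ as a distribution.

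To prove (ii), I would test $B(\mu,g)$ against an arbitrary rescaled bump $f_z^\lambda$ with $f\in B_m$, $\lambda\in(0,1]$, and split around the centre $z$:
\[
\langle B(\mu,g), f_z^\lambda\rangle = g(z)\,\langle \mu, f_z^\lambda\rangle + \int f_z^\lambda(y)\,\bigl(g(y)-g(z)\bigr)\,d\mu(y).
\]
The first term is immediate, bounded by $\|g\|_\infty\|\mu\|_{\Cs^\alpha}\lambda^\alpha$. For the second, $f_z^\lambda$ is supported on the parabolic ball $B_\s(z,\lambda)$ with $\|f_z^\lambda\|_\infty\lesssim \lambda^{-d-2}$, and on that support the H\"older bound $|g(y)-g(z)|\leq \|g\|_{\Cs^\beta}\lambda^\beta$ applies. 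The second term is therefore at most $\|g\|_{\Cs^\beta}\lambda^\beta\cdot\lambda^{-d-2}\cdot \mu(B_\s(z,\lambda))$.

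The key (and essentially only nontrivial) step is to convert $\|\mu\|_{\Cs^\alpha}$ into a quantitative bound on $\mu(B_\s(z,\lambda))$. Here I would use positivity of $\mu$ by dominating the indicator from above by a fixed smooth nonnegative bump: choose once and for all $\phi\in B_m$ nonnegative with $\phi\geq c\,\mathbf 1_{B_\s(0,1)}$ for some $c>0$ (any standard mollifier rescaled to fit in the unit parabolic ball works). Then $\phi_z^\lambda\geq c\lambda^{-d-2}\mathbf 1_{B_\s(z,\lambda)}$ and the positivity of $\mu$ gives
\[
\mu(B_\s(z,\lambda))\leq c^{-1}\lambda^{d+2}\langle \mu,\phi_z^\lambda\rangle \leq C\,\|\mu\|_{\Cs^\alpha}\,\lambda^{d+2+\alpha}.
\]
Substituting this into the bound on the remainder term gives $\lesssim \|\mu\|_{\Cs^\alpha}\|g\|_{\Cs^\beta}\lambda^{\alpha+\beta}$, which since $\beta>0$ and $\lambda\leq 1$ is absorbed into the principal $\lambda^\alpha$-bound. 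Combining, $|\langle B(\mu,g),f_z^\lambda\rangle|\lesssim \|\mu\|_{\Cs^\alpha}\|g\|_{\Cs^\beta}\lambda^\alpha$ uniformly in $f,z,\lambda$, which is exactly (ii).

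The only real obstacle is the positivity step above: a generic signed distribution $\eta\in\Cs^\alpha$ admits no such pointwise control on $|\eta|(B_\s(z,\lambda))$, which is precisely why the classical Young criterion demands $\alpha+\beta>0$ and also why $B(\cdot,g)$ fails to be continuous in its first argument on $M^+\cap\Cs^\alpha$: for two positive measures $\mu,\tilde\mu$, the difference $\mu-\tilde\mu$ is signed and the positivity trick is unavailable, so one cannot close a Lipschitz-in-$\mu$ estimate of the form $\|B(\mu,g)-B(\tilde\mu,g)\|_{\Cs^\alpha}\lesssim \|\mu-\tilde\mu\|_{\Cs^\alpha}\|g\|_{\Cs^\beta}$. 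This is the structural reason the authors must give up on $\eps$-convergence in the regime $\gamma\in[\gamma_{dPD},\gamma_{\mathrm{pos}})$.
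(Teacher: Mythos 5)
Your proof is correct and follows essentially the same route as the paper: both arguments rest on positivity, which first lets one test $g\,d\mu$ against continuous functions and then converts the $\Cs^\alpha$ norm into the ball-mass bound $\mu(B_\s(z,\lambda))\lesssim \lambda^{4+\alpha}\|\mu\|_{\Cs^\alpha}$, after which the estimate follows by bounding the pairing crudely (the paper even skips your split around $g(z)$ and uses only $\|g\|_\infty$, which already suffices since one claims no better regularity than $\alpha$). The single cosmetic caveat is that a smooth bump supported in the unit parabolic ball cannot dominate $c\,\mathbf{1}_{B_\s(0,1)}$ all the way to the boundary; take $\phi\geq c$ on the half-radius ball and test at scale $2\lambda$ (or adjust constants), which changes nothing in the conclusion.
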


\ni
\textit{Proof.}
First, the extension is nothing but the fact that as $[0,T]\times \T^2$ is compact, positive measures form the dual of continuous functions on $[0,T] \times \T^2$ and can then be tested against any test function in $\calC_\s^\beta$ as far as $\beta>0$ which we assumed. It clearly corresponds to the classical product  when $\mu(dx)=f(x) dx$ and $f$ and $g$ are smooth. Now the second item follows easily  by noticing that with $m=\ceil{-\alpha} \geq 1$, one has (see Subsection \ref{ss.Besov} for notations),
\begin{align*}\label{}
\sup_{z \in [0,T]\times \T^2}\sup_{\eta \in B_m, \lambda \in (0,1]} \frac {|\int_{B(z,\lambda)} \eta_x^\lambda(u) g(u) \mu(du)|} {\lambda^{\alpha}} 
& \leq C  \| g \|_\infty  \sup_z \sup_{\lambda>0} \frac {\int_{B(z,\lambda)} \mu(du)} {\lambda^{\alpha+4}} \\
& \leq C \|g \|_\infty \| \mu \|_{\Cs^\alpha}
\end{align*}
where we used the positivity in the first inequality.  \qed 
\medskip

\ni
\textit{Proof of Theorem \ref{th.main2}.}
It is enough here to follow almost verbatim the proof in Subsection \ref{ss.settle} which settles the fixed point argument. The only difference here is that the map defined in Definition \ref{d.FPmap},
\begin{align*}
F_{t,\Theta, v_0,R} \,\, : \,\, u \mapsto \Bigl( (s,x)\in \Lambda_t \mapsto K(- \Theta e^{\gamma u} - R)(s,x) + P(v_0)(s,x)\Bigr)
\end{align*}
is now considered with an input $\Theta \in \calC_\s^\alpha \cap M^+([0,T] \times \T^2)$ (instead of just $\calC_\s^\alpha$). By the above multiplication proposition \ref{pr.multPOS} together with Schauder's estimate  \ref{th.SchauderP} and Lemma \ref{l.contract}, this map is now well defined from $\calC_\s^{\alpha+2 -\kappa}(\Lambda_t)$ to itself if we only suppose $\alpha+ 2 -\kappa>0$ when $\Theta\in \calC_\s^\alpha \cap M^+$ (as opposed to the previous condition $2\alpha + 2 - \kappa>0$).
Note that to prove the existence of a fixed-point for this map, only the continuity in the second argument of the Bilinear form from Proposition \ref{pr.multPOS} is used. 
This is the reason for the second threshold $\gamma_{\mathrm{pos}}= 2\sqrt{2} - 2>\gamma_{dPD}$ in Theorem \ref{th.main2} where the optimal regularity bound $\g2 - 2\sqrt{2} \gamma$ reaches $-2$.  
\qed




\section{Handling the punctures}\label{s.Ext}

To conclude our proof of Theorem \ref{th.mainT} (see also Corollary \ref{C.CoupConst} below), we need to explain how one can extend the analysis we just carried out for the simplified SPDE~\eqref{e.DLSs} to the SPDE we are interested in on the Torus $\T^2$, namely~\eqref{e.DLT}. Let us directly rewrite this SPDE after a Da Prato-Debussche change of variable as well as a time-change replacing $\frac 1 {4\pi}$ by $\frac 1 2$ in front of $\Delta$:
\begin{align*}\label{}
\begin{cases}
& \p_t v = \frac 1 {2}  \Delta v - \pi \mu \gamma  e^{\alpha_1 \gamma G_\tau(x_1,\cdot)} \Wick{e^{\gamma \Phi}} e^{\gamma v}  -R +  \frac {\alpha_1 \pi} { \lambda_{\hat g_\tau} (\T^2)}  \\
& v(0,\cdot) =  w
\end{cases}
\end{align*}
The exact same strategy as for the simplified SPDE can be used. Only two places need some more attention:
\bnum
\item One needs to show that for all $\gamma < \hat \gamma_c= 2\sqrt{2}$, and all $\alpha_1<Q$, the measure 
\[
e^{\alpha_1 \gamma G_\tau(x_1,\cdot)} \Wick{e^{\gamma \Phi}} 
\]
still belongs a.s. to $\calC_\s^\alpha$, for sufficiently small regularity $\alpha$, similarly (but with different bounds) as in Proposition \ref{pr.reg}. See Proposition \ref{pr.RegLine} below. 
\item If $\gamma<2$, we also need to show that $e^{\alpha_1 \gamma G_\tau(x_1,\cdot)} \Wick{e^{\gamma \Phi_\eps}}$ converges in probability to $e^{\alpha_1 \gamma G_\tau(x_1,\cdot)} \Wick{e^{\gamma \Phi}}$ in the above $\calC_\s^\alpha(\Lambda_t)$ spaces (similarly as in Proposition \ref{pr.conv}). 
\enum
Let us only discuss the first technical issue (the second one being of similar difficulty).

\subsection{$q^{\th}$ moments near the time-line singularity} $ $

In space-time ($z=(t,x)\in \R\times \T^2$),the log-singularity $G(x_1,x)\sim \log \frac 1 {\|x-x_1\|_2}$ induces a singularity in $\dist(z,\R_{x_1})^{-\alpha_1 \gamma} = \|x-x_1\|_2^{-\alpha_1 \gamma}$ (where $\R_{x_1}$ denotes the \textbf{time-line} $\{(t,x_1)\in \R\times \T^2, t\in \R\}$).
A first step in our analysis of the regularity of the weighted measure 
\[
\dist(z,\R_{x_1})^{-\alpha_i \gamma} \Wick{e^{\gamma \Phi(z)}}dz
\]  
is to obtain $q^{th}$ moments on the measure of $r$-balls in space-time which lie ON the singular time-line $\R_{x_1}$. 
\begin{proposition}\label{pr.momR}
Let us fix $\gamma<\hat \gamma_c = 2\sqrt{2}$ and $\alpha_1<Q$. Define 
\[
\xi_{\R}(q):= \g2(q-q^2) +(4 -  \alpha_1 \gamma) q\,.
\]
If $\sup_{1\leq q < 8/\gamma^2} \{ \xi_{\R}(q) -2\} >0$ (N.B. for some values of $\gamma$ this imposes a more restrictive condition on $\alpha_1$ than the Seiberg bound $\alpha_1<Q$), then for any 
\[
q<q^*(\gamma, \alpha_1):=\frac 8 {\gamma^2} \wedge\sup\{ q:
\xi_{\R}(q) -2=0\}  
\]
there exists $C<\infty$ s.t. for any $r\leq 1$ and any space-time point $z$ on the time-line $\R_{x_1}$, 
\begin{align*}\label{}
\sup_{\eps>0}\Eb{(\int_{B_\s(z,r)}  \dist(z,\R_{x_1})^{-\alpha_i \gamma}  \Theta_\eps)^q} \leq C r^{\xi_{\R}(q)}
\end{align*}
where $B_\s(z,r)$ is the $r$-ball around $z$ for the parabolic distance $\|\cdot\|_\s$. 
\end{proposition}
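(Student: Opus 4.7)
The plan is to combine a parabolic scaling step (via Kahane's convexity inequality) that reduces the estimate to a uniform moment bound on a unit ball, with a dyadic decomposition of that unit ball into spatial annular shells around the singular time-line.

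I would start by fixing $z=(t_0,x_1)\in \R_{x_1}$ and rescaling: set $u=z+r\cdot v$ with $v\in B_\s(0,1)$ and $r\cdot v:=(r^2 v_0, r v_1)$, so that $du=r^4\,dv$ and $\dist(u,\R_{x_1})=r\|v_1\|_2$. The estimate~\eqref{e.Ess1eps} shows that the rescaled field $v\mapsto\Phi_\eps(z+r\cdot v)$ on $B_\s(0,1)$ has covariance $\log(1/r) - \log(\|v-v'\|_\s+\eps/r) \pm O(1)$, which matches up to additive constants that of an independent sum $Y + \tilde\Phi_{\eps/r}(v)$ with $Y\sim N(0,\log(1/r))$ independent of $\tilde\Phi_{\eps/r}\overset{(d)}{=}\Phi_{\eps/r}$. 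Kahane's convexity inequality (Lemma~\ref{l.K}), together with the identity $\calQ_\eps(0)=\calQ_{\eps/r}(0)+\log(1/r)+O(1)$, then yields
\[
\Eb{(\int_{B_\s(z,r)}\dist(u,\R_{x_1})^{-\alpha_1\gamma}\Theta_\eps(u)\,du)^q} \leq C\, r^{\xi_\R(q)}\cdot \Eb{(\int_{B_\s(0,1)}\|v_1\|_2^{-\alpha_1\gamma}\Theta_{\eps/r}(v)\,dv)^q},
\]
where the prefactor is the product of $r^{(4-\alpha_1\gamma)q}$ (Lebesgue change of variable plus weight), $r^{-\g2 q^2}=\Eb{e^{q\gamma Y}}$, and $r^{\g2 q}$ (Wick shift).

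It then suffices to bound the unit-ball integral $J_{\eps'}:=\int_{B_\s(0,1)}\|v_1\|_2^{-\alpha_1\gamma}\Theta_{\eps'}(v)\,dv$ in $L^q$ uniformly in $\eps'>0$. I would dyadically decompose $B_\s(0,1)=\bigsqcup_{k\geq 0} A_k$ with $A_k:=\{v:\, 2^{-k-1}\leq \|v_1\|_2 \leq 2^{-k}\}$; each shell has weight of order $2^{k\alpha_1\gamma}$, parabolic volume of order $2^{-2k}$, and can be covered by $N_k\asymp 2^{2k}$ parabolic balls $B^{(k)}_j$ of radius $2^{-k-1}$. Proposition~\ref{pr.mom} then bounds each piece $X_{k,j}:=\int_{B^{(k)}_j}\Theta_{\eps'}$ by $\Eb{X_{k,j}^q}\leq C(2^{-k})^{\xi_\s(q)}$, while $\Eb{X_{k,j}}=(2^{-k})^4$.

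The hard part will be controlling $\Eb{(\sum_{j=1}^{N_k} X_{k,j})^q}$ and summing the resulting bound in $k$. A naive $L^q$-triangle inequality $\|\sum_j X_{k,j}\|_q \leq N_k\|X_{k,j}\|_q$ only produces the stronger-than-needed requirement $\xi_\R(q)>2q$, which is insufficient as soon as $q>1$. To reach the sharp condition $\xi_\R(q)>2$ stated in the proposition, I would exploit the asymptotic decorrelation between integrals over parabolic balls of radius $2^{-k}$ that are well-separated in a given shell (the same mechanism driving the Kahane-iteration step of Lemma~\ref{l.T0}), dominating the $X_{k,j}$ by independent copies up to bounded multiplicative factors via a further application of Lemma~\ref{l.K}. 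For concentration-regime values $q<4/\gamma^2$ this produces $\Eb{(\sum_j X_{k,j})^q}\lesssim (N_k\Eb{X_{k,j}})^q = 2^{-2kq}$, so that after multiplying by the weight $2^{k\alpha_1\gamma q}$ and taking $q$-th roots the geometric sum over $k$ converges iff $\alpha_1\gamma<2$, which is $\xi_\R(1)>2$. In the heavy-tail regime $q\geq 4/\gamma^2$ the bound $\Eb{(\sum_j X_{k,j})^q}\lesssim N_k\Eb{X_{k,j}^q}=2^{k(2-\xi_\s(q))}$ takes over and the same summation converges precisely under $\xi_\R(q)>2$. Combining with the first step delivers $C\,r^{\xi_\R(q)}$ uniformly in $\eps$ for all $q<q^*(\gamma,\alpha_1)$; the case $q<1$ is easier and follows directly from $(\sum x_j)^q\leq \sum x_j^q$ together with the concave counterpart of Kahane's inequality (Kesten), which recovers the same threshold $\xi_\R(q)>2$.
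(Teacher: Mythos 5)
Your scaling step is sound and its exponent bookkeeping ($r^{(4-\alpha_1\gamma)q}$ from the volume and the weight, $r^{-\frac{\gamma^2}{2}q^2}$ from the common Gaussian of variance $\log\frac1r$, $r^{\frac{\gamma^2}{2}q}$ from the Wick shift) reproduces exactly the factor $r^{\xi_{\R}(q)}$; this mirrors the paper, which however runs the two steps in the opposite order (it first proves a global weighted moment bound, Lemma \ref{l.T0R}, and then rescales as in Proposition \ref{pr.mom}). The genuine gap is in your per-shell step. The $N_k\asymp 2^{2k}$ parabolic balls of radius $2^{-k}$ covering the shell $A_k$ sit along a time-column of unit parabolic length, and by \eqref{e.Ess1eps} the field correlation between two of them at parabolic distance $d\in[2^{-k},1]$ is of order $\log(1/d)$, i.e.\ as large as $k\log 2$ for neighbouring balls. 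They therefore cannot be dominated by independent copies ``up to bounded multiplicative factors'': a one-shot application of Kahane's inequality (Lemma \ref{l.K}) in the spirit of Lemma \ref{l.KahBound} forces you to add a common Gaussian of variance $\sim k\log 2$, whose cost $2^{\frac{\gamma^2}{2}q^2 k}$ destroys the claimed bounds. Your two per-shell estimates, $\Eb{(\sum_j X_{k,j})^q}\lesssim 2^{-2kq}$ for $q<4/\gamma^2$ and $\lesssim 2^{k(2-\xi_\s(q))}$ otherwise, are in fact true --- they are the sharp moment bounds for the chaos restricted to a tube of width $2^{-k}$ around the time-line --- but they do not follow from Proposition \ref{pr.mom} plus a decorrelation remark; proving them uniformly in $k$ requires its own multiscale argument along the time-line, which is essentially the content of the paper's Lemma \ref{l.T0R}. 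There the weight $\dist(\cdot,\R_{x_1})^{-\alpha_1\gamma}$ is absorbed scale by scale into a Kahane-type recursion over regularisation scales, with $\delta^{-2}$ (instead of $\delta^{-4}$) diagonal boxes along the line and contraction factor $\delta^{\xi_{\R}(q)-2}$, so that the single condition $\xi_{\R}(q)-2>0$ closes the iteration with no need for your Rosenthal-type regime split. In other words, the heart of the proof is precisely the step you delegate to an unproved independence claim; once you have that tube/shell bound you essentially have the paper's global lemma anyway.

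Two further points. Your reduction implicitly assumes $\eps\leq r$; the coarse regime $\eps>r$ needs the separate (easy) Gaussian-domination argument used at the end of the proof of Proposition \ref{pr.mom}. Also, your concentration-regime summation yields the condition $\alpha_1\gamma<2$, i.e.\ $\xi_{\R}(1)>2$, which is not implied by the stated hypothesis $\sup_{1\leq q<8/\gamma^2}\{\xi_{\R}(q)-2\}>0$; for exponents $q$ at which $\xi_{\R}(q)\leq 2$ the intended route is to obtain the unit-ball (global) bound at some $q_0$ with $\xi_{\R}(q_0)>2$ and pass to smaller $q$ by Jensen before rescaling, rather than to make the shell sum converge at that $q$ directly.
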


\ni
\textit{Proof.}
We will only sketch the main steps as the strategy is very similar to the proof of Proposition \ref{pr.mom} except the situation is less homogeneous due to the presence of the time-line $\R_{x_1}$. 

First one needs to prove a global integrability lemma analogous to Lemma \ref{l.T0}. 
\begin{lemma}\label{l.T0R}
For any $\gamma<\hat \gamma_c = 2 \sqrt{2}$ and any $(\alpha_1,q)$ which satisfy the same assumptions as in Proposition \ref{pr.momR}, 
\begin{align*}\label{}
\sup_{\eps>0} \Eb{(\int_{D=[0,1]\times[-1/2,1/2]^2} \dist(z,\R_{x_1})^{-\alpha_i \gamma}  \Theta_{\eps})^q} <\infty\,,
\end{align*}
where $D$ represents here a square which surrounds the time-line $\R_{x_1}$ (see Figure \ref{f.TimeLine}). 
\end{lemma}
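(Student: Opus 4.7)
The plan is to combine a dyadic annular decomposition of $D$ around the singular time-line $\R_{x_1}$ with the multifractal spectrum from Proposition \ref{pr.mom}. Since $\dist(z, \R_{x_1}) = \|x-x_1\|_2$ when $z = (t, x)$, the weight depends only on the spatial distance to $x_1$.

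First I would dispose of the easy range $0 \leq q \leq 1$ via Jensen's inequality. Using $\Eb{\Theta_\eps(z)} = 1$,
\[
\Eb{\bigl(\int_D \|x-x_1\|_2^{-\alpha_1 \gamma}\, \Theta_\eps(dz)\bigr)^q} \;\leq\; \Bigl(\int_D \|x-x_1\|_2^{-\alpha_1 \gamma}\, dz\Bigr)^q,
\]
and the right-hand side is finite iff $\alpha_1 \gamma < 2$, i.e. $\xi_\R(1) > 2$, which is implicit in the hypothesis of Proposition \ref{pr.momR}.

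The substantive case is $q > 1$. Decompose $D = \bigsqcup_{k \geq 0} A_k$ where
\[
A_k := \bigl\{ z \in D : 2^{-(k+1)} \leq \|x - x_1\|_2 < 2^{-k} \bigr\}.
\]
On $A_k$ the weight is at most $2^{(k+1)\alpha_1 \gamma}$, while $A_k$ has parabolic volume $O(2^{-2k})$ and can thus be covered by $O(2^{2k})$ parabolic tiles of side-length $2^{-k}$. Applying Minkowski's inequality across annuli and tiles together with the uniform moment bound $\sup_\eps \Eb{\Theta_\eps(Q)^q}^{1/q} \leq C (2^{-k})^{\xi_\s(q)/q}$ from Proposition \ref{pr.mom} gives
\[
\Eb{\bigl(\int_D \|x-x_1\|_2^{-\alpha_1\gamma}\, \Theta_\eps(dz)\bigr)^q}^{1/q} \;\leq\; C \sum_{k \geq 0} 2^{k(\alpha_1\gamma + 2 - \xi_\s(q)/q)},
\]
which is summable provided $\xi_\s(q)/q > 2 + \alpha_1 \gamma$. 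The algebraic identity $\xi_\s(q) - (2+\alpha_1\gamma)q = \xi_\R(q) - 2q$ shows this is equivalent to $\xi_\R(q) > 2q$.

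The hardest step will be to push the moment bound all the way up to the sharper threshold $\xi_\R(q) > 2$ advertised in Proposition \ref{pr.momR}. My plan is to replicate the Kahane-convexity-plus-iteration scheme from the proof of Lemma \ref{l.T0} now run on the weighted measure directly. Under parabolic rescaling by $\delta$, a sub-tile of $D$ straddling $\R_{x_1}$ transforms consistently, and the weight $\|x - x_1\|_2^{-\alpha_1 \gamma}$ pulls out a global prefactor $\delta^{-\alpha_1 \gamma q}$ from the diagonal contribution in the analogue of~\eqref{e.DDD}. The resulting recursion carries effective exponent $\xi_\R(q) - 2$ in place of $\xi_\s(q) - 4$, and the two-step bootstrap at the end of the proof of Lemma \ref{l.T0} then propagates the bound up to the claimed threshold $q^*(\gamma, \alpha_1)$. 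The technical delicacy here is that the weight is not translation-invariant, so the iteration has to be set up on a single slab surrounding the time-line rather than on independent iid tiles as in the translation-invariant Lemma \ref{l.T0}.
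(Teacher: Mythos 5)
Your dyadic-annulus computation is correct as far as it goes, but note what it actually proves: Minkowski across the $O(2^{2k})$ parabolic tiles of an annulus costs a factor $2^{2k}$ in the $L^q$ norm, and this is exactly why you end up needing $\xi_{\R}(q)>2q$ rather than the threshold $\xi_{\R}(q)>2$ under which the lemma is stated. For $q>1$ this is a strictly smaller range of $(\alpha_1,q)$ (for instance it degrades the final exponent one can extract for balls on the time-line, hence the thresholds in Theorem \ref{th.mainP}), so on its own this part does not prove the statement; it is only a warm-up. The actual content of the lemma lives precisely in the regime $2<\xi_{\R}(q)\leq 2q$, and there your argument is only a declared plan, not a proof.

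That plan, however, is essentially the paper's proof, and you have correctly identified the key quantitative point: replacing the full space-time grid of Lemma \ref{l.T0} by the $\delta^{-2}$ boxes $[k\delta^2,(k+1)\delta^2]\times[-\tfrac\delta2,\tfrac\delta2]^2$ along $\R_{x_1}$ turns the recursion exponent $\xi_\s(q)-4$ into $\xi_{\R}(q)-2$, the weight contributing the homogeneous prefactor $\delta^{-\alpha_1\gamma q}$ under parabolic rescaling. One correction to your final caveat: the lack of translation invariance of the weight is not an obstruction to the iid structure. The weight $\dist(z,\R_{x_1})^{-\alpha_1\gamma}$ depends only on the spatial variable, so it is invariant under the time-translations that generate the column of boxes; the paper assigns iid rescaled copies $Z_{\delta\eps}^Q$ to the boxes of an alternating family $\calQ_1$ exactly as in Lemma \ref{l.T0}, and this independence is what allows the non-diagonal terms of the $n$-th power expansion to be controlled by lower moments (here absorbed into the constant $K$ via the induction on $n-1<q\leq n$), leaving only the $\delta^{-2}$ diagonal terms that produce $\xi_{\R}(q)-2$. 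You would also need to say explicitly how the part of $D$ at spatial distance $\geq\delta/2$ from $x_1$ is handled: there the weight is bounded (by a $\delta$-dependent constant), so the $L^q$ triangle inequality together with the unweighted Lemma \ref{l.T0} gives a uniform-in-$\eps$ constant, which is what closes the recursion. With these points filled in, your sketch becomes the paper's argument; without them it remains a gap.
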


\ni
\textit{Proof.} As in Lemma \ref{l.T0}, we argue using scaling arguments around the time-line $\R_{x_1}$. Instead of using a $1+2$-dimensional grid of time-spaces translates of $[0,\delta^2]\times [0,\delta]$, we only use a time-one-dimensional set of translates of $[0,\delta^2]\times[-\frac \delta 2, \frac \delta 2]$ as in Figure \ref{f.TimeLine}.
\begin{figure}[!htp]
\begin{center}
\includegraphics[width=0.65\textwidth]{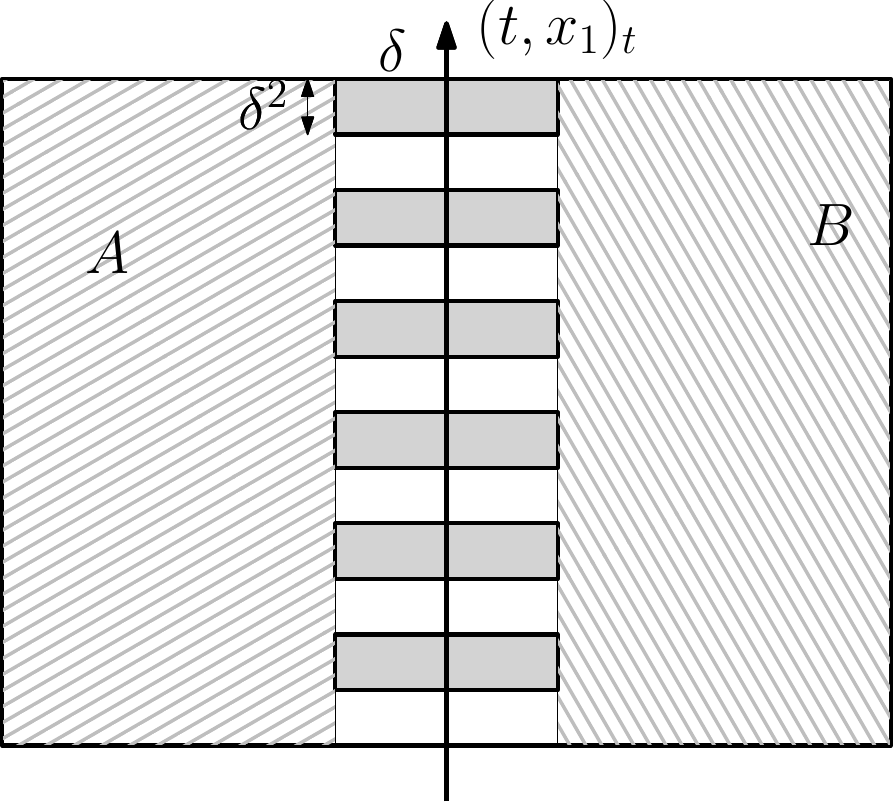}
\end{center}
\caption{}\label{f.TimeLine}
\end{figure}
We may use the triangle inequality to control the $q^{th}$ moments on parts $A$ and $B$ in Figure \ref{f.TimeLine} which we already know have finite $q^{th}$ moments. The $\delta^{-2}$ boxes along $\R_{x_1}$ are split as in Lemma \ref{l.T0} into two alternating sets $\calQ_1$ and $\calQ_2$ in order to use Kahane convexity inequality. We thus end-up for any $\eps, \delta>0$, with 
\begin{align*}\label{}
\Eb{(\int_{[0,1]\times[-1/2,1/2]^2}  \dist(z,\R_{x_1})^{-\alpha_i \gamma}  \Theta_{\delta \eps})^q} 
& \leq C \Eb{(\int_{\calQ_1}  \dist(z,\R_{x_1})^{-\alpha_i \gamma}  \Theta_{\delta \eps})^q}  + K\,,\\
\end{align*}
where $K$ is the uniform upper bound in $\eps$ which comes from the parts $A$ and $B$ in Figure \ref{f.TimeLine}. As in Lemma \ref{l.T0}, one works inductively on the integer $n$ s.t.  $n-1< q \leq n$ which allows us to avoid dealing with the non-diag terms in $\Eb{\Bigl(\sum_{Q\in \calQ_1}  (\int_Q e^{\gamma Z_{\delta \eps}^{Q}(z) - \g2 \calQ_{\eps}(0)})^{\frac q n} \Bigr)^n}$ (giving a larger value $K$ above). Only the diagonal-terms remain. There are $\delta^{-2}$ such terms (as opposed to $\delta^{-4}$ terms in the homogeneous case). Overall, we find 
\begin{align*}\label{}
&\Eb{(\int_{[0,1]\times[-1/2,1/2]^2}  \dist(z,\R_{x_1})^{-\alpha_i \gamma}  \Theta_{\delta \eps})^q} 
  \\
& \hskip 0.5 cm \leq C \delta^{4q-2} \delta^{\g2(q-q^2)} \delta^{-\alpha_i \gamma q}  \Eb{(\int_{[0,1]\times[-1/2,1/2]^2}  \dist(z,\R_{x_1})^{-\alpha_i \gamma}  \Theta_{\eps})^q}  + K\,,\\
\end{align*}
In order to conclude as in Lemma \ref{l.T0}, the crucial point is to have $\xi_{\R}(q)-2>0$ which follows from our set of assumptions. 
\qed

Now we deduce Proposition \ref{pr.momR} out of Lemma \ref{l.T0R} by scaling arguments (and Kahane inequality) exactly as we did in Proposition \ref{pr.mom}. 
\qed

\subsection{Handling $r$-balls which intersect the time-line}
Let $r\in(0,1]$. We wish to understand what is the order of magnitude of the largest mass (under the weighted measure $e^{\alpha_1 \gamma G(x_1,x)} \Theta(dz)$) among the $O(r^{-2})$ $\|\cdot\|_\s$ $r$-balls which intersect the time-line $\R_{x_1}$. For each of these squares and $q$ satisfying the conditions of Proposition \ref{pr.momR}, 
\begin{align*}\label{}
\Pb{\int_{B_\s(z,r)}  \dist(z,\R_{x_1})^{-\alpha_i \gamma}  d\Theta  \geq r^\beta} \leq r^{-\beta q} r^{\xi_\R(q)}\,.
\end{align*}
As there are $r^{-2}$ squares, one is looking for the largest possible exponent $\beta=\beta(\alpha_1,\gamma)>0$ s.t. 
\[
\sup_{0< q < \frac 8 {\gamma^2} \wedge\sup\{ q:
\xi_{\R}(q) -2=0\} } \{ \xi_\R(q) - \beta q -2 \}  >0 
\]

By a straightforward computation, it gives us that $\beta$ needs be smaller than 
\[
\bar \beta(\alpha_1,\gamma):= \g2 - (\alpha_1+2)\gamma + 4
\]
\margin{N.B. This is $\bar \beta = \g2 - 2\sqrt{2}\gamma + 4$ in the bulk}
(Note that since $\xi_\R(q) - 2 \geq \xi_\R(q) - \beta q -2$,  we have that if $\bar \beta(\alpha_1,
\gamma)>0$, then necessarily $\sup\{ q : \xi_\R(q) -2 \} >0$).  We conclude from the above estimates the following regularity property on the time-line $\R_{x_1}$.
\begin{lemma}\label{l.OnRline}
If $\gamma<\hat \gamma_c=2\sqrt{2}$ and $\alpha_1$ are such that $\bar\beta(\alpha_1,\gamma)>0$, then 
for any $\beta<\bar \beta(\alpha_1,\gamma)$, one has a.s. 
\begin{align*}\label{}
\sup_{z=(t,x)\in \R_{x_1}, r>0} \frac{\int_{B_\s(z,r)}  \dist(z,\R_{x_1})^{-\alpha_i \gamma}  d\Theta}
{r^{\beta}} & <\infty\,.
\end{align*}
\end{lemma}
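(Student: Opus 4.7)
The argument proceeds by a dyadic chaining combined with Borel--Cantelli, adapted to the parabolic geometry of the singular time-line $\R_{x_1}$ and driven by the moment bounds of Proposition \ref{pr.momR}. Since $\Theta$ is time-stationary and the claim is local in $z$, I first reduce to a bounded time-window, say $\R_{x_1}\cap([-T,T]\times\T^2)$, and then recover all of $\R_{x_1}$ by a countable union over integer time-translates. Next fix $\beta<\bar\beta(\alpha_1,\gamma)$ and choose an intermediate exponent $\beta'\in(\beta,\bar\beta(\alpha_1,\gamma))$. By definition of $\bar\beta$ as the supremum of exponents $\beta$ for which $\sup_{q<q^*(\gamma,\alpha_1)}\{\xi_\R(q)-\beta q-2\}>0$, there exists an admissible $q\in(0,q^*(\gamma,\alpha_1))$ such that $\delta:=\xi_\R(q)-\beta'q-2>0$.

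At dyadic scale $r_n=2^{-n}$, cover the chosen bounded portion of $\R_{x_1}$ by $N_n=O(r_n^{-2})$ parabolic balls $B_\s(z_{n,i},r_n)$ with centers on the time-line (the $r_n^{-2}$ comes from time scaling like $r_n^{2}$ in the parabolic metric $\|\cdot\|_\s$). Markov's inequality together with Proposition \ref{pr.momR} yield
\[
\Pb{\int_{B_\s(z_{n,i},r_n)}\dist(\cdot,\R_{x_1})^{-\alpha_1\gamma}\,d\Theta\geq r_n^{\beta'}}\leq C\,r_n^{\xi_\R(q)-\beta'q}.
\]
Taking a union bound over the $N_n$ centers, the probability is at most $C r_n^{\delta}$, which is summable in $n$. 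By Borel--Cantelli, a.s.\ for all $n$ sufficiently large, every cover ball at scale $r_n$ satisfies $\int_{B_\s(z_{n,i},r_n)}\dist(\cdot,\R_{x_1})^{-\alpha_1\gamma}\,d\Theta\leq r_n^{\beta'}$.

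It remains to transfer this dyadic control to arbitrary $(z,r)$ with $z\in\R_{x_1}$. For any $r\in(0,1]$ pick $n$ with $r_n\leq r<2r_n$; then $B_\s(z,r)$ is contained in a bounded, dimension-independent number of cover balls $B_\s(z_{n,i},r_n)$ (since both sets have centers on $\R_{x_1}$ and comparable radii). Summing contributions using positivity of the measure gives $\int_{B_\s(z,r)}\dist(\cdot,\R_{x_1})^{-\alpha_1\gamma}\,d\Theta\leq O(1)\,r_n^{\beta'}\leq C\,r^{\beta'}\leq C'\,r^{\beta}$ for $r\leq 1$, since $\beta'>\beta$; the finitely many relevant radii $r\geq 1$ are absorbed into the constant via the global integrability of Lemma \ref{l.T0R}. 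The only non-routine bookkeeping is to verify that the optimizing $q$ in the chaining actually lies in the admissible range $q<q^*(\gamma,\alpha_1)$: unconstrained maximization of $q\mapsto \xi_\R(q)-\beta q$ yields $q_*=\tfrac12+(4-\alpha_1\gamma-\beta)/\gamma^2$ with maximum value $(\gamma^2/2+4-\alpha_1\gamma-\beta)^2/(2\gamma^2)$, which equals $2$ precisely at $\beta=\bar\beta(\alpha_1,\gamma)$. The assumption $\bar\beta(\alpha_1,\gamma)>0$ then ensures that, for $\beta'$ close enough to $\bar\beta$, the maximizer $q_*$ sits strictly below both $8/\gamma^2$ and $\sup\{q:\xi_\R(q)=2\}$, so Proposition \ref{pr.momR} applies and the scheme goes through. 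This is the place where the restrictiveness of the hypothesis $\bar\beta>0$ becomes essential, and is really the only obstacle in the argument.
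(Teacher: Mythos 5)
Your proposal is correct and follows essentially the same route as the paper: a Markov bound from Proposition \ref{pr.momR} plus a union bound over the $O(r^{-2})$ parabolic $r$-balls meeting $\R_{x_1}$, with the exponent condition $\sup_q\{\xi_\R(q)-\beta q-2\}>0$ producing exactly the threshold $\bar\beta(\alpha_1,\gamma)$ (your optimizer is $q_*=2/\gamma<8/\gamma^2$, indeed admissible), the only difference being that you spell out the dyadic Borel--Cantelli/Kolmogorov chaining that the paper leaves implicit. One small caveat: the sup over the full unbounded time-line cannot be recovered by a countable union over integer time-translates (a sup over infinitely many identically distributed windows need not be finite), but the estimate is only needed, and the paper's own argument only establishes it, locally in time and for $r\leq 1$, so this does not affect the substance.
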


\begin{remark}\label{}
As in the homogeneous case, this estimate is in fact optimal and shows that the measure $\mu(dz) := \dist(z,\R_{x_1})^{-\alpha_i \gamma}  \Theta(dz)$ does not belong a.s. to the regularity spaces $\calC_\s^\alpha$ if $\alpha \geq \bar \beta(\alpha_1,\gamma)-4 =  \g2 - (\alpha_1+2)\gamma$. 
\end{remark}

\begin{remark}\label{}
Let us make an important remark here: if one chooses to push $\alpha_1$ all the way to its Seiberg bound $\alpha_1=Q=\frac 2 \gamma + \frac \gamma 2$, then the regularity of the weighted measure  $\mu(dz) := \dist(z,\R_{x_1})^{-\alpha_i \gamma}  \Theta(dz)$ drops below $-2 - 2\gamma < -2$. As such the regularity is too small to apply our previous techniques (Section \ref{s.TORUS} as well as Section \ref{s.POS} which relies on the positivity). This is why we have less optimal conditions for example in Corollary \ref{C.CoupConst}. Note that weighted Besov norms in non-compact settings such as the one used in \cite{JcIsing} are likely to give slightly better thresholds here. 
\end{remark}

It remains to control the following $r$-balls: 
\subsection{Controlling the $r$-balls away from the time-line} 

Clearly, the condition will only consist in the simpler one $q<8/\gamma^2$ here as opposed to the balls intersecting the time-line in Lemma \ref{pr.momR}. We shall prove the following result. 
\begin{lemma}\label{l.distR}
For any $\gamma<\hat \gamma_c= 2\sqrt{2}$ and any  $\alpha_1$, there exists $C=C(\omega)<\infty$ a.s. such that for any $r\in(0,1]$ and any $r$-ball $Q$ (for the $\|\cdot\|_\s$-metric) at distance $\delta \geq r$ from the time-line, we have 
\[
\int_{B_r}  \dist(z,\R_{x_1})^{-\alpha_1 \gamma}  d\Theta \leq C \delta^{\tilde \beta - \alpha_1 \gamma } r^{\beta}\,,
\]
for any exponents satisfying $\tilde \beta < \frac{\gamma}{2\sqrt{2}}$ and $\beta< \bar \beta = \g2 - 2\sqrt{2}\gamma + 4$.  
\end{lemma}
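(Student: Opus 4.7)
The guiding principle is that on an $r$-ball $B=B_\s(z_0,r)$ whose centre $z_0$ sits at parabolic distance $\delta\geq r$ from the time-line $\R_{x_1}$, the weight $\dist(\cdot,\R_{x_1})^{-\alpha_1\gamma}$ is essentially constant: every $z\in B$ satisfies $\dist(z,\R_{x_1})\in[\delta-r,\,2\delta+r]$, hence up to a multiplicative constant depending only on $\alpha_1\gamma$ one has $\dist(z,\R_{x_1})^{-\alpha_1\gamma}\leq C\delta^{-\alpha_1\gamma}$ uniformly on $B$. This reduces the problem to bounding the unweighted Chaos mass $\Theta(B)$, on top of which the prefactor $\delta^{-\alpha_1\gamma}$ will appear. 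The extra factor $\delta^{\tilde\beta}$ in the lemma is then extracted from the fact that the collection of balls at distance $\delta$ from the line is much smaller (scales like $\delta^{2}/r^{4}$) than the full collection of $r$-balls, so that the union bound over balls at distance $\delta$ is cheaper the farther one moves from $\R_{x_1}$.

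I would carry out the argument in four steps. First, for any $q\in[1,8/\gamma^{2})$, combine the previous pointwise comparison with Proposition~\ref{pr.mom} to obtain
\begin{equation*}
\EB{\Bigl(\int_{B}\dist(z,\R_{x_1})^{-\alpha_1\gamma}\,d\Theta(z)\Bigr)^{q}}
\leq C\,\delta^{-\alpha_1\gamma q}\,r^{\xi_\s(q)}.
\end{equation*}
Second, for each dyadic scale $r=2^{-n}$ and each dyadic distance $\delta=2^{-k}$ with $k\leq n$, cover the parabolic shell $\{z:\dist(z,\R_{x_1})\in[\delta,2\delta]\}$ (whose Lebesgue volume is of order $\delta^{2}$ on a bounded time window) by an $r$-net of centres of size $N_{k,n}\leq C\,\delta^{2}/r^{4}$. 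Third, apply Markov to the $q$-th moment bound above together with a union bound over this $r$-net at threshold $\lambda_{n,k}:=C\,\delta^{\tilde\beta-\alpha_1\gamma}r^{\beta}$, which yields
\begin{equation*}
\mathbb{P}\Bigl[\exists\,B\text{ in the shell : }\textstyle\int_{B}\dist^{-\alpha_1\gamma}d\Theta\geq\lambda_{n,k}\Bigr]
\leq C\,\delta^{\,2-\tilde\beta q}\,r^{\,\xi_\s(q)-4-\beta q}.
\end{equation*}
Fourth, carry out a Borel--Cantelli argument over the double sum in $(n,k)$, which converges as soon as $2-\tilde\beta q>0$ and $\xi_\s(q)-4-\beta q>0$; a standard rounding step then upgrades the bound from dyadic $(r,\delta)$ to all $r\in(0,1]$ and all $\delta\geq r$, at the cost of absorbing constants.

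It remains to optimize over $q\in[1,8/\gamma^{2})$. Choosing $q=q^{*}:=2\sqrt{2}/\gamma$ (which lies in the admissible range since $\gamma<2\sqrt{2}$) makes the second condition equivalent to $\beta<(\xi_\s(q^{*})-4)/q^{*}=\bar\beta$, and turns the first into $\tilde\beta<2/q^{*}=\gamma/\sqrt{2}$, both of which are comfortably satisfied by the hypotheses $\beta<\bar\beta$ and $\tilde\beta<\gamma/(2\sqrt{2})$ of the lemma (the weaker threshold on $\tilde\beta$ is what is actually needed later when combining this lemma with Lemma~\ref{l.OnRline} to recover the functional $R(\hat\alpha,\gamma)$ appearing in Theorem~\ref{th.mainP}). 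The only mild technical point is checking that the deterministic volume count of balls in the shell is indeed $O(\delta^{2}/r^{4})$ in the parabolic metric (parabolic balls of radius $r$ have Lebesgue volume comparable to $r^{4}=r^{2}\cdot r^{2}$, and the $\delta$-annulus around $\R_{x_1}$ in $[0,T]\times\T^{2}$ has Lebesgue volume of order $T\delta^{2}$); everything else is a direct adaptation of the multi-fractal spectrum arguments already developed in the proof of Proposition~\ref{pr.mom}.
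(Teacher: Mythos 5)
Your proposal is correct and follows essentially the same route as the paper's proof: freeze the weight $\dist(\cdot,\R_{x_1})^{-\alpha_1\gamma}\lesssim \delta^{-\alpha_1\gamma}$ on each ball, apply the multifractal moment bound of Proposition \ref{pr.mom} with Markov and a union bound over the $r$-balls in the $\delta$-shell, optimize at $q=\hat q=\tfrac{2\sqrt{2}}{\gamma}$ with $\beta<\bar\beta$, and conclude by a dyadic Borel--Cantelli argument (which the paper leaves implicit). The only real difference is your shell count $\delta^{2}r^{-4}$ versus the paper's $\delta r^{-4}$: your count is the sharp geometric one and yields the weaker requirement $\tilde\beta<\gamma/\sqrt{2}$, whereas the paper's conservative (over-)count is what produces the stated threshold $\tilde\beta<\tfrac{\gamma}{2\sqrt{2}}$, so your argument in fact proves a slightly stronger statement than the lemma as written.
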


\ni
\textit{Proof.}
Let us consider a grid of $r$-squares whose distance from $\R_{x_1}$ is in $[\delta,2 \delta]$, with $\delta \geq r$.  For each such $r$-square $Q$, one has for any $q<8/\gamma^2$, 
\begin{align*}\label{}
\Pb{\Theta(Q)  \geq r^{\beta} \delta^{\tilde \beta}}
& \leq r^{-\beta q} \delta^{-\tilde \beta q}  r^{\xi_\s(q)}
\end{align*}
\margin{We shall put the singularity $\delta^{- \alpha_1 \gamma q}$ later in the analysis}
Now, as there are $\delta r^{-4}$ such squares in this grid, we get the following by union-bound:  \margin{C: May 2018: SHOULDN'T THIS BE $\delta r^{-4}$ SQUARES ??   Probably this is what justifies $\hat q=2\sqrt{2}/\gamma$. \textbf{Checked, yes indeed, it must be 4!}. Also the notation $q^*$ was very misleading. CHECK what is is the use of $q^*$ in literature : $4/\gamma^2$ or rather ?? $2/\gamma$ ?}
\[
\Pb{\exists Q \text{ at dist. in $[\delta, 2\delta]$ s.t.  } \Theta(Q)  \geq r^{\beta} \delta^{\tilde \beta} } \leq O(1) \delta r^{-4} r^{-\beta q} \delta^{-\tilde \beta q} r^{\xi_\s(q)}
\]
When $\delta$ is fixed, we know what is the best possible exponent $\beta$: it must be less than $\bar \beta = \g2 - 2 \sqrt{2} \gamma +4$. Also, the optimal exponent in the bulk to run Kolmogorov's argument (see for example \cite{LBM} version 2) is $q=\hat q=\frac{2\sqrt{2}} \gamma$. Let us then choose any $\beta<\bar \beta$ and $q=\hat q$. Plugging these in the last displayed upper bound gives us 
\begin{align*}\label{}
\delta^{1-\tilde \beta \hat q} r^{\eps} = \delta^{1- \tilde \beta \frac{2\sqrt{2}} \gamma} r^\eps
\end{align*}
As such we see that we are free to take any $\tilde \beta < \frac{\gamma}{2\sqrt{2}}$ and Lemma \ref{l.distR} is proved.  (N.B. the proof is sub-optimal here and better exponents could be obtained by tuning $\hat q$ both as a function of $\gamma$ and also $\delta, r$). 
\qed

\subsection{Effects of the punctures on the regularity}

Combining Lemmas \ref{l.OnRline} and \ref{l.distR}, we thus obtain the following result. 
\begin{proposition}\label{pr.RegLine}
If $\gamma< 2\sqrt{2}$ and $\alpha_1<Q$ are such that $\bar \beta(\alpha_1,\gamma)>0$, then 
the measure 
$e^{\alpha_1 \gamma G_\tau(x_1,\cdot)} \Wick{e^{\gamma \Phi}}$ 
a.s. belongs to  $\calC_\s^\alpha$, for any 
\[
\alpha < \bigl[(\frac \gamma {2\sqrt{2}} - \alpha_1 \gamma)\wedge 0 \bigr] + \g2 - 2\sqrt{2} \gamma\,.
\]
\end{proposition}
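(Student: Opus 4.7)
The plan is to reduce the Besov bound to a ball--mass estimate using the positivity of $\mu := e^{\alpha_1 \gamma G_\tau(x_1,\cdot)}\Wick{e^{\gamma \Phi}}$, and then to combine the two regimes covered by Lemmas~\ref{l.OnRline} and~\ref{l.distR}. More precisely, exactly as in the proof of Proposition~\ref{pr.multPOS}, for any $z\in [-T,T]\times \T^2$, any $\lambda \in (0,1]$ and any $f\in B_m$ with $m=\lceil -\alpha\rceil$, the fact that $\mu$ is a positive measure and that $S_z^\lambda f$ is supported in $B_\s(z,\lambda)$ and bounded by $C\lambda^{-4}$ gives
\begin{equation*}
\frac{|\<{\mu, S_z^\lambda f}|}{\lambda^\alpha} \;\leq\; C\, \frac{\mu(B_\s(z,\lambda))}{\lambda^{\alpha+4}}\,.
\end{equation*}
Hence it is enough to show that, a.s., $\mu(B_\s(z,\lambda))\leq C(\omega)\,\lambda^{\alpha+4}$ uniformly in $z$ and $\lambda\in(0,1]$, for any $\alpha$ strictly below the claimed threshold.

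I would then split this estimate into two cases according to the Euclidean distance $\delta := \dist_\s(z,\R_{x_1})$ from the centre $z$ to the singular time-line.

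\textbf{Case 1: $\delta \leq 2\lambda$.} In this case the ball $B_\s(z,\lambda)$ is contained in $B_\s(z^\star,3\lambda)$ for some $z^\star\in \R_{x_1}$. Lemma~\ref{l.OnRline} gives, for any $\beta < \bar\beta(\alpha_1,\gamma)$, that
\begin{equation*}
\mu(B_\s(z,\lambda)) \;\leq\; \mu(B_\s(z^\star,3\lambda)) \;\leq\; C(\omega)\,\lambda^{\beta}\,.
\end{equation*}

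\textbf{Case 2: $\delta \geq 2\lambda$.} Here $B_\s(z,\lambda)$ lies at distance comparable to $\delta$ from $\R_{x_1}$, so Lemma~\ref{l.distR} applies and yields, for any $\tilde\beta < \tfrac{\gamma}{2\sqrt{2}}$ and any $\beta<\bar\beta(\alpha_1,\gamma)$,
\begin{equation*}
\mu(B_\s(z,\lambda)) \;\leq\; C(\omega)\,\delta^{\tilde\beta - \alpha_1\gamma}\,\lambda^{\beta}\,.
\end{equation*}
Since $\delta$ ranges over $[2\lambda,1]$, the exponent $\delta^{\tilde\beta-\alpha_1\gamma}$ is worst either at $\delta\sim\lambda$ (when $\tilde\beta-\alpha_1\gamma<0$) or at $\delta\sim 1$ (when $\tilde\beta-\alpha_1\gamma\geq 0$). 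In either case
\begin{equation*}
\mu(B_\s(z,\lambda)) \;\leq\; C(\omega)\,\lambda^{\,\beta + \bigl[(\tilde\beta-\alpha_1\gamma)\wedge 0\bigr]}\,,
\end{equation*}
which also matches (and is never worse than) the Case~1 bound.

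Combining both cases and optimising, $\beta\nearrow\bar\beta(\alpha_1,\gamma)=\g2-2\sqrt{2}\gamma+4$ and $\tilde\beta\nearrow \tfrac{\gamma}{2\sqrt{2}}$, we obtain
\begin{equation*}
\mu(B_\s(z,\lambda)) \;\leq\; C(\omega)\,\lambda^{\,4+\bigl[(\tfrac{\gamma}{2\sqrt{2}}-\alpha_1\gamma)\wedge 0\bigr] + \g2 - 2\sqrt{2}\gamma - \kappa}\,,
\end{equation*}
for arbitrarily small $\kappa>0$. Plugging this into the reduction above yields $\mu\in \calC_\s^{\alpha}$ for every $\alpha<\bigl[(\tfrac{\gamma}{2\sqrt{2}}-\alpha_1\gamma)\wedge 0\bigr] + \g2 - 2\sqrt{2}\gamma$, as claimed. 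The main delicate point is the matching of scales at $\delta\sim \lambda$ (which is precisely the scale at which the off-line bound from Lemma~\ref{l.distR} meets the on-line bound from Lemma~\ref{l.OnRline}); this is what determines how the logarithmic singularity at $x_1$ depresses the ambient regularity $\g2-2\sqrt{2}\gamma$ by the correction $(\tfrac{\gamma}{2\sqrt{2}}-\alpha_1\gamma)\wedge 0$ when $\alpha_1$ is large.
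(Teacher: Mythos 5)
Your proof is correct and is essentially the paper's own argument made explicit: the paper simply combines Lemma \ref{l.OnRline} (balls meeting the time-line) with Lemma \ref{l.distR} (balls at distance $\delta\geq r$), reduces to ball masses by positivity, and observes that the off-line regime is the binding one because $\bigl[(\frac{\gamma}{2\sqrt{2}}-\alpha_1\gamma)\wedge 0\bigr]+\g2-2\sqrt{2}\gamma \leq \g2-(\alpha_1+2)\gamma$, which is exactly your matching of the two cases at $\delta\sim\lambda$. Only note that your parenthetical ``never worse than the Case~1 bound'' is stated in the wrong direction: the Case~2 exponent is the smaller (hence worse) one, and the point is rather that the Case~1 threshold dominates it (the displayed inequality, which holds since $\tfrac{1}{2\sqrt{2}}\leq 2\sqrt{2}-2$), so the uniform bound, and hence the final regularity, is governed by Case~2.
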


\ni
\textit{Proof.} The proof is straightforward once one notices that the worse regularity is given by Lemma \ref{l.distR} as it is always the case that $ \bigl[(\frac \gamma {2\sqrt{2}} - \alpha_1 \gamma)\wedge 0 \bigr]  +\g2 - 2\sqrt{2} \gamma  \leq \g2  -(\alpha_1 + 2) \gamma$. \qed

Exactly as in Section \ref{s.TORUS}, such a regularity result easily  implies our Theorem \ref{th.mainP}. (The regularity thresholds in each case being $-1$ and $-2$). 
For the sake of simplicity, we will state a result below in the most important case of coupling constant $\alpha_1 := \gamma$, which corresponds to the conformal embedding of planar maps of $\gamma$-universality class. \margin{NOTE that if $\gamma\leq 1/(2\sqrt{2})=0.3535$ slightly below $\gamma_{dPD}\approx 0.38$, they have SAME regularity as $\wedge$ gives here 0!}

\begin{corollary}\label{C.CoupConst}
For any $\gamma<2$, fix the coupling constant $\alpha_1$ to be $\alpha_1(\gamma)= \gamma$. 
We have the following two regimes.
\bnum
\item If $\gamma< \gamma_1=-\frac 7 4 \sqrt{2} + \frac 1 4 \sqrt{130}\approx 0.376$ (thus slightly smaller than $\gamma_{dPD}\approx 0.378$ for the simplified SPDE), then  a local existence  as well as a convergence result (as $\eps\to 0$) holds for the  Liouville SPDE~\eqref{e.DLT}
\item If $\gamma_1 \leq \gamma <\frac 1 2 \sqrt{2} \approx 0.707$, only a strong solution to the SPDE~\eqref{e.DLT} holds similarly as in Theorem \ref{th.main2}. 
\enum
\end{corollary}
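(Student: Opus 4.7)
The plan is to derive the corollary as a direct specialization of Theorem \ref{th.mainP}, applied with $\hat\alpha = \alpha_1 = \gamma$. The whole statement reduces to solving the two polynomial inequalities $R(\gamma,\gamma) > -1$ and $R(\gamma,\gamma) > -2$, where
\[
R(\hat\alpha,\gamma) = \bigl[(\tfrac{\gamma}{2\sqrt{2}} - \hat\alpha\gamma)\wedge 0\bigr] + \tfrac{\gamma^2}{2} - 2\sqrt{2}\gamma.
\]
First I would identify the breakpoint of the $\wedge$: when $\hat\alpha = \gamma$, one has $\tfrac{\gamma}{2\sqrt{2}} - \gamma^2 \leq 0$ if and only if $\gamma \geq \tfrac{1}{2\sqrt{2}}$. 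Since both candidate thresholds $\gamma_1 \approx 0.376$ and $\tfrac{\sqrt{2}}{2}\approx 0.707$ are larger than $\tfrac{1}{2\sqrt{2}} \approx 0.354$, the relevant regime is always the one in which the minimum in $R$ is attained by $\tfrac{\gamma}{2\sqrt{2}} - \gamma^2$.

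Next I would compute the resulting expression explicitly:
\[
R(\gamma,\gamma) = \tfrac{\gamma}{2\sqrt{2}} - \gamma^2 + \tfrac{\gamma^2}{2} - 2\sqrt{2}\gamma = -\tfrac{\gamma^2}{2} - \tfrac{7\gamma}{2\sqrt{2}}.
\]
The condition $R(\gamma,\gamma) > -1$ then becomes $\gamma^2 + \tfrac{7\sqrt{2}}{2}\gamma - 2 < 0$, whose positive root is $\gamma_1 = \tfrac{1}{4}(\sqrt{130} - 7\sqrt{2})$, exactly the value appearing in the statement. Similarly, $R(\gamma,\gamma) > -2$ reduces to $\gamma^2 + \tfrac{7\sqrt{2}}{2}\gamma - 4 < 0$, whose positive root equals $\tfrac{1}{4}(\sqrt{162} - 7\sqrt{2}) = \tfrac{\sqrt{2}}{2}$, using $\sqrt{162} = 9\sqrt{2}$.

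Before invoking item~(1) of Theorem \ref{th.mainP}, I would check the running hypothesis $\hat\alpha < Q$ of the Seiberg bound, namely $\gamma < Q_\gamma = \tfrac{2}{\gamma} + \tfrac{\gamma}{2}$; this reduces to $\gamma^2 < 4$ and is automatic in the stated range. Moreover, the implicit positivity assumption $\bar\beta(\alpha_1,\gamma) > 0$ from Proposition \ref{pr.RegLine} is satisfied throughout, since $\bar\beta(\gamma,\gamma) = \tfrac{\gamma^2}{2} - 3\gamma + 4 > 0$ for all $\gamma \in [0,2)$. With these verifications in place, part~(1) of Theorem \ref{th.mainP} yields the local existence and $\varepsilon \to 0$ convergence for $\gamma < \gamma_1$, while part~(2) gives the strong solution via the positivity argument of Section~\ref{s.POS} in the extended range $\gamma_1 \leq \gamma < \tfrac{\sqrt{2}}{2}$.

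There is essentially no obstacle here beyond a short calculation: all the analytic work is already packaged in Theorem \ref{th.mainP}, and the novelty of the corollary is only the explicit form of the two thresholds after substituting $\alpha_1 = \gamma$. If anything is worth highlighting, it is simply the observation that taking $\alpha_1 = \gamma$ only mildly shrinks the Da Prato--Debussche window (from $\gamma_{dPD} \approx 0.378$ down to $\gamma_1 \approx 0.376$), while the positivity-enhanced window shrinks more substantially (from $\gamma_{\mathrm{pos}} \approx 0.83$ down to $\tfrac{\sqrt{2}}{2} \approx 0.71$), reflecting the singular contribution of the puncture at $x_1$ to the Besov regularity of the weighted chaos measure.
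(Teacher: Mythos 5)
Your proposal is correct and is exactly the paper's (implicit) argument: Corollary \ref{C.CoupConst} is Theorem \ref{th.mainP} specialized to $\hat \alpha=\alpha_1=\gamma$, and your reduction to the two quadratics $\gamma^2+\frac{7\sqrt{2}}{2}\gamma-2<0$ and $\gamma^2+\frac{7\sqrt{2}}{2}\gamma-4<0$, with positive roots $\gamma_1=\frac14(\sqrt{130}-7\sqrt{2})$ and $\frac{\sqrt{2}}{2}$, is the intended computation (the breakpoint discussion at $\gamma=\frac{1}{2\sqrt{2}}$ is also handled correctly, since below it $R$ coincides with the simplified-SPDE exponent and the condition holds a fortiori). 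One harmless slip: from $\bar\beta(\alpha_1,\gamma)=\frac{\gamma^2}{2}-(\alpha_1+2)\gamma+4$ one gets $\bar\beta(\gamma,\gamma)=-\frac{\gamma^2}{2}-2\gamma+4$, positive only for $\gamma<2\sqrt{3}-2\approx 1.46$, not $\frac{\gamma^2}{2}-3\gamma+4$; this does not affect anything since positivity still holds on the whole range $\gamma<\frac{\sqrt{2}}{2}$, and is in any case forced by $R(\hat\alpha,\gamma)>-2$ because $R\leq \bar\beta-4$.
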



\margin{
Let us try a naive computation. If there is no push around $x_1$. Then, what is the local regularity ? Well, Clearly what is there is a \textbf{Brownian motion} and it is not going to have a biais typically! .... Good to explain in talks say ....}

%
%
%
%

\section{Dynamical liouville on the sphere $\S^2$}\label{s.sphere}

In this section we explain how to adapt the proof of Theorem \ref{th.mainT} to the case of the sphere (i.e. Theorem \ref{th.mainS}). We shall only focus on the simplified SPDEs on the sphere: 
\begin{align}\label{e.DLSsS}
\begin{cases}
\p_t X &  = \frac 1 {4\pi} \Delta X - e^{\gamma X} + \xi  \\
\text{ or } &  \\
\p_t X  & = \frac 1 {4\pi} \Delta X - \sinh(\gamma X) + \xi\,,
\end{cases}
\end{align}
where  $\Delta$ denotes the Laplace-Beltrami Laplacian on $\S^2$. Indeed, the extension to the actual Liouville SPDE~\eqref{e.DLS} can be done exactly as for the torus in Section \ref{s.Ext} (the fact there are 3 punctures ore more does not add any difficulty, only the largest coupling constant $\alpha_i$ matters in the estimates). As in the flat case, a key ingredient of the analysis will be a careful study of the linear heat equation on the sphere, 
\begin{align*}\label{}
\p_t \Phi = \frac 1 2 \Delta_{(\S^2)} \Phi  + \sqrt{2\pi} \, \xi\,.
\end{align*}
We will face the following three main technical difficulties due to the fact we are on the curved space $\S^2$ instead of the flat Euclidean setting $\R^2$:
\bnum
\item A first (small) issue is that we need to precise what we mean by the convolution $K*\xi$ (if $K$ is the Heat kernel operator on the sphere) as we are not on the Euclidean space. Similarly, we also need to be careful with associativity rules such as $K*(\rho_\eps * \xi)= K* \xi_\eps = K_\eps * \xi$ that are used all over the place in the study of singular SPDEs driven by space-time white noise. We will introduce precisely what will be our setup in subsection \ref{ss.setupS}

\item The main issue will be that there is no explicit expression known for the heat kernel $p_t^{\S^2}(x,y)$ on the two-sphere. We will therefore need some precise estimates to compare $p_t^{\S^2}(x,y)$ with $p_t^{\R^2}(x,y)$ in order to extract the needed sharp estimates on the correlation structure of the solution $\Phi$ to the linear heat-equation $\p_t \Phi = \frac 1 2 \Delta_{\S^2} \Phi  + \xi$.  See subsection \ref{ss.HKS}. 

\item Finally, we will explain briefly in Subsection \ref{ss.SpecS} how to adapt the analysis of the $q^{th}$ moments of $\Wick{e^{\gamma \Phi}}$ to the case of the sphere. 
\enum


\subsection{Setup/notations on the sphere $\S^2$}\label{ss.setupS}



$ $
First, the notations used in Subsection \ref{ss.Besov} to  introduce Besov spaces in the flat case have an obvious counter-part on the curved space/time $\R\times \S^2$. 
Let us just precise what shall be our parabolic metric (we won't have a norm anymore). For any $z=(t,x)\in \R\times \S^2$, let 
\[
\|z\|_\s = \|(t,x)\|_\s := |t|^{1/2} + d_{\S^2}(0,x)
\]
and 
\[
d_\s(z,z') = \|z-z'\|_\s := |t-t'|^{1/2} + d_{\S^2}(x,x')
\]
There is a slight abuse of notation here as $z-z'$ is no longer in $\R\times\S^2$.


\medskip

We now define what will be our kernel $K=K_{\S^2}$. As in Subsection \ref{ss.dPD} (and following \cite{sine}), it is more convenient here to consider a compactly supported (in space-time) kernel which coincides with the heat-kernel $p_t^{\S^2}(x,y)$ in the neighbourhood of $t=0$. As $\S^2$ is already compact, at first sight it seems one should not further restrict the support of $K$ in space. Yet,  because of the singular behaviour of the heat-kernel at the {\em cut-locus} of the sphere, it will in fact be convenient from now on to restrict our compact support to the north hemisphere. Here is a precise definition of our kernel $K$.

\begin{definition}\label{}
Let $K: (0,\infty) \times \S^2 \times \S^2 \to \R_+$ be a smooth kernel with the following properties:
\bi
\item $K_t(x,y)$ coincides with the heat-kernel $p^{\S^2}_t(x,y)$ when $ t \leq 1$ and $d_{\S^2}(x,y) \leq \frac \pi 3$. 
\item The support of $K$ is included in $\{t <2 \} \cap \{ d_{\S^2}(x,y) < \frac \pi 2 \}$. 
\ei
We also extend $K$ to negative times by defining it to be 0 on $((-\infty, 0] \times \S^2\times \S^2)\setminus( \{0\} \times \Delta)$, where $\Delta$ is the diagonal of $\S^2\times \S^2$. We then view $K$ as smooth kernel on $\R \times \S^2 \times \S^2 \setminus (\{0\} \times \Delta)$.

$K$ induces the following convolution type of operator. For any smooth function $f: \R \times \S^2 \to \R$, consider 
\begin{align}\label{e.K*}
K f(t,x) & := \int_0^\infty \int_{\S^2} K_{s}(x,y) f(t-s,y) ds dy \\
& = \int_{\R} \int_{\S^2} K_{s}(x,y) f(t-s,y) ds dy 
\end{align}
where $dy$ is the (non-normalized) Lebesgue measure on $\S^2$. 
\end{definition}


We also introduce the following $\eps$-smoothing of functions defined on the sphere. 

\begin{definition}[$\eps$-regularization on $\S^2$]\label{}
Fix some smooth and compactly supported function $\varrho: \R\times \R^2 \to \R_+$ which integrates to 1 and which is radially symmetric in space. (With a slight abuse of notation, we will write $\varrho(t,r e^{i \theta}) = \varrho(t,r)$). 
For any space-time distribution $Z$ on $\R \times \S^2$, define for any $\eps>0$, 
\begin{align*}\label{}
Z_\eps(x) = \varrho_\eps* Z(t,x) :=  \int_{\R \times \S^2} Z(s,y) \frac 1 {V_\varrho(\eps)}  \varrho(\frac {t-s} {\eps^2}, \frac {\dist_{\S^2}(x,y)} \eps ) ds dy\,,
\end{align*}
where $V_\varrho(\eps):= \int_{\R \times \S^2}  \varrho(\frac {s} {\eps^2}, \frac {\dist_{\S^2}(1,y)} \eps ) ds dy$. Note that  $V_\varrho(\eps)\sim \eps^4$ as $\eps\to 0$. See \cite{sphere, LecturesLQG} for a slightly different smoothing.  
\end{definition}

We may now define our (approximate) solution to the linear heat equation driven by $\xi_\eps:= \varrho_\eps * \xi$. We define $\Phi_\eps$ to be the Gaussian Field
\begin{align*}
\Phi_\eps: = K * \xi_\eps 
\end{align*}
As in \cite{sine}, it can be seen that the correction term to the heat equation $R_\eps := \p_t \Phi_\eps - \frac 1 2 \Delta \Phi_\eps - \xi_\eps$ is a smooth function which converges as $\eps\to 0$ to a smooth limiting  function $R$.  


We will use the following useful and straightforward property of the above $\eps$-smoothing (on $\R\times \R^2$, this is just the associativity of convolution).  \margin{Check it also on $K_\eps * R K_\eps$ }
\begin{lemma}\label{}
For any smooth function with compact support $f: \R \times \S^2 \to \R$,
\begin{align}\label{e.assoc}
K*f_\eps = K*(\varrho_\eps * f) & = K_\eps * f
\end{align}
where 
\[
K_\eps(t,0,x) =  (\varrho_\eps * K)(t,0,x) = \int_{\R \times \S^2} K(s,0,y) \frac 1 {V_\varrho(\eps)}  \varrho(\frac {t-s} {\eps^2}, \frac {\dist_{\S^2}(x,y)} \eps ) ds dy
\]
\end{lemma}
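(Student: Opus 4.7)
The plan is to verify this on-the-sphere analogue of convolution-associativity by unfolding both sides of \eqref{e.assoc} as iterated integrals against $f$ and then collapsing the two innermost integrations using the rotational symmetry of the kernels involved. Two structural facts will be used throughout: the heat kernel $p^{\S^2}_s$ is zonal, i.e.\ $p^{\S^2}_s(x,y)=p^{\S^2}_s(gx,gy)$ for every $g\in\mathrm{SO}(3)$, so that $K_s(x,y)$ depends on $(x,y)$ only through $\dist_{\S^2}(x,y)$; and $\varrho$ is radially symmetric in space by assumption, so the mollifier $\varrho_\eps$ is zonal as well. Because $f$ is smooth with compact support, all of the integrals that appear below are absolutely convergent and Fubini's theorem can be invoked without further comment.

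The first equality $K*f_\eps=K*(\varrho_\eps*f)$ is nothing but the definition $f_\eps:=\varrho_\eps*f$. For the second equality I would start from
\[
(K*f_\eps)(t,x)=\int\int K_s(x,y)\Bigl[\int\int f(u,w)\,\frac{1}{V_\varrho(\eps)}\varrho\Bigl(\frac{t-s-u}{\eps^2},\frac{\dist_{\S^2}(y,w)}{\eps}\Bigr)du\,dw\Bigr]ds\,dy,
\]
reorder the integrations so that $(s,y)$ become innermost, and perform the change of variable $\sigma=t-u-s$ to identify the inner double integral as a function $H(t-u,x,w)$ of $(t-u,x,w)$ alone. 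At the base point $x=0$, a further change of variable $s'=(t-u)-\sigma$ inside $K$ shows that $H(\tau,0,w)$ coincides with $K_\eps(\tau,0,w)$ as defined in the statement. Reassembling then gives $(K*f_\eps)(t,x)=\int\int H(t-u,x,w)f(u,w)\,du\,dw=(K_\eps*f)(t,x)$ in the sense of \eqref{e.K*}.

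The one non-routine step is extending $K_\eps(\tau,x,w)$ from the base point $x=0$ used in the statement to arbitrary $x\in\S^2$. For this I would argue: given $x$, pick any $g\in\mathrm{SO}(3)$ with $g\cdot 0=x$; the change of variable $y'=g^{-1}y$, combined with the zonality of $K_s$ and the isometry identity $\dist_{\S^2}(gy',w)=\dist_{\S^2}(y',g^{-1}w)$, shows that $H(\tau,x,w)=K_\eps(\tau,0,g^{-1}w)$. This is independent of the choice of $g$ and depends on $(x,w)$ only through $\dist_{\S^2}(x,w)$, which is the natural zonal extension of $K_\eps$ and is what is meant by $K_\eps*f$ in \eqref{e.K*}. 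The main (rather mild) obstacle is thus only the absence of a literal convolution structure on $\S^2$: on $\R\times\R^2$ both $K$ and $\varrho_\eps$ depend only on differences and associativity is automatic, whereas on $\R\times\S^2$ one has to replace translation invariance by isotropy of the two kernels and perform the identifications through rotations. Once zonality of $K$ and of $\varrho_\eps$ is in place, \eqref{e.assoc} is really just Fubini.
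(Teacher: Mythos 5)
Your argument is correct and is exactly the computation the paper has in mind when it declares the proof ``immediate'': unfold both convolutions, apply Fubini (licit since $f$ is smooth and compactly supported and the kernels are bounded with compact support), and use rotation invariance of the surface measure together with the zonality of $K_s$ and of the mollifier to identify the inner integral with the zonal extension of $K_\eps(\cdot,0,\cdot)$. The only point worth making explicit is that the zonality of $K$ is part of its construction (it is $\chi(t,d_{\S^2}(x,y))\,p_t^{\S^2}(x,y)$ with a rotationally symmetric cutoff, as used later in the proof of Proposition \ref{l.keyS}), which is precisely what makes the statement, and your rotation argument, well posed.
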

The proof is immediate. 

Let us now introduce the covariance kernel of the Gaussian field $\{ \Phi_\eps \}$ on which we will need to have precise asymptotics. 
\begin{definition}\label{}
Let $\calQ_\eps$ be the covariance kernel 
\begin{align*}\label{}
\calQ_\eps(t,x,y) := \Eb{\Phi_\eps(0,x) \Phi_\eps(t,y)}
\end{align*}
It is straightforward to check that it is given by the following  formula
\begin{align*}\label{}
\calQ_\eps(t,x,y) 
& = \int_\R \int_{\S^2} K_\eps(u,x,w) K_\eps(u-t,y,w)dudw\,.
\end{align*}
If $z=(t,x)$ is a space-time point in $\R \times \S^2$, we will call $\calQ_\eps(z):= \calQ_\eps(t,0,r e^{i \theta})$ if $r,\theta$ are the polar coordinates of $x\in \S^2$ viewed from the North-Pole. 
$\calQ_\eps$ can be written,  
\begin{align*}\label{}
\calQ_\eps
& = K_\eps * \calT(K_\eps)\\
& = \calQ * (\varrho_\eps * \calT \varrho_\eps)\, \;\;\;\;\;\;\; \calQ= K * \calT K\,,
\end{align*}
where $\calT f(t,x):=f(-t,-x)$ \margin{slight abuse of notations}
 and where the convolutions used here have the obvious meaning from~\eqref{e.K*} on the sphere. 
\end{definition}


\subsection{Estimates on the log-correlated field induced by the heat equation on the sphere}\label{ss.HKS}
$ $


As in Section \ref{s.RE}, we need to have a very precise control on the covariance kernel $\calQ_\eps$ of the approximate solution $\Phi_\eps$ to the heat-equation on $\S^2$. We will prove the following estimates which are an exact analog of the above Proposition \ref{l.key} in the case of the sphere.

\begin{proposition}\label{l.keyS}
The covariance kernels $\calQ$ and $\calQ_\eps$ satisfy the following properties on $\S^2$. 
\bnum

\item 
There exists a constant $\hat C_\varrho$ which only depends on the smoothing function $\varrho$ such that 
\begin{align}\label{e.Ess0S}
\calQ_\eps(0) (=\calQ_\eps(0,0,0)) = \log \frac 1 \eps +  \hat C_\varrho + O(\eps^2 \log \frac 1 \eps) 
\end{align}
N.B. We obtain an error term $O(\eps^2 \log \frac 1 \eps)$ on $\S^2$ instead of $O(\eps^2)$ on $\T^2$. 
\margin{!! J'ai rajoute une terme $\log \frac 1 \eps$ qui n'est pas dans Sine Gordon! A cause des estimées plus bas.. CHECK THIS IS FINE at least morally}


\item There exists $c,C$ which are $\varrho$-dependent s.t. for any $\eps\in (0,1]$ and any $z,\, \|z\|_\s \leq 1$, 
\begin{align}\label{e.Ess1epsS}
- \log(\| z\|_\s + \eps) + c \leq  \calQ_\eps(z) \leq - \log(\| z\|_\s + \eps) + C
\end{align}

\item  
\begin{align}\label{e.Ess1S}
\calQ(z) \sim - \log(\| z\|_\s)
\end{align}
\item 
\begin{align}\label{e.Ess2S}
|\calQ_\eps(z) - \calQ(z)| \leq C  \left( \frac \eps {\|z\|_\s} \wedge (1+ \log \frac \eps {\| z\|_\s} )\right)
\end{align}
\enum
%

\end{proposition}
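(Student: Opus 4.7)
The plan is to reduce each estimate to its Euclidean counterpart in Proposition \ref{l.key} via a short-time comparison between $p_t^{\S^2}$ and $p_t^{\R^2}$, exploiting the fact that our kernel $K$ is supported away from the cut locus (inside $\{d_{\S^2}(x,y)<\pi/2,\ t<2\}$) and equals the sphere heat kernel on $\{d_{\S^2}(x,y)\leq\pi/3,\ t\leq1\}$. The classical Minakshisundaram--Pleijel / parametrix expansion gives, uniformly for $t\in(0,1]$ and $d_{\S^2}(x,y)\leq\pi/2$,
\[
p_t^{\S^2}(x,y) = \frac{1}{2\pi t}\, e^{-d_{\S^2}(x,y)^2/(2t)}\bigl(H_0(x,y)+t\,H_1(x,y)+t^2 r_t(x,y)\bigr),
\]
with $H_0,H_1$ smooth, $H_0(x,x)=1$, and $r_t$ uniformly bounded. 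For what is outside this regime (large times, or the smooth part of $K$ that no longer matches $p^{\S^2}$), the contribution is a bounded smooth function absorbed into the $O(1)$ constants.

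First I would compute $\calQ$. By definition $\calQ(t,x,y)=\iint K_s(x,w)K_{s-t}(y,w)\,ds\,dw$. On the support where $K$ agrees with the heat kernel, the semigroup property gives $\int K_s(x,w)K_{s-t}(y,w)\,dw = p_{2s-t}^{\S^2}(x,y)$ up to boundary contributions coming from $w$'s near the boundary of the support of $K_s(x,\cdot)$ or $K_{s-t}(y,\cdot)$; these boundary contributions are smooth and uniformly bounded in $(t,x,y)$ with $\|(t,x-y)\|_\s\leq 1$. One is thus reduced to estimating $\int_0^c p_s^{\S^2}(x,y)\,ds$. Using normal coordinates at $x$, $d_{\S^2}(x,y)$ and the Euclidean distance $|v|$ (with $y=\exp_x(v)$) agree up to $O(|v|^3)$, and the volume form is Euclidean up to $1+O(|v|^2)$. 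Plugging into the above expansion and integrating $\int_0^c\frac{1}{2\pi s}e^{-d^2/(2s)}\,ds=-\log d+O(1)$ yields \eqref{e.Ess1S}. The $\eps$-regularized versions \eqref{e.Ess1epsS} and \eqref{e.Ess2S} follow from \eqref{e.assoc}, since convolution with $\varrho_\eps\ast\calT\varrho_\eps$ (a function of space-time norm essentially of scale $\eps$) turns $-\log\|z\|_\s$ into $-\log(\|z\|_\s+\eps)$ with the standard errors, exactly as on the torus in \cite{sine}.

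For the value at the origin \eqref{e.Ess0S}, one integrates the above heat kernel expansion against $\varrho_\eps\ast\calT\varrho_\eps$ at $z=0$. The leading $H_0/(2\pi s)$ term reproduces, after change of variables $s=\eps^2 u$, the flat asymptotic $\log(1/\eps)+\hat C_\varrho+O(\eps^2)$ of Hairer--Shen. The curvature correction $tH_1(x,y)/(2\pi)$ contributes, after integration, an extra $O(\eps^2\log(1/\eps))$ term (the $\log$ coming from the logarithmic divergence of $\int_{\eps^2}^1 H_1(0,0)\,ds$), which accounts for the only discrepancy with the torus statement. The remainder $t^2 r_t$ integrates to $O(\eps^2)$, and the off-support contributions are smooth and already absorbed into $\hat C_\varrho$.

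The main obstacle will be the bookkeeping in the second paragraph: one must show that the various boundary and smooth error terms (from the truncation of $K$, from the curvature corrections in $p_t^{\S^2}$, and from the $O(|v|^3)$ discrepancy between intrinsic and Euclidean distances in normal coordinates) only contribute either uniformly bounded quantities or terms that decay at least as $O(\eps/\|z\|_\s \wedge 1)$ in \eqref{e.Ess2S}. All of these checks are parallel to the corresponding steps of \cite[Sec.~3]{sine}, the new ingredient being only the control of the geometric corrections, which are harmless thanks to the smoothness of $H_0,H_1$ and the compactness of the support of $K$.
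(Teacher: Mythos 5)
Your route is genuinely different from the paper's: you reduce $\calQ$ via Chapman--Kolmogorov to a time integral of $p^{\S^2}$ and then invoke the Minakshisundaram--Pleijel parametrix, whereas the paper uses $\p_t K-\frac12\Delta K=\delta+R$ to write $\calQ=\hat K\ast G_{\S^2}+R$ with the \emph{explicit} spherical Green function $G_{\S^2}(x,y)=\log\bigl[\tfrac1{2\sin(d/2)}\bigr]$, and then compares with the flat kernel of \cite{sine} through Nagase's pointwise estimate $p_t^{\S^2}(r)=p_t^{\R^2}(r)\bigl(\sqrt{r/\sin r}+O(t)\bigr)+O(e^{-1/t})$ and a distance-distortion lemma. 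Your treatment of the truncation of $K$ is fine (where the cutoff acts, either $t$ is bounded below or the Gaussian factor is exponentially small while the other factor has unit mass), so the reduction to $\int_0^c p_s^{\S^2}(x,y)\,ds$ is legitimate.

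Two steps, however, are genuine gaps as written. First, items (2) and (4) do \emph{not} follow from the two-sided bound (3) ``exactly as on the torus'': the estimate \eqref{e.Ess2S} (and the transfer of the additive constant from $\calQ$ to $\calQ_\eps$ in \eqref{e.Ess0S}) requires derivative bounds of the form $|\p_r\calQ(z)|\le C/\|z\|_\s$ and $|\p_t\calQ(z)|\le C/\|z\|_\s^2$; this is precisely why the paper proves a separate lemma, getting the space derivative by differentiating under the integral against the explicit $G_{\S^2}$ and the time derivative via $\p_t\calQ=\frac12\Delta\calQ+\tilde R$ rather than differentiating $p_t^{\S^2}$ in $t$. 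Your parametrix representation could in principle supply these bounds (differentiate the Gaussian leading term and the smooth $H_0,H_1$, then integrate in time), but that step is absent, and without it \eqref{e.Ess2S} is unproved. Second, your explanation of the error in \eqref{e.Ess0S} is incorrect: $\int_{\eps^2}^1 H_1(0,0)\,ds$ is not logarithmically divergent; the $tH_1$ term contributes an $\eps$-independent constant (absorbed into $\hat C_\varrho$) plus $O(\eps^2)$. The true source of the extra $\log\frac1\eps$ is the cubic distortion between $d_{\S^2}$ and the Euclidean distance in normal coordinates (equivalently the metric/curvature corrections), which produces an error of order $\|z\|_\s^2\log\frac1{\|z\|_\s}$ in $\calQ$ near the origin and hence $O(\eps^2\log\frac1\eps)$ after mollification, exactly as in the paper's estimate \eqref{e.rlogr}. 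Finally, watch the constants: as literally written, $\int_0^c\frac1{2\pi s}e^{-d^2/(2s)}\,ds=\frac1\pi\log\frac1d+O(1)$, and the prefactor $1$ in \eqref{e.Ess1S} only emerges after including the $\sqrt{2\pi}$ normalization of $\Phi$ and the Jacobian of the substitution $u=2s-t$; since the proposition fixes the constant in front of the logarithm, this bookkeeping cannot be waved away.
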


\ni
{\em Proof.} 
These properties are already proved for the heat-equation on the Torus $\T^2$ in \cite{sine}. (See Proposition \ref{l.key} in section \ref{s.RE}).  We shall explain here how to adapt the proof to the case of the heat-equation on the sphere $\S^2$. The main technical difficulty to handle is the absence of explicit heat kernel $p_t^{\S^2}$ as well as the lack of exact scaling arguments.  We refer the reader to the Section 3 in \cite{sine} for the details of the proofs on the flat torus and will follow the same strategy/notations for the sphere $\S^2$ below. In the setting of the sphere, we still have the equality (in distributional sense) 
\[
\p_t K -\frac 1 2 \Delta K = \delta + R\,,
\]
where $\Delta$ is the Laplace-Beltrami Laplacian on $\S^2$ and $R$ is a smooth compactly supported function whose definition may change from line to line (as in \cite{sine}). We thus obtain in the same fashion as in \cite{sine} (noticing that $\calQ(z)=\calQ(-z)$ for space-time points $z$) that 
\[
\Delta \calQ(z) = K(z) + K(-z) + R\,,
\]
where in this setting if $z=(t, r e^{i \theta})$ (in polar coordinates from North pole, say), then $K(-z):=K(-t, re^{i \theta + i \pi})= K(-t, r e^{i \theta})$. 

As in Lemma 3.8. in \cite{sine}, we thus end up with the following useful expression of $\calQ$:
\begin{align}\label{e.Q0}
\calQ(t,x) & = (\hat K(t,\cdot) * G_{\S^2})(x) + R \nn \\
& := \hat\calQ(t,x) + R(t,x)  
\end{align}
where $\hat K(z):= K(z) + K(-z)$ and $G_{\S^2}(x)$ is the \textbf{Green function on the sphere}, i.e. the function $G_{\S^2}$ on $\S^2 \times \S^2$ defined (modulo an additive constant) for all $x\in \S^2$ by 
\[
- \Delta_{\S^2} G_{\S^2}(x,\cdot)  = 2\pi(\delta_x - \frac 1 {4\pi})\,.
\]
 
On the sphere, $G_{\S^2}$ takes the following simple explicit form 
\begin{align}\label{e.GreenSphere}
G_{\S^2}(x,y) = \log [ \frac 1 {2\sin(\frac r 2)}]\,,
\end{align}
where $r=d_{\S^2}(x,y)$. This can be easily checked using the following expression of the Laplace-Beltrami operator on $\S^2$ in the normal coordinate system from the north pole (see~\eqref{e.NC}): 
\begin{align}\label{e.DLB}
\Delta_{\S^2} f(r,\theta) = \frac 1 {\sin(r)} \frac {\p} {\p r}(\sin r \frac {\p f} {\p r}) + 
 \frac 1 {\sin(r)^2} \frac {\p^2 f} {\p \theta^2} 
\end{align}


We need to analyze 
\begin{align*}\label{}
(p^{\S^2}(|t|, \cdot) * G_{\S^2})(x)  & = \int_{\S^2} p^{\S^2}_{|t|}(0,y) G_{\S^2}(x,y) dy  (= \int_{\S^2} p^{\S^2}_{|t|}(x,y) G_{\S^2}(0,y) dy ) 
\end{align*}
As the Green function is explicit but the heat kernel is not, we will prefer the first expression.
\begin{align*}\label{}
(p^{\S^2}(|t|, \cdot) * G_{\S^2})(x)  & = \int_{\S^2} p^{\S^2}_{|t|}(0,y) \log [ \frac 1 2 \sin^{-1}(\frac {d(x,y)} 2)] dy 
\end{align*}
This expression is very convenient as it makes its $\p_x$ analysis much more amenable than if we had to analyze $\p_x p^{\S^2}_t(x,y)$ without having explicit expressions on the later one.

Let us first prove items (1) and the same estimate~\eqref{e.Ess1} as on the torus.
 Our strategy will be to compare our covariance kernel $\calQ$ with the corresponding kernel on the plane/torus. Indeed it is shown in \cite{sine}, using scaling arguments (that are not allowed in our present curved setting) that the latter kernel is very regular besides a $\log \frac 1 {\|z\|_\s}$ singularity (see Lemma 3.9 in \cite{sine} for a precise statement).
 
We shall use the following parametrization of the unit sphere $\S^2$ (the so called {\em normal coordinate system}):
\begin{align}\label{e.NC}
\begin{cases}
z&=\cos(r) \\
x&=\sin(r)\cos(\theta) \\
y&=\sin(r) \sin(\theta) \\
\end{cases}
\end{align}
and will denote by $(r, \theta)$ or $r e^{i \theta}$ the point at distance $r$ from the North pole with angle $\theta$.  Depending on the context, $r e^{i \theta}$ will either denote a point in $\S^2$ with the above parametrisation or the usual point in $\R^2$ in polar coordinates. We will need the following ``distortion'' Lemma between the polar coordinates on the sphere and the polar coordinates on the plane.

\begin{lemma}\label{l.distort}
There exists a constant $C>0$ s.t.
for any $x=r e^{i \theta},y=r' e^{i \theta'} \in \S^2$, 
\begin{align*}\label{}
 (1- C r^2) (1- C r'^2) \| r e^{i \theta} - r' e^{i \theta'}\|_{\R^2} 
\leq d_{\S^2}(x,y) \leq 
\| r e^{i \theta} - r' e^{i \theta'}\|_{\R^2}\,, 
\end{align*}
where on the L.h.s and R.h.s, $\| \cdot\|_{\R^2}$ denotes the Euclidean metric on the plane. 
\end{lemma}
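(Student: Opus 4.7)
\textit{Proof plan.} The strategy is to work entirely in the normal coordinate chart $(r,\theta)$ from the north pole introduced in~\eqref{e.NC}, and to compare the round metric $g = dr^2 + \sin^2(r)\, d\theta^2$ with the flat polar metric $h = dr^2 + r^2\, d\theta^2$ \emph{pointwise}. The elementary Taylor bounds $r(1 - r^2/6) \leq \sin(r) \leq r$, both valid on $[0,\pi/2]$, yield, for every tangent vector $v = (v_r,v_\theta)$ at $(r,\theta)$, the pointwise comparison $(1 - r^2/6)\,|v|_h \leq |v|_g \leq |v|_h$. The upper bound $d_{\S^2}(x,y) \leq D := \|r e^{i\theta} - r' e^{i\theta'}\|_{\R^2}$ is then immediate: take the straight Euclidean segment in polar coordinates joining the two polar-coordinate representatives of $x$ and $y$, interpret it as a curve on $\S^2$, and integrate the pointwise inequality.

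For the lower bound, I would take a length-minimizing geodesic $\gamma$ on $\S^2$ joining $x$ and $y$, read it in normal coordinates as $(r(t),\theta(t))$, and estimate
$d_{\S^2}(x,y) \;=\; L_g(\gamma) \;\geq\; \bigl(1 - \tfrac{1}{6}\max_t r(t)^2\bigr)\, L_h(\gamma) \;\geq\; \bigl(1 - \tfrac{1}{6}\max_t r(t)^2\bigr)\, D,$
using that the Euclidean length $L_h(\gamma)$ of $\gamma$ (viewed as a curve in the polar chart) is at least the Euclidean distance $D$ between its endpoints. The remaining task is to control $\max_t r(t)$ in terms of the endpoint radii $r$ and $r'$.

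Here I would invoke the standard fact that every open geodesic ball $B_{\S^2}(N,R)$ centered at the north pole is strictly geodesically convex for $R < \pi/2$ (the convexity radius of the unit sphere being $\pi/2$). Assuming $r, r' < \pi/2$, the minimizing geodesic then stays inside $B_{\S^2}(N, \max(r,r'))$, yielding $\max_t r(t) \leq \max(r,r')$ and, via $\max(r,r')^2 \leq r^2 + r'^2$, the bound $d_{\S^2}(x,y) \geq (1 - (r^2 + r'^2)/6)\, D$. The final purely algebraic step is to check that one can choose $C$ (e.g.\ $C = 1/3$) so that $(1 - Cr^2)(1 - Cr'^2) \leq 1 - (r^2+r'^2)/6$ throughout the range $r,r' \leq \pi/2$; outside that range the left-hand factor can be forced nonpositive by taking $C$ large, making the lower bound trivial. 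The main (and essentially only) obstacle is this convexity step, which crucially requires staying strictly below the convexity radius $\pi/2$; fortunately this is compatible with the support restriction already imposed on $K$ in Subsection~\ref{ss.setupS}, so it does not affect the downstream use of the lemma.
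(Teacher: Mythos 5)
Your proof follows essentially the same route as the paper's: a pointwise comparison of the round and flat metrics in the normal/polar chart via $r(1-O(r^2))\le \sin r\le r$, the straight chart segment for the upper bound, and a minimizing spherical geodesic confined to the ball of radius $r\vee r'$ (geodesic convexity below the convexity radius) for the lower bound, followed by the same algebraic repackaging into the product $(1-Cr^2)(1-Cr'^2)$. The one caveat is your claim that the lower bound becomes trivial outside the range $r,r'<\pi/2$ by taking $C$ large: this works when only one radius is large, but when \emph{both} factors $(1-Cr^2)$ and $(1-Cr'^2)$ are nonpositive their product is nonnegative, so the bound is not trivial there (e.g.\ two points near the south pole with $\theta\neq\theta'$). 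This does not matter in practice, since the paper's own proof likewise only treats the regime where both radii are at most $\pi/3$ (note its ``$r\wedge r'$'' should read $r\vee r'$), which is the only regime in which the lemma is invoked because of the support restriction on $K$.
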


{\em Proof:}
For any smooth path $\sigma \subset \R^2$ joining $\sigma(0)=a$ to $\sigma(t=1)=b$, the Euclidean length is given in polar coordinates on $\R^2$ by 
\[
L_{\R^2}(a,b) = \int_0^1 \sqrt{r_t^2 + r^2 \theta_t^2} dt
\]
while the Spherical length is given by
\[
L_{\S^2}(a,b) = \int_0^1 \sqrt{r_t^2 + \sin(r)^2 \theta_t^2} dt
\]
For paths $\sigma$ that remain at distance $r_0$ from the North pole, this readily implies that
\[
(1- O(r_0^2)) L_{\R^2}(a,b) \leq L_{\S^2}(a,b) \leq L_{\R^2}(a,b)
\]
This implies if $r \wedge r' \leq \pi/3$ (so that geodesics will remain in the ball of radius $r \wedge r'$) that 
\[
(1- O((r \wedge r')^2)) d_{\R^2}(r e^{i \theta}, r' e^{i \theta'}) \leq d_{\S^2}(x,y) \leq 
d_{\R^2}(r e^{i \theta}, r' e^{i \theta'})
\]
We conclude the proof by using $1 - C (r\wedge r')^2 \geq (1- C r^2) (1- C r'^2)$.
\QED

Recall that the kernel $K(t,x)$ is defined as $\chi(t,x) p_t^{\S^2}(x)$ where $\chi(t,x)$ is a smooth compactly supported function (and whose support in space is included in $\{ r e^{i \theta} \in \S^2, r < \pi/3 \}$).  

As mentioned earlier, there does not exist nice explicit expressions of the heat kernel $p_t^{\S^2}$. Nevertheless, in order to prove Lemma \ref{l.keyS}, we will need some rather precise analytical estimates on this heat kernel. We will use the following result from \cite{Nagase} which builds on earlier works such as \cite{Elworthy, ElworthyTruman, Ndumu} 
\begin{proposition}[\cite{Nagase}]\label{pr.nag}
For any $r \leq \frac \pi 3$,
\begin{align*}\label{}
p_t^{\S^2}(r) & = p_t^{\R^2}(r) e^{t/8} \bigl(\sqrt{\frac r {\sin(r)}} + O_\infty(t)\bigr) + O_\infty(e^{-1/t}) \\
& = p_t^{\R^2}(r) \bigl(\sqrt{\frac r {\sin(r)}} + O_\infty(t)\bigr) + O_\infty(e^{-1/t})\,.
\end{align*}
where $O_\infty(t)$ is a function of $(t,r)$ which is $C^\infty$ in $r$ and whose $k^{th}$ derivatives in $r$ are $O(t)$, uniformly in $r\leq \frac \pi 3$ and where $O_\infty(e^{-1/t})$ is a function, whose $l$ times derivative is uniformly bounded (on $\{ r \leq \frac \pi 3 \}$) by some $O(e^{-\eps_l /t})$ function for some $\eps_l>0$.  
\end{proposition}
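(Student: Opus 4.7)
The plan is to establish this asymptotic expansion via the classical Minakshisundaram--Pleijel parametrix method, specialised to the rotationally symmetric setting of $\S^2$. First I would set up an ansatz of the form
\[
H_N(t,r) := p_t^{\R^2}(r)\, \Theta(r)^{-1/2}\, \bigl(1 + \sum_{k=1}^N a_k(r)\, t^k \bigr),
\]
where $\Theta(r) = \frac{\sin r}{r}$ is the Jacobian of the exponential map at the north pole, read off the volume form $\sin r\, dr\, d\theta$ in the normal coordinates~\eqref{e.NC}. The factor $\Theta(r)^{-1/2} = \sqrt{r/\sin r}$ is the $2$-dimensional Van Vleck--Morette determinant, and its presence is forced if one wants the leading-order transport equation for $H_N$ under $\p_t - \frac12 \Delta_{\S^2}$ to avoid a $t^{-1}$ obstruction.

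Plugging $H_N$ into the heat equation and using the polar expression~\eqref{e.DLB} of $\Delta_{\S^2}$, one derives recursive first-order linear ODEs in $r$ for the successive coefficients $a_k$. These admit unique smooth solutions on $r \in [0, \pi)$ (the only genuine singularity is at the cut-locus $r = \pi$), giving explicit smooth functions. Truncation leaves a residual
\[
E_N := \bigl(\p_t - \tfrac12 \Delta_{\S^2}\bigr) H_N = p_t^{\R^2}(r)\, \Theta(r)^{-1/2}\, t^N\, b_N(r) + \text{l.o.t.},
\]
with $b_N$ smooth on $r \leq \pi/3$. Duhamel's formula then yields
\[
p_t^{\S^2}(r) = H_N(t,r) + \int_0^t \bigl(p_s^{\S^2} \ast E_N(t-s, \cdot)\bigr)(r)\, ds,
\]
and a standard iteration bounds the correction uniformly by $t^{N+1}$ times the Euclidean Gaussian. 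For the stated form it is enough to take $N = 1$: the value of $a_1$ at $r = 0$ is determined by the scalar curvature of $\S^2$ (through the $r$-expansion of $\Theta^{-1/2}$ against $\Delta_{\S^2}$) and, once exponentiated, accounts for the factor $e^{t/8}$; the remaining terms collapse into $\sqrt{r/\sin r}\, e^{t/8}(1 + O_\infty(t))$. The second line of the statement then follows by Taylor-expanding $e^{t/8} = 1 + O(t)$ and absorbing into $O_\infty(t)$.

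The $O_\infty(e^{-1/t})$ correction accounts for truncating the parametrix outside $\{r \leq \pi/3\}$, where the expansion is no longer trusted. One invokes an off-diagonal Varadhan-type Gaussian upper bound on the true heat kernel $p_t^{\S^2}$ (available through the Li--Yau gradient estimate, Bismut formulas, or direct spectral expansion on $\S^2$), which in $C^k$-norm gives $|\p_r^k p_t^{\S^2}(r)| \lesssim_k t^{-N_k}\, e^{-r^2/(2t)}$; restricted to $r \geq \pi/3$ this is $O_\infty(e^{-c/t})$, which one absorbs into the stated $O_\infty(e^{-1/t})$ after rescaling constants. The main technical obstacle is not the pointwise asymptotic, which is classical, but obtaining the $O_\infty$ uniformity, i.e. $C^k$ control in $r$ of the remainders rather than just pointwise bounds. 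This forces one to differentiate the Duhamel identity and control $\p_r^\ell$ of $E_N$ and of the convolution, which is tedious but routine and is where the bulk of the argument in \cite{Nagase} is spent; the radial symmetry of $p_t^{\S^2}$ simplifies this since all derivatives in question reduce to derivatives in the single variable $r$.
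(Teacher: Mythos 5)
Note first that the paper does not prove this proposition at all: it is imported from \cite{Nagase} (which builds on the elementary-formula / semiclassical-bridge approach of \cite{Elworthy, ElworthyTruman, Ndumu}), so there is no internal proof to compare with, and your parametrix argument is a genuinely different route from the cited one. Nagase obtains explicit elementary-function expressions for spherical heat kernels via recurrence relations across dimensions (odd-dimensional spheres admit closed forms, and $\S^2$ is reached by an explicit descent formula), from which the uniform control of all $r$-derivatives on $\{r\le \pi/3\}$ can be read off directly; your route runs the generic Minakshisundaram--Pleijel expansion with the Van Vleck factor $\sqrt{r/\sin r}$ plus Duhamel, which is more robust (any manifold, inside the injectivity radius) but yields no explicit formulas. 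Two steps in your sketch deserve more than the word ``standard''. First, to absorb the Duhamel correction into $p_t^{\R^2}(r)\,O_\infty(t)+O_\infty(e^{-1/t})$ uniformly over $r\le\pi/3$ you need the remainder with the \emph{sharp} Gaussian exponent $e^{-r^2/(2t)}$: a Levi-type iteration that degrades the exponent is not good enough at intermediate scales $r\asymp\sqrt{t\log(1/t)}$, where a degraded Gaussian is neither $O\bigl(p_t^{\R^2}(r)\,t\bigr)$ nor $O(e^{-c/t})$. This is fixable --- use the convexity inequality $d(x,z)^2/t\le d(x,y)^2/s+d(y,z)^2/(t-s)$ inside the time convolution, together with an a priori sharp-exponent upper bound on $p_s^{\S^2}$ allowing a polynomial factor $(1+r^2/s)^m$, which you then absorb by taking the order $N$ of the parametrix large --- but it has to be said, since it is exactly where the uniformity in $r\le\pi/3$ is won or lost. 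Second, the statement is an $O_\infty$ statement, i.e. all $r$-derivatives of the remainders must be controlled; you flag this as routine, but differentiating the Duhamel term brings in derivatives of $p_s^{\S^2}$ at small $s$, so the same sharp-Gaussian bookkeeping must be repeated at every order (this is where the work in \cite{Nagase} is effectively spent, and where the explicit recurrences give it almost for free). With those two points made explicit, your sketch is a correct, self-contained alternative to citing \cite{Nagase}.
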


Note that it is important for this analytical estimate to hold to be away from the {\em cut-locus} $M \subset \S^2 \times \S^2$. Since the log-singularity is integrable in $d=2$, we have (recall the definition of $\hat\calQ$ from~\eqref{e.Q0}) 
\begin{align*}\label{}
\hat\calQ(t,x)
&= \int_{\S^2} 
\chi(t,y) p^{\S^2}_{|t|}(0,y) \log [ \frac 1 2 \sin^{-1}(\frac {d(x,y)} 2)] \mathrm{Vol}_{\S^2}(dy)  \\
& = \int_0^{\pi/3}\int_0^{2\pi} \chi(t,re^{i \theta})  
p^{\S^2}_{|t|}(r) \log [ \frac 1 2  \sin^{-1}(\frac {d(x,y)} 2)]  \sin(r) dr d \theta
\end{align*}

\ni
\underline{\bf Proof of the lower bound in ~\eqref{e.Ess1S}.}

\medskip
This bound is easier, as $d_{\S^2}(x,y) \leq \| x -y\|_2$ which implies $\log[1/2 \sin^{-1}(d_{\S^2}(x,y)/2)] \geq \log[1/2 \sin^{-1}(\| x -y\|_2/2)]$. Furthermore, it turns out that for any $t>0$ and $r \geq 0$, we have 
\[
p_t^{\S^2}(r) \geq p_t^{\R^2}(r)\,.
\]
This is a consequence of the parabolic Harnack inequality in curved spaces. See for example \cite{Baudoin}.  
If one does not want to rely on this inequality, one can use instead the above Proposition \ref{pr.nag} and proceed as for the upper bound below.  We get 
\begin{align*}\label{}
\hat\calQ(t,x) 
& \geq \int_0^{\pi/3}\int_0^{2\pi} \chi(t,re^{i \theta})  
p^{\R^2}_{|t|}(r) \log [\frac 1 { 2 \sin(\| x- r e^{i \theta}\|_2/2)}]  \sin(r) dr d \theta \\
& \geq \int_0^{\pi/3}\int_0^{2\pi} \chi(t,re^{i \theta})  
p^{\R^2}_{|t|}(r) \log \frac 1 {\| x- r e^{i \theta}\|_2}  (r - O(r^3)) dr d \theta
\end{align*} 
The integral corresponding to integrating $rdr$ is exactly the kernel in the flat torus $\T^2$ which is analyzed by scaling arguments in Lemma 3.9  in \cite{sine}. It remains to justify that the remaining term corresponding to integrating $O(r^3) dr$ is negligible. It is not hard to check that 
uniformly in $x=r' e^{i \theta'}$ with $r' \leq \pi/3$, one has 
\begin{align}\label{e.rlogr}
\int_0^{\pi/3}\int_0^{2\pi} \chi(t,re^{i \theta})  
p^{\R^2}_{|t|}(r) \log \frac 2 {\| x- r e^{i \theta}\|_2}  r^3 dr d \theta 
& \leq O(1) \int_0^{\pi/3} p^{\R^2}_{|t|}(r) \log \frac 1 r   r^3   dr  \nn \\
& \leq O(1) \int_0^{\pi/3} \frac 1 {2\pi t} e^{-r^2/(2t)} \log \frac 1 r   r^3   dr \nn  \\
&  \leq O(1) |t| \log 1/|t| \nn \\
& \leq O(1) \|z\|_\s^{2} \log \frac 1 {\|z\|_\s}
\end{align} 
\margin{Check that estimate :) Seems fine. Idea is $\int_0^{\sqrt{t}}$ and analyse. This gives an upper bound $\int r^3 \log 1/r$ which by IPP gives $r^4 \log 1/r$ which gives what we want. On the other side, $r^2 > t$, so $\exp(-r^2 /t) \leq C t/r^2$ for example which is enough. }

Such an estimate on the deviation from $\calQ_{\T^2}$ together with Lemma 3.9 in \cite{sine} implies a lower bound for \eqref{e.Ess1S}. 
As in \cite{sine}, it easily transfers to the regularised kernel $\calQ_\eps$ and leads to a lower bound for \eqref{e.Ess1epsS}. 

\medskip
\ni
\underline{\bf Proof of the upper bound in ~\eqref{e.Ess1S}.}
\medskip

We now look for an upper bound on $\calQ$ close to $\calQ_{\T^2}$. 
We start by using Lemma \ref{l.distort} which gives us
\begin{align*}\label{}
\hat\calQ(t,x)
&=  \int_0^{\pi/3}\int_0^{2\pi} \chi(t,re^{i \theta})  
p^{\S^2}_{|t|}(r) \log [ \frac 1 {2 \sin(d(x,y)/2)}  ]  \sin(r) dr d \theta \\
& \leq 
 \int_0^{\pi/3}\int_0^{2\pi} \chi(t,re^{i \theta})  
p^{\S^2}_{|t|}(r) (\log [ \frac 1 {d(x,y)}] + \log[ 1 + O(d(x,y)^2)] ) \sin(r) dr d \theta \\
& \leq 
 \int_0^{\pi/3}\int_0^{2\pi} \chi(t,re^{i \theta})  
p^{\S^2}_{|t|}(r) [\log \frac 1 {\| x- r e ^{i \theta}\|_2} +  C (r^2 + r'^2) ]  r dr d \theta 
\end{align*}
where we used in the last inequality Lemma \ref{l.distort}, $\log(1/(1-x)) \leq C x$, $d(x,y)^2 \leq r^2 + r'^2$ and $\sin(r) \leq r$. 
Since $p_{|t|}^{\S^2}(\cdot)$ integrates to 1 on $\S^2$ and since $x=r' e^{i \theta'}$, the integral corresponding to the term $r'^2$ is bounded by $O(\|z\|_\s^2)$ while the integral corresponding to $r^2$, as for the lower bound in~\eqref{e.rlogr}, is bounded by $|t| \leq \|z\|_\s^2$. 

As such, up to a $\|z\|_\s^2$ correction term, it remains to bound from above,
\[ 
 \int_0^{\pi/3}\int_0^{2\pi} \chi(t,re^{i \theta})  
p^{\S^2}_{|t|}(r) \log \frac 1 {\| x- r e ^{i \theta}\|_2}\,. 
\]
We shall use for this the estimate form Proposition \ref{pr.nag} which gives us 
\begin{align*}\label{}
p_t^{\S^2}(r) 
& = 
p_t^{\R^2}(r) \bigl(\sqrt{\frac r {\sin(r)}} + O_\infty(t)\bigr) + O_\infty(e^{-1/t}) \\
& \leq p_t^{\R^2}(r)(1 + Cr^2 + C' t) +O(e^{1/t})  
\end{align*}
Using the same earlier computation~\eqref{e.rlogr} and the fact that $|t|\leq \|z\|_\s^2$, we obtain together with our previous lower bound that
\begin{align*}\label{}
\calQ(t,x) & = \hat \calQ(t,x) + R(t,x) \\
& = \int_0^{\pi/3}\int_0^{2\pi} \chi(t,re^{i \theta})  
p^{\R^2}_{|t|}(r) \log \frac 2 {\| x- r e ^{i \theta}\|_2} r dr d \theta  + O(\|z\|_\s^2 \log \frac 1 {\|z\|_\s}) + R(z) \\
& = \hat\calQ_{\T^2}(z)  + R(z) + O(\|z\|_\s^2 \log \frac 1 {\|z\|_\s})\,,
\end{align*}
if, say, one chooses the same approximation of identity $\chi(t,x)$ on $\T^2$ and $\S^2$.
\margin{I added a constant $\log 1/2$ in my definition of $G$ which is anyway up to constants to match more clearly with $\calQ_{\T^2}$.} 
Using Lemma 3.1 from \cite{sine}, we readily obtain the estimate~\eqref{e.Ess0S} with an error in $O(\eps^2 \log \frac 1 \eps)$ (as opposed to $O(\eps^2)$ in \cite{sine}). 

To finish the proof of Lemma \ref{l.keyS}, it only remains to establish the estimate~\eqref{e.Ess2S} which, as pointed out in Lemma 3.7. in \cite{sine}, follows easily from the Lemma stated below. \qed

\begin{lemma}\label{}
By denoting space-time points $z=(t,x)= (t, re^{i \theta})$ in the normal coord. syst.~\eqref{e.NC}, we have 
\begin{align}\label{e.Ess1L}
|\p_ r \calQ(z)| \leq C/ \|z\|_\s \;\;\;\;\; \text{      and      }  \;\;\;\;\;
|\p_t \calQ(z)| \leq C/  \|z\|_\s^2
\end{align}
which can be more concisely written as $|\p_i \calQ(z)| \leq C/ \|z\|_\s^{s_i}$. 
\end{lemma}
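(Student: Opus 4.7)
The plan is to write $\calQ = \hat\calQ + R$ with $R$ smooth, so that the estimates on $R$ are trivial, and then to differentiate under the integral sign in
\[
\hat\calQ(t,x) \;=\; \int_{\S^2} \hat K(t,y)\, G_{\S^2}(x,y)\, dy\,,
\]
where $\hat K(t,y) = [\chi(t,y)+\chi(-t,y)]\,p^{\S^2}_{|t|}(0,y)$ is supported on $\{|t|<2\}\cap\{d_{\S^2}(0,y)<\pi/2\}$. The key structural inputs are Proposition~\ref{pr.nag} (to transfer heat-kernel bounds from the flat setting) together with the explicit form of $G_{\S^2}$ from \eqref{e.GreenSphere} and the distributional identity $-\Delta_y G_{\S^2}(x,\cdot) = 2\pi\,\delta_x - \tfrac{1}{2}$.

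For the time-derivative bound, I would split the derivative into the contribution where $\p_t$ hits the cutoff $\chi$ (smooth and bounded, yielding an $O(1)$ term since $\int p^{\S^2}_{|t|}|G_{\S^2}|\,dy$ is bounded uniformly in $t$) and the contribution where $\p_t$ hits $p^{\S^2}_{|t|}$. For the latter, the heat equation $\p_t p^{\S^2}_t = \tfrac{1}{2}\Delta_y p^{\S^2}_t$ allows integration by parts:
\[
\int \chi(t,y)\,\Delta_y p^{\S^2}_{|t|}(y)\,G_{\S^2}(x,y)\,dy
\;=\; \int p^{\S^2}_{|t|}(y)\,\Delta_y\bigl[\chi(t,y)\,G_{\S^2}(x,y)\bigr]\,dy\,,
\]
with no boundary terms since $\chi$ is compactly supported and $\S^2$ is closed. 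Expanding the Laplacian and using the Green-function identity, the only singular contribution is $-2\pi\,\chi(t,x)\,p^{\S^2}_{|t|}(0,x)$, while the cross terms $\nabla\chi\cdot\nabla G_{\S^2}$ and $G_{\S^2}\Delta\chi$ are bounded since $\nabla\chi$ and $\Delta\chi$ are supported in a region where $G_{\S^2}$ is smooth. Proposition~\ref{pr.nag} combined with the Gaussian upper bound $p^{\R^2}_{|t|}(r)\leq C/(|t|+r^2)$ then gives $p^{\S^2}_{|t|}(0,x)\lesssim 1/(|t|+r^2)\leq 1/\|z\|_\s^2$, as required.

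For the spatial derivative, differentiating under the integral moves the derivative onto $G_{\S^2}(x,y)$, and \eqref{e.GreenSphere} together with $d_{\S^2}(x,y)$ bounded away from $\pi$ on the support of $\chi$ yields $|\p_r G_{\S^2}(x,y)|\leq C/d_{\S^2}(x,y)$. Thus
\[
|\p_r \hat\calQ(t,x)| \;\leq\; C\int_{\S^2} p^{\S^2}_{|t|}(0,y)\,\frac{dy}{d_{\S^2}(x,y)}\,.
\]
Using Proposition~\ref{pr.nag} to compare $p^{\S^2}_{|t|}$ with $p^{\R^2}_{|t|}$, Lemma~\ref{l.distort} to compare $d_{\S^2}$ with the Euclidean distance in normal coordinates, and passing to polar normal coordinates around the north pole, the integral is reduced up to lower-order terms to the Euclidean expression
\[
\int_{\R^2} \frac{1}{2\pi|t|}\,e^{-|u|^2/(2|t|)}\,\frac{du}{|u-\tilde x|} \;\lesssim\; \frac{1}{\sqrt{|t|}+|\tilde x|}\,,
\]
where $|\tilde x|\asymp r$ is the Euclidean image of $x$; the displayed Euclidean estimate is standard and follows from the rescaling $u=\sqrt{|t|}\,v$. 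With $|\tilde x|\asymp r$ this gives $|\p_r\hat\calQ(t,x)|\lesssim 1/(\sqrt{|t|}+r)= 1/\|z\|_\s$.

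The main technical point requiring care is the integration by parts for the $\p_t$ estimate, where one must correctly account for the distributional Laplacian of $G_{\S^2}$; all cut-locus and antipodal issues are avoided by the spatial support restriction on $K$, and the spherical curvature corrections from Proposition~\ref{pr.nag} affect only lower-order, bounded contributions.
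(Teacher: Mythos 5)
Your argument is correct and follows essentially the same route as the paper: write $\calQ=\hat\calQ+R$, obtain the $\p_r$ bound by differentiating under the integral so that the derivative falls on the explicit Green function (yielding the $1/d(x,y)$ singularity and then the heat-kernel convolution bound $1/(\sqrt{|t|}+r)$), and handle $\p_t$ by trading the time derivative for a spatial Laplacian via the heat equation, which the Green-function identity collapses to the heat kernel itself, bounded by $1/\|z\|_\s^2$ — the paper phrases this step as the identity $\p_t\calQ=\tfrac12\Delta\calQ+\tilde R$ rather than your explicit integration by parts, but it is the same mechanism. (Only nitpick: $\int p^{\S^2}_{|t|}|G_{\S^2}|\,dy$ is of order $\log(1/\|z\|_\s)$ rather than $O(1)$ in general; it is $O(1)$ on the support of $\p_t\chi$ in the relevant regime, and in any case far below the target $1/\|z\|_\s^2$, so nothing breaks.)
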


We shall use the expression (recall~\eqref{e.Q0})
\[
\hat \calQ(t,x) =
 \int_{\S^2} 
\chi(t,y) p^{\S^2}_{|t|}(0,y) \log [ \frac 1 2 \sin^{-1}(\frac {d(x,y)} 2)] \mathrm{Vol}_{\S^2}(dy) 
\]
It is clear one can differentiate under the integral sign here, which gives us the bound
\begin{align*}\label{}
|\p_r \calQ(t,x)| 
& \leq  C \int_{\S^2} \chi(t,y) p^{\S^2}_{|t|}(0,y) \frac 1  {d(x,y)}  \mathrm{Vol}_{\S^2}(dy) 
+ |\p_r R(z)| 
\end{align*}
From the properties of the heat kernel $p_t^{\S^2}$ we readily get that it is bounded by 
\[
O(1) \frac 1 {\sqrt{t} + d(0,x)} = O(1) \frac 1 {\|z=(t,x)\|_\s^1}\,.
\]
Now for the $\p_t$ derivative, instead of differentiating under the integral sign the spherical heat kernel $p_t^{\S^2}(0,r)$ (which would require unecessary detailed estimates), we use instead the identity 
\begin{align*}\label{}
\p_t \calQ(z) = \frac 1 2 \Delta \calQ + \text{(smooth. corr.)} \tilde R(z)\,.
\end{align*}
The same analysis as the above one (using, say, the expression of $\Delta_{\S^2} f$ from~\eqref{e.DLB}) shows that $
|\Delta \calQ(z)| \leq O(1) \frac 1 {\| z\|_\s^2}$ which concludes our proof.  \qed

\subsection{Multi-fractal spectrum of $e^{\gamma \Phi}$ on the sphere}\label{ss.SpecS}

Let us now briefly outline why the same moment bounds as in Proposition \ref{pr.moments} extend to the curved case of $\R\times \S^2$. (In particular, we get Proposition \ref{pr.conv} on the sphere). As in Section \ref{s.RE}, the main step is to obtain a uniform control of $q^{th}$-moments on macroscopic domains. For example the fact that for any $\gamma<\hat \gamma_c = 2 \sqrt{2}$ and any $q<8/\gamma^2$ (this is the direct analog of Lemma \ref{l.T0}), 
\begin{align*}\label{}
\sup_{\eps>0} \Eb{(\int_{D=[0,1]\times \S^2} \Theta_{\eps})^q} <\infty\,.
\end{align*}
The proof of this estimate follows the same lines as on the flat case (Kahane convexity inequality obviously holds in this setting as well). The only difference is that one cannot rely on an exact scaling argument as in Lemma \ref{l.KahBound}. To overcome this lack of exact scale invariance, one may rely on a suitably chosen $\delta$-grid covering $[0,1]\times \S^2$. As in the flat-case, the $\delta$-parabolic tiles will be $[0,\delta^2]$-long in time. To handle the curved nature of the space-variable, one may rely for example on a particularly  isotropic $\delta$-grid for the sphere, known as the \textbf{spherical geodesic grid}.  This gives a mostly-isotropic tiling of $\S^2$ made of $O(\delta^{-2})$ geodesic $\delta$-hexagons and only $12$ $\delta$-pentagons at macroscopic distance from each other (forming a icosahedron). Now, as $\delta \to 0$, each geodesic $\delta$-hexagon is very well approximated by an euclidean $\delta$-hexagon (with similar quantitative distortion bounds as in Lemma \ref{l.distort}). One can then compare the covariance of the field $\Phi_{\delta\eps}(z)$ on $\R\times \S^2$ with the flat case as in Lemma \ref{l.KahBound} up to an additive error of  $\log \frac 1 {1- C\delta^2}$ which follows from (the analog of) Lemma \ref{l.distort}. Then, this allows via Kahane convexity inequality to compare the $q^{th}$ moments in these geodesic $\delta$-hexagons with the $q^{th}$ moments in the flat case. Taking $\delta \to 0$ with careful uses of the triangular inequality for $L^q$, the proves the desired estimate. 

N.B. In fact as we have a lot of margin as $\delta\to 0$, instead of using the above spherical geodesic partition grid, one may as well rely on sufficiently many translates of unions of $\delta$-circles which altogether cover $\S^2$ (the point is that we do not need a partition of $\S^2$, some overlap is fine). This way, it is enough to apply directly Lemma \ref{l.distort}. 

These were the main differences between $\T^2$ and $\S^2$, the rest of the proof of Theorem \ref{th.mainS} proceeds exactly as in Sections \ref{s.TORUS} and \ref{s.RE} modulo the existence of a spherical Schauder estimate, which is the subject of the next remark.  
\begin{remark}\label{r.SchauderSphere}
As pointed to us by Nikolay Tzvetkov, a parabolic Schauder estimate such as Proposition \ref{th.SchauderP} should extend to the case of the sphere using the works \cite{Nikolay1, Nikolay2}.  For example the later paper constructs Besov spaces on manifolds without boundary and Proposition 2.5. from \cite{Nikolay2} shows that these Besov spaces are locally as in the flat Euclidean case (in quantitative terms). This should imply the desired parabolic regularity estimate (Proposition \ref{th.SchauderP}) in the case of the sphere $\S^2$. 
\end{remark}

\section{Comparison with Sine-Gordon and use of regularity structures}\label{s.RS}

To conclude the paper, let us make a comparison with the Sine-Gordon SPDE~\eqref{e.SG} and in particular the use of regularity structures in \cite{sine}. See also the more recent work \cite{CHS} which pushed the analysis all the way to $\beta^2=8\pi$.
\bi
\item We started by analyzing the ``Da Prato-Debussche'' phase $\gamma<\gamma_{dPD}=2\sqrt{2} -\sqrt{6}$ which corresponds to the regime $\beta^2 < 4\pi$ in Sine-Gordon. (This corresponds to Section 3 in \cite{sine}).  

\item In \cite{sine}, in order to push the existence of local solutions of~\eqref{e.SG} to higher values of $\beta$ (namely $\beta^2 < \frac {16} 3 \pi$), {\em second-order processes} are analysed and used within the framework of regularity structures from \cite{HairerReg}. These second-order processes correspond to re-centred versions of 
$\Psi_\eps (K * \Psi_\eps) = \Wick{e^{i\beta \Phi_\eps}} (K* \Wick{e^{i\beta \Phi_\eps}})$ (see sections 4,5 in \cite{sine}).  It seems very likely that a similar (tedious) analysis analogous to sections 4,5 in \cite{sine} (plus the fact one is lacking of arbitrary high moments here) should be doable in the Liouville case for an appropriate re-centred version of the second order processes $\Theta_\eps (K*\Theta_\eps)$. If so and once plugged into the regularity structures machinery, this would push the local existence for~\eqref{e.DLSs} (in a stronger sense than in Theorem \ref{th.main2}) for all $\gamma< \gamma^{(2)}$, where $\gamma^{(2)}$ is the smallest solution to $4 +3(\g2 - 2\sqrt{2} \gamma)=0$. This threshold $\gamma^{(2)}$ would be the analog of $\beta^{(2)}:=\sqrt{16\pi /3}$ in \cite{sine}. 

\item For Sine-Gordon, in order to push the local existence to higher values of $\beta$, higher-order processes would need to be analyzed. The suitable analysis of $k^{th}$-order processes should in principle enable to define the Sine-Gordon SPDE all the way to $\beta^{(k)}$, largest solution of 
\[
2k - (k+1) \frac {\beta^2} {4\pi} =0\,.
\]
We see here that $\beta^{(k)}$ indeed converges to $\beta_c=\sqrt{8\pi}$ as the order $k\to \infty$. In the Liouville case, one could then expect to define local solutions for~\eqref{e.DLSs} all the way to $\gamma^{(k)}$, smallest solution to 
\[
2k + (k+1) (\g2 - 2\sqrt{2} \gamma) =0\,.
\]
As the regularity function $\g2 - 2\sqrt{2} \gamma$ reaches regularity $-2$ precisely at $\gamma_{\mathrm{pos}}=2\sqrt{2} -2$, this means that $\gamma^{(k)} \to_{k\to \infty} \gamma_{\mathrm{pos}}$. Interestingly, this means that in our Theorem \ref{th.main2} (where we used the positivity of the non-linearity), we obtained the same threshold as what an arbitrary high-order regularity structure analysis would potentially give (with the drawback as mentioned earlier that we do not have a convergence result $X_\eps \to X$ in Theorem \ref{th.main2}, while such a stability result would follow from regularity structures). 
\ei

\begin{question}\label{q.c}
It is then an interesting problem to know whether $\gamma_{\mathrm{pos}}$ is the critical value above which the Liouville SPDE~\eqref{e.DLSs} would become super-critical, or whether local existence can be pushed yet to higher values of $\gamma$ by some other means. See also the discussion on intermittency and misleading subcriticality in Subsection \ref{ss.intermit}. 
\end{question}

\appendix
\section{Regularity estimates on Multiplicative chaos by direct moment computations}\label{a.1}

We give here a different proof for the (integer) polynomial moments of Gaussian multiplicative chaos which is not based on Kahane's convexity inequality. It is only a particular case of our Proposition \ref{pr.moments} but it has the advantage of being less technical and more importantly, it highlights the similarities/differences with Sine-Gordon (\cite{sine}). See also the appendix A in \cite{HairerQuastel} which deals with more general kernels. (In particular our case satisfies their assumption (A.2) and is thus covered by their results).

\begin{proposition}[Analog of Theorem 3.2 in \cite{sine}]\label{pr.Asine1}
For any $\gamma \in [0,2)$, any integer 
$1\leq N<N_c(\gamma) = \frac 8 {\gamma^2}$ and any $\kappa>0$ sufficiently small, one has
\begin{equ}[e.Nmoments1]
\Eb{\<{\phi_x^\lambda, \Theta_\eps}^N}  \lesssim \lambda^{-\g2 N(N-1)}
\end{equ}
\begin{equ}[e.Nmoments2]
\Eb{|\<{\phi_x^\lambda, \Theta_\eps - \Theta_{\bar \eps}}|^2}  \lesssim 
 (\eps\wedge \bar \eps)^{2\kappa} \lambda^{-2\kappa-{\gamma^2}}\;,
\end{equ}
uniformly over all test functions $\phi$ supported in the unit ball and bounded by 1, all $\lambda \in (0,1]$, and locally uniformly over space-time points  $x\in \R \times \T^2$ or $x\in \R \times \S^2$.  
\end{proposition}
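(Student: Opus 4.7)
\textbf{Proof proposal for Proposition \ref{pr.Asine1}.}

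The starting point is the Gaussian chaos identity: since $\{\Phi_\eps(y)\}_y$ is a centred Gaussian field and $\Theta_\eps(y) = \exp(\gamma \Phi_\eps(y) - \frac{\gamma^2}{2}\calQ_\eps(0))$, one has for any integer $N \geq 1$ and any $y_1,\ldots,y_N$,
\[
\Eb{\prod_{i=1}^N \Theta_\eps(y_i)} \;=\; \exp\!\Big(\gamma^2 \sum_{1\leq i<j\leq N} \calQ_\eps(y_i - y_j)\Big),
\]
and an analogous identity holds with possibly different regularizations $\eps_i$ via the cross-covariance kernels $\calQ_{\eps_i,\eps_j}$. The plan is to combine this identity with the upper bound $\calQ_\eps(z)\leq -\log(\|z\|_\s + \eps)+C$ from \eqref{e.Ess1eps} to obtain, uniformly in $\eps>0$,
\[
\Eb{\prod_{i=1}^N \Theta_\eps(y_i)} \;\lesssim\; \prod_{i<j} \|y_i - y_j\|_\s^{-\gamma^2}.
\]

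\textbf{Proof of \eqref{e.Nmoments1}.} Expand $\Eb{\<\phi_x^\lambda,\Theta_\eps\>^N}$ as an $N$-fold integral and use the above bound. Since $|\phi_x^\lambda| \lesssim \lambda^{-4}\mathbf{1}_{B_\s(x,\lambda)}$ (recall the parabolic volume element), one gets
\[
\Eb{\<\phi_x^\lambda,\Theta_\eps\>^N} \;\lesssim\; \lambda^{-4N} \int_{B_\s(x,\lambda)^N} \prod_{i<j} \|y_i - y_j\|_\s^{-\gamma^2}\, dy_1\cdots dy_N.
\]
Performing the parabolic change of variables $y_i = x + \lambda \cdot z_i$ (which scales the volume element by $\lambda^{4N}$ and each pair-distance by $\lambda$), the $\lambda$-dependence factors out as $\lambda^{-\frac{\gamma^2}{2} N(N-1)} = \lambda^{-\g2 N(N-1)}$, and it remains to prove the uniform bound
\[
I_N \;:=\; \int_{B_\s(0,1)^N} \prod_{1\leq i<j\leq N} \|z_i - z_j\|_\s^{-\gamma^2}\, dz_1\cdots dz_N \;<\; +\infty
\]
when $N < 8/\gamma^2$. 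This is a parabolic Selberg-type integral; the main step here is to proceed by induction using a ``nearest neighbour'' decomposition: fix the ordering such that $z_1$ is isolated and integrate the remaining variables by conditioning on the distance $r_i = \min_{j<i}\|z_i - z_j\|_\s$. Each such variable contributes a factor $\int_0^1 r_i^{3-(i-1)\gamma^2}\,dr_i$ (the $r_i^3$ coming from the parabolic volume form in dimension $4$), which is finite precisely when $(i-1)\gamma^2 < 4$; since we need this up to $i = N$, the condition is $N-1 < 4/\gamma^2$, i.e. $N < 8/\gamma^2$. (Alternatively, one can invoke the classical Kahane bound for GMC moments in dimension $d=4$.)

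\textbf{Proof of \eqref{e.Nmoments2}.} Expanding the second moment,
\[
\Eb{|\<\phi_x^\lambda,\Theta_\eps - \Theta_{\bar\eps}\>|^2} \;=\; \iint \phi_x^\lambda(y)\phi_x^\lambda(y') \,\calD_{\eps,\bar\eps}(y-y')\, dy\, dy'
\]
where $\calD_{\eps,\bar\eps}(z) = e^{\gamma^2 \calQ_\eps(z)} - e^{\gamma^2 \calQ_{\eps,\bar\eps}(z)} - e^{\gamma^2 \calQ_{\bar\eps,\eps}(z)} + e^{\gamma^2 \calQ_{\bar\eps}(z)}$. Writing each exponent as $e^{\gamma^2 \calQ(z)}$ times a bounded ``difference'' factor, and using the interpolation of estimate~\eqref{e.Ess2} (namely $|\calQ_\eps - \calQ|(z) \leq C_\kappa(\eps/\|z\|_\s)^\kappa$ for any small $\kappa>0$, which one obtains by handling separately the regimes $\eps \lessgtr \|z\|_\s$), one finds
\[
|\calD_{\eps,\bar\eps}(z)| \;\lesssim\; (\eps \vee \bar\eps)^{2\kappa}\, \|z\|_\s^{-\gamma^2 - 2\kappa}\,,
\]
where the exponent $2\kappa$ comes from the fact that $\calD$ is a genuine \emph{second} difference in the regularization parameter. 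The integral against $\phi_x^\lambda(y)\phi_x^\lambda(y')$ is handled as in part (1) with $N=2$: scaling yields the factor $\lambda^{-\gamma^2 - 2\kappa}$, and the residual dimensionless integral is finite provided $\gamma^2 + 2\kappa < 4$, which holds for $\gamma<2$ and $\kappa$ small enough.

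\textbf{Main obstacle.} The substantive step is the finiteness of $I_N$ in part (1); once that is in hand, both statements follow by straightforward scaling, and the proof for the sphere proceeds identically using the estimates of Proposition \ref{l.keyS} in place of Proposition \ref{l.key}. The $L^2$ estimate \eqref{e.Nmoments2} is essentially a carbon copy of the Sine--Gordon argument in \cite[Sec.~3]{sine}, whereas the genuinely new feature is the nonlinear (intermittent) $N$-dependence in \eqref{e.Nmoments1}, reflected in the nonlinear spectrum $N \mapsto \g2 N(N-1)$ and the hard cutoff $N < 8/\gamma^2$.
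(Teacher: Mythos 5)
Your overall reduction is the same as the paper's: you use the positivity of $\Theta_\eps$ to discard the oscillation of the test function, the Gaussian identity $\Eb{\prod_i \Theta_\eps(y_i)}=\exp(\gamma^2\sum_{i<j}\calQ_\eps(y_i-y_j))$, the upper bound~\eqref{e.Ess1eps}, and the exact parabolic scaling $\|\lambda\cdot w\|_\s=\lambda\|w\|_\s$ (Jacobian $\lambda^4$) to pull out the factor $\lambda^{-\g2 N(N-1)}$; for~\eqref{e.Nmoments2} the paper, like you, essentially defers to the computation behind (3.12) in \cite{sine}. The problem is the step you yourself single out as the substantive one: the finiteness of $I_N:=\int_{\Lambda_0^N}\prod_{i<j}\|w_i-w_j\|_\s^{-\gamma^2}\,dw$ for \emph{every} integer $N<8/\gamma^2$. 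Your nearest-neighbour induction bounds, for the $i$-th point, all $i-1$ interaction factors by $r_i^{-(i-1)\gamma^2}$ with $r_i=\min_{j<i}\|w_i-w_j\|_\s$, which forces $(N-1)\gamma^2<4$; the assertion that this is ``i.e.\ $N<8/\gamma^2$'' is an algebra slip, since $N<1+4/\gamma^2$ is strictly more restrictive than $N<8/\gamma^2$ whenever $\gamma^2<4$. Concretely, at $\gamma^2=2$ the proposition claims $N=3$, but your $i=3$ factor is $\int_0^1 r^{3-2\gamma^2}dr=\int_0^1 r^{-1}dr=\infty$, even though $I_3$ is finite there: the loss comes from pretending $w_3$ sits at distance $r_3$ from \emph{both} previous points, whereas the true integrability condition is governed by the simultaneous collapse of a cluster of $k$ points, which costs volume $r^{4(k-1)}$ against singularity $r^{-\gamma^2 k(k-1)/2}$ and is integrable iff $k<8/\gamma^2$. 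So, as written, your argument proves the statement only for $N<1+4/\gamma^2$ and leaves a genuine gap in the stated range.

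The paper closes exactly this gap in Lemma \ref{l.MI} by a different induction: it introduces $I_{\eps,L,N}$ with a minimal-separation cutoff, uses the exact parabolic scaling $w\mapsto\frac12\cdot w$ (Jacobian $2^{-4(N-1)}$ against a gain $2^{\gamma^2\binom{N}{2}}$, a contraction precisely when $N<8/\gamma^2$), and handles the remaining configurations by splitting the points into two well-separated clusters and applying the induction hypothesis to each cluster. Your parenthetical fallback (``invoke the classical Kahane bound for GMC moments in dimension $d=4$'') does point to the correct condition $N<2D/\gamma^2$ with homogeneous dimension $D=4$, but it cannot be cited off the shelf: the classical statements concern isotropic log-kernels on Euclidean $\R^d$, and the paper is explicit that no exact log-$*$-scale-invariant kernel is available in the parabolic setting; one must either redo the deterministic integral estimate using only the $r^4$ volume growth of parabolic balls (which is what Lemma \ref{l.MI} does) or run a genuine cluster/multiscale decomposition. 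With the nearest-neighbour step replaced by such an argument, the rest of your proof goes through.
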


%
%

\ni
{\em Proof.}
The second estimate follows almost verbatim from  \cite[Equation (3.12)]{sine}. Let us then focus on the first estimate. 

Since the distribution $\Theta_\eps \geq 0$ is not oscillatory, it makes the dependence on the test functions $\phi$ much easier to handle: for any test function $\phi$:
\begin{align*}\label{}
\Eb{\<{\phi_x^\lambda, \Theta_\eps}^N} 
& \leq \|\phi\|_\infty \lambda^{-4N} \Eb{(\int_{\Lambda(x,\lambda)} \Theta_\eps(x) dx)^N} \\
& \leq \lambda^{-4N} \int_{\Lambda(x,\lambda)^N}  {\prod_{1\leq i < j \leq N}  \exp(\gamma^2 \calQ_\eps(z_i-z_j))} 
dz     
\end{align*}
Using Lemma \ref{l.key}, this gives us the bound
\begin{align*}\label{}
\Eb{\<{\phi_x^\lambda, \Theta_\eps}^N} 
& \leq O(1) 
\lambda^{-4N} \int_{\Lambda(x,\lambda)^N}  \frac 1 {\prod_{1\leq i < j \leq N}  (\|z_i-z_j\|_\s+\eps)^{\gamma^2} } 
dz     
\end{align*}
\begin{remark}\label{}
Note here that as opposed to the {\em Coulomb-gas like} case in the proof of Theorem 3.2 in \cite{sine}, there is no vanishing numerator here which produces important cancellations. This absence of cancellations is due to a real-valued $\gamma$ instead of $i \beta$ and is the reason for the {\em intermittent behaviour.} 
\end{remark}

Now by making the following parabolic change of variable 
\[
z=(z_0,z_1,z_2) = \lambda\cdot w := (\lambda^2 w_0, \lambda w_1, \lambda w_2)
\]
we have that $\| \lambda\cdot w\|_\s = \lambda \| w \|_\s$  and we obtain the following upper bound:
\begin{align*}\label{}
\Eb{\<{\phi_x^\lambda, \Theta_\eps}^N} 
& \leq  \lambda^{-\g2 N(N-1) }\int_{\Lambda_0^N}  \prod_{1 \leq i < j \leq N} \frac 1 {(\| w_i - w_j\|_\s + \frac \eps \lambda)^{\gamma^2}} dw\,,
\end{align*} 
where $\Lambda_0$ denotes the parabolic ball $\Lambda(0,1)$ of radius 1. It remains to argue that this integral behaves well when $\gamma^2 < 4$ uniformly in $\eps>0$. This will follow readily from Lemma \ref{l.MI} below (which deals at once the uniformity in $\eps,\lambda$). 

%
%


\begin{lemma}\label{l.MI}
For all $0\leq \gamma < 2$ 
and all integer $1\leq N<N_c(\gamma) = \frac 8 {\gamma^2}$ 
\footnote{In the spatial case, the corresponding threshold on the exponent is $\frac 4 {\gamma^2}$}, there is a constant $C_{\gamma,N}<\infty$, such that 
\[
\int_{\Lambda_0^N}  \prod_{1 \leq i < j \leq N} \frac 1 {\| w_i - w_j\|_\s^{\gamma^2}} < C_{\gamma,N}\,.
\]
\end{lemma}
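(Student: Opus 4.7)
My plan is to establish this Selberg-type bound by induction on $N$, with a dyadic decomposition of the configuration space as the main technical device. The base case $N=1$ is trivial since $\Lambda_0$ has finite parabolic volume. For the inductive step, I decompose $\Lambda_0^N = \bigsqcup_{k \geq 0}\Omega_k$ according to the dyadic scale of the minimum pairwise distance $\delta_{\min}(\vec w) := \min_{i<j}\|w_i - w_j\|_\s \in [2^{-k-1}, 2^{-k}]$. By the symmetry of the integrand, it suffices, up to a combinatorial factor $\binom{N}{2}$, to treat the subregion of $\Omega_k$ where the minimum is realised by the pair $(w_1, w_2)$. On this subregion I integrate $w_2$ out over the parabolic annulus $\|w_2 - w_1\|_\s \asymp 2^{-k}$: the $(1,2)$-singularity contributes $2^{k\gamma^2}$, the annulus (parabolic) volume contributes $2^{-4k}$, and the remaining factors $\|w_2 - w_j\|_\s^{-\gamma^2}$ can be controlled by $\|w_1 - w_j\|_\s^{-\gamma^2}$ up to multiplicative constants, via the triangle inequality combined with the minimality $\|w_1 - w_j\|_\s \geq 2^{-k-1}$.

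The main obstacle will be to track the combinatorics across the inductive step without losing a factor. A direct absorption of $w_2$ into $w_1$ reduces matters to an $(N-1)$-point Selberg integral with a doubled singularity at $w_1$, which is finite only when $\gamma^2 < 8/(N+1)$, one unit less than the sharp threshold $8/N$. To reach the sharp bound $N < 8/\gamma^2$, the dyadic reduction must be iterated \emph{hierarchically}: one encodes each configuration by a rooted tree of nested single-linkage clusters with merging scales $(2^{-k_v})_v$, and the integral factorises over the internal nodes $v$ of such a tree as a product of geometric series in $k_v$, each contributing a gain of order $2^{-k_v[4(n_v - 1) - \gamma^2 \binom{n_v}{2}]}$, where $n_v$ is the number of leaves below $v$. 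These series are individually summable iff $\gamma^2 < 8/n_v$, and since $n_v$ takes integer values up to $N$, the global bound holds exactly under the assumption $N < 8/\gamma^2$.

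As a cleaner alternative, one may bypass the combinatorial bookkeeping altogether by invoking Kahane's convexity inequality (Lemma \ref{l.K}) to compare the log-kernel $-\log \|\cdot\|_\s$ with an exactly $\ast$-scale-invariant cascade kernel on a rooted tree of dyadic parabolic sub-boxes. For such a cascade the $N$-point integral is computable via a Galton-Watson recursion and is finite up to the same critical threshold $N < 8/\gamma^2$. This is precisely the route taken in the main body of the paper through Proposition \ref{pr.moments}; the reason the appendix sticks to the direct Coulomb-gas computation, limited to integer moments, is only to make transparent the structural parallel (and the contrast, due to the absence of the oscillatory cancellations that are present in Sine-Gordon) with the analysis of \cite{sine}.
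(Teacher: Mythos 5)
Your argument is correct, but it follows a genuinely different route from the paper's. The paper also proceeds by induction on $N$, but instead of a cluster expansion it introduces the regularised quantity $I_{\eps,L,N}$ (minimal pairwise separation $\geq\eps$, all points within $\|\cdot\|_\s$-distance $L$ of $w_1=0$) and exploits the \emph{exact} parabolic scaling of both the kernel and the domain: rescaling by $1/2$ gives $I_{\eps/2,L/2,N}=2^{-4(N-1)+\gamma^2\binom{N}{2}}I_{\eps,L,N}\leq(1-\delta)I_{\eps,L,N}$ precisely when $\gamma^2<8/N$, while the leftover event $\sup_i\|w_i\|_\s\in(L/2,L]$ forces a splitting into two clouds separated by $\geq L/(2N)$, each controlled by the inductive hypothesis; this yields $I_{\eps/2,L,N}\leq(1-\delta)I_{\eps,L,N}+M$ and hence the uniform bound. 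Your single-linkage tree decomposition reaches the same sharp threshold because, as you say, the exponent that must be positive at a node with $n_v$ leaves is the \emph{cumulative} (telescoped) one $4(n_v-1)-\gamma^2\binom{n_v}{2}$, binding at $n_v=N$; your preliminary computation that the naive one-step absorption only reaches $\gamma^2<8/(N+1)$ is also exact. What each approach buys: the paper's contraction argument is shorter and sidesteps all cluster combinatorics, but it leans on the exact scale invariance of $\|\cdot\|_\s$ and of the pure power-law kernel; your hierarchical expansion is more robust (it would survive a kernel that is only comparable to a power law) and makes transparent that the constraint is $\gamma^2<8/n$ for \emph{every} cluster size $n\leq N$, at the price of bookkeeping (factorisation over internal nodes, ordering constraints $k_v\leq k_c$, telescoping of exponents when summing leaf-to-root, combinatorial factors depending on $N$) that you have only sketched and would need to write out. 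One caveat on your closing aside: in this parabolic setting the paper explicitly notes that no exactly $\ast$-scale-invariant log-kernel is known, so the main-body route (Proposition \ref{pr.moments}) is \emph{not} a comparison with an exact dyadic cascade but a Kahane comparison with i.i.d.\ rescaled copies of $\Phi_\eps$ on well-separated tiles plus an independent global Gaussian; a naive cascade comparison fails near box boundaries, so that alternative is less innocuous than you suggest, though this does not affect your main argument.
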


\begin{proof}
Let us first prove this Lemma for the unit  ball $\Lambda_0$ for the parabolic distance $\|\cdot\|_\s$ on $\R\times \R^2$, we will briefly discuss below how it adapts on $\S^2$. We proceed by induction on the integer $N\geq 1$. It is straightforward to check that the estimate holds when $N=1 < \frac 8 {\gamma^2}$.  Suppose now $2 \leq N < \frac 8 {\gamma^2}$ and assume the Lemma holds for $N-1$. 

Let us introduce the following quantity which will have nice scaling properties: for each $0<\eps<L$, define 
\begin{align*}\label{}
I_{\eps,L,N}:= \int_{
\substack{
w_2,...,w_N \in (\R^{d+1})^{N-1} \\
\inf_{i \neq j} \|w_i - w_j\|_\s \geq \eps \\
 \sup_i(\|w_i\|_\s) \leq L
}
}   
 \prod_{1 \leq i < j \leq N} \frac 1 {\| w_i - w_j\|_\s^{\gamma^2}} dw_2 \ldots dw_N\,,
\end{align*}
where $w_1$ denotes here the origin. 

Let us  analyze the behaviour of $I_{\eps,L,N}$ as $\eps \searrow 0$. For this, note that 
\begin{align}\label{e.split}
I_{\frac \eps 2,L,N}
& = I_{\frac \eps 2,\frac L 2,N} +  \int_{
\substack{
w_2,...,w_N \in (\R^{d+1})^{N-1} \\
\inf_{i \neq j} \|w_i - w_j\|_\s \geq \eps/2 \\
 \sup_i(\|w_i\|_\s) \in (L/2, L]
}
}   
 \prod_{1 \leq i < j \leq N} \frac 1 {\| w_i - w_j\|_\s^{\gamma^2}} dw_2 \ldots dw_N
\end{align}
The first term on the R.H.S is equal by scaling to $(1/2)^{4(N-1)} 2^{\gamma^2 \binom{N}{2}} I_{\eps,L,N}$. By our assmption on $N,\gamma$, we can find $\delta>0$ so that independently of $\eps, L$, 
\[
I_{\frac \eps 2,\frac L 2,N} \leq (1-\delta) I_{\eps,L,N}\,.
\]
This contraction type of bound will allow us to conclude. 
For the second term on the R.H.S of ~\eqref{e.split}, notice that after a possible reordering of points,  one can find $k\in \{1,\ldots, N-1\}$ s.t. the cloud of points $\{w_1=0,\ldots, w_k\}$ is at distance at least $L/(2N)$ from the cloud of points $\{w_{k+1}, \ldots, w_N\}$. 
The first cloud of points contributes at most $I_{\frac \eps 2,L,k}$ while the second contributes at most $I_{\frac \eps 2,2L,N-k}$. Both of these bounds are easily seen to 
be uniformly bounded from above by our recursion hypothesis. Now, the interaction between the two clouds is less than $(2N/L)^{\gamma^2 k (N-k)}$ and the combinatorial term coming from the reordering it at most $N!$.   
Recollecting, we find a constant $M=M_{\gamma, L,N} <\infty$ such that for all $\eps<L$, 
\begin{align*}\label{}
I_{\frac \eps 2,L,N} \leq (1-\delta) I_{\eps ,L,N} + M_{\gamma, L,N}
\end{align*}
which shows that $\limsup_{\eps \to 0} I_{ \eps ,L,N}<\infty$. 
By writing $\int_{\Lambda_0^N}  \prod_{1 \leq i < j \leq N} \frac 1 {(\| w_i - w_j\|_\s + \frac \eps \lambda)^{\gamma^2}} dw$ as an integral over $w_1$ of $N-1$-dimensional integrals and using a similar decomposition as above, we conclude the proof of the Lemma. 
For the case of $\R\times \R^2$, it is enough to use a cut-off $L$ large enough (for example $L=1$). In the case of the sphere $\S^2$,  the scaling argument does not have the same Jacobian everywhere. In that case it is of interest to use a cut-off $L=L(\gamma,N)$ sufficiently small so that one has $I_{\frac \eps 2,\frac L 2,N} \leq (1-\delta) I_{\eps,L,N}$ by approximate scaling. Then to get back to the integral in the Lemma, one can decompose as in ~\eqref{e.split}.
\end{proof}

\let\oldaddcontentsline\addcontentsline
\renewcommand{\addcontentsline}[3]{}%

\bibliographystyle{alpha}

\end{document}